%
%
%


\documentclass{amsart}

\usepackage{amssymb}
\usepackage{url}




\newtheorem{thm}{Theorem}[section]
\newtheorem{lem}[thm]{Lemma}
\newtheorem{cor}[thm]{Corollary}
\newtheorem{prop}[thm]{Proposition}

\theoremstyle{definition}
\newtheorem{defn}[thm]{Definition}
\newtheorem{example}[thm]{Example}

\theoremstyle{remark}

\newcommand{\BSD}{\mathrm{BSD}}
\newcommand{\Z}{\mathbb{Z}}

\newcommand{\Q}{\mathbb{Q}}
\newcommand{\F}{\mathbb{F}}
\newcommand{\R}{\mathbb{R}}
\newcommand{\C}{\mathbb{C}}
\renewcommand{\O}{\mathcal{O}}
\newcommand{\p}{\mathfrak{p}}
\newcommand{\ran}{r_\mathrm{an}}
\newcommand{\GL}{\mathrm{GL}}
\newcommand{\ord}{\mathrm{ord}}
\newcommand{\Sel}{\mathrm{Sel}}
\newcommand{\rank}{\mathrm{rank}}
\newcommand{\Reg}{\mathrm{Reg}}
\newcommand{\Tr}{\mathrm{Tr}}
\newcommand{\End}{\mathrm{End}}
\newcommand{\Aut}{\mathrm{Aut}}
\newcommand{\tors}{\mathrm{tors}}
\newcommand{\Gal}{\mathrm{Gal}}

\DeclareFontEncoding{OT2}{}{} 
  \newcommand{\textcyr}[1]{%
    {\fontencoding{OT2}\fontfamily{wncyr}\fontseries{m}\fontshape{n}%
     \selectfont #1}}
\newcommand{\Sha}{{\mbox{\textcyr{Sh}}}}

\numberwithin{equation}{section}

\begin{document}

\title[Proving BSD for specific curves of rank 0 and 1]{Proving the Birch and Swinnerton-Dyer \\ conjecture for specific elliptic curves \\ of analytic rank zero and one}
\author[R.L. Miller]{Robert L. Miller}
\address{Mathematics Institute, University of Warwick \\ Coventry CV4 7AL UK}
\thanks{The author was supported in part by NSF DMS Grants \#0354131, \#0757627, \#61-5655, \#61-5801 and \#61-7586.}
\subjclass[2010]{Primary 11G40, 14G10; Secondary 11-04, 11Y16}
\date{} 


\begin{abstract}
We describe an algorithm to prove the Birch and Swinnerton-Dyer conjectural formula for any given elliptic curve defined over the rational numbers of analytic rank zero or one. With computer assistance we have proved the formula for 16714 of the 16725 such curves of conductor less than 5000.
\end{abstract}

\maketitle

\section{Introduction}
Let $E$ be an elliptic curve defined over $\Q$, given by a global minimal Weierstrass equation. We denote the identity of $E$ by $\O$, the rank of the Mordell-Weil group $E(\Q)$ by $r$ and the conductor of $E$ by $N$. For each prime $p$, let $c_p(E)$ be the Tamagawa number at $p$ and let $a_p = p + 1 - \#\tilde E(\F_p)$ where $\tilde E(\F_p)$ is the mod-$p$ reduction of $E$. Let $L(E/\Q,s)$ be the Hasse-Weil $L$-function of $E$, and denote its order of vanishing at $s=1$ by $\ran(E/\Q)$. The regulator of $E(\Q)$ is denoted $\Reg(E(\Q))$. With $\omega$ denoting the minimal invariant differential let $\Omega(E) = \int_{E(\R)}|\omega|$ be the real period (the least positive real element of the canonical period lattice $\Lambda$) times the order of the component group of $E(\R)$ and let $||\omega||^2 = \int_{E(\C)}\omega\wedge\overline{i\omega}$ be twice the area of the fundamental domain of $\Lambda$. Denote the Shafarevich-Tate group by $\Sha(\Q, E)$ and for $G$ a group let $G_\tors$ denote its torsion subgroup and let $G_{/\tors}$ denote the quotient group $G/G_\tors$.

The Birch and Swinnerton-Dyer conjecture states that:
\begin{enumerate}
 \item The rank $r$ is equal to the analytic rank $\ran(E/\Q)$.
 \item The Shafarevich-Tate group is finite.
 \item The leading coefficient of the Taylor series of $L(E/\Q,s)$ at $s = 1$ is given by the formula:
\[
\frac{L^{(r)}(E/\Q,1)}{r!} = \frac{\Omega(E) \cdot \prod_p c_p(E) \cdot \Reg(E(\Q)) \cdot \#\Sha(\Q,E)}{\#E(\Q)_\tors^2}\,.
\]
\end{enumerate}

The first part of the conjecture, known as the rank conjecture, is one of the Clay Mathematics Institute's Millenium Prize Problems \cite{BSD_Clay}. It is known that if $\ran(E/\Q) \leq 1$ then the rank conjecture holds and the Shafarevich-Tate group is finite. It is worthy to note that $\#\Sha(\Q,E)_\mathrm{an}$, the value of $\#\Sha(\Q,E)$ for which the conjectural formula holds, is not even known to be a rational number for a single curve such that $\ran(E/\Q) > 1$. In this note we describe an algorithm which computes the order of the Shafarevich-Tate group of any elliptic curve $E$ such that $\ran(E/\Q) \leq 1$. This either proves the full conjecture for $E$ or produces a counterexample. We also report the results of computer calculations which prove the conjecture for 16714 of the 16725 such curves of conductor less than 5000.

\begin{defn} We denote by $\BSD(E/\Q,p)$ the following assertions:
\begin{enumerate}
 \item The rank $r$ is equal to the analytic rank $\ran(E/\Q)$.
 \item The $p$-primary part $\Sha(\Q,E)(p)$ of the Shafarevich-Tate group is finite.
 \item The real number $\#\Sha(\Q,E)_\mathrm{an}$ is rational.
 \item The conjectural formula holds at $p$, i.e.,
\[
\ord_p(\#\Sha(\Q,E)_\mathrm{an}) = \ord_p(\#\Sha(\Q,E)(p))\,.
\]
\end{enumerate}
We also denote $\BSD(E, p) = \BSD(E/\Q, p)$, and note that there is a definition of $\BSD(E/K, p)$ for global fields $K$ in general-- see, e.g., \cite{my-phd} for a definition.
\end{defn}
If $\ran(E/\Q) \leq 1$ then all but the last part of $\BSD(E/\Q,p)$ is known, so in this case $\BSD(E/\Q, p)$ is equivalent to the last equality. Clearly the Birch and Swinnerton-Dyer conjecture holds if and only if for each prime $p$, $\BSD(E/\Q,p)$ is true. The rank conjecture has been verified for $E/\Q$ of conductor $N < 130,000$ \cite{crem-130K}.

By the modularity theorem \cite{modularity1, modularity2}, every elliptic curve $E$ defined over $\Q$ has a modular parametrization $\psi : X_0(N) \rightarrow E$. If for each isogenous curve $E^\prime$ with modular parametrization $\psi^\prime : X_0(N) \rightarrow E^\prime$ we have that $\psi^\prime = \varphi \circ \psi$ for some isogeny $\varphi$ then we say that $E$ is an optimal elliptic curve, often called a strong Weil curve in the literature. Every elliptic curve over $\Q$ has an optimal elliptic curve in its isogeny class and by the characterizing property this curve is unique. Thus we can use optimal curves as isogeny class representatives and, by isogeny invariance of $\BSD(E,p)$ \cite{cassels-65}, focus on optimal curves.

\begin{thm}\label{thm_one}
Suppose $E/\Q$ is an elliptic curve of (analytic) rank at most 1 and conductor $N < 5000$. If $p$ is a prime such that $E[p]$ is irreducible then $\BSD(E, p)$ holds. If $E[p]$ is reducible and the pair $(E,p)$ is not one of the 11 pairs appearing in Table \ref{isogenies_left}, then $\BSD(E,p)$ holds.
\end{thm}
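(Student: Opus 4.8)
The plan is to reduce the theorem, for each curve $E$ and each prime $p$, to the single equality $\ord_p(\#\Sha(\Q,E)_\mathrm{an}) = \ord_p(\#\Sha(\Q,E)(p))$, since for $\ran(E/\Q)\le 1$ the first three parts of $\BSD(E,p)$ are already known. First I would compute $\#\Sha(\Q,E)_\mathrm{an}$ as an exact positive rational number: by Kolyvagin's theorem and the Gross--Zagier formula the leading $L$-coefficient, the real period $\Omega(E)$, the regulator $\Reg(E(\Q))$, the Tamagawa numbers $c_p(E)$ and the torsion order are all provably computable, so the conjectural formula pins down $\#\Sha(\Q,E)_\mathrm{an}\in\Q$ to any precision and hence exactly. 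It then suffices to verify the valuation identity for the finite set $S$ of primes consisting of those dividing $\#\Sha(\Q,E)_\mathrm{an}$, the torsion order, or the Tamagawa product, together with the primes of additive or supersingular reduction and the finitely many primes at which $E[p]$ is reducible; for every prime outside $S$ the Euler-system upper bound discussed below gives $\#\Sha(\Q,E)(p)=0$, matching $\ord_p(\#\Sha(\Q,E)_\mathrm{an})=0$. This reduces an assertion over all $p$ to a finite computation for each of the $16725$ optimal curves.

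For $p\in S$ with $E[p]$ irreducible, the strategy is to trap $\ord_p(\#\Sha(\Q,E)(p))$ between matching bounds. The Euler system of Kato (for good ordinary and multiplicative $p$), and the Euler system of Heegner points via Kolyvagin when $\ran(E/\Q)=1$, furnish the upper bound $\ord_p(\#\Sha(\Q,E)(p)) \le \ord_p(\#\Sha(\Q,E)_\mathrm{an})$, i.e.\ one divisibility in the Iwasawa main conjecture together with its control theorem. For the reverse inequality I would invoke the complementary divisibility of Skinner--Urban wherever its hypotheses are met, so that the two bounds coincide and force equality; the remaining irreducible cases---supersingular $p$, the small primes $p=2,3$, and any curve to which the main-conjecture input does not apply---I would settle by an explicit $p$-descent that computes $\#\Sha(\Q,E)(p)$ directly and compares it with the already-determined $\#\Sha(\Q,E)_\mathrm{an}$.

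For $p\in S$ with $E[p]$ reducible the mod-$p$ representation is not surjective, so the Euler-system bounds degenerate and a different toolkit is needed. Here I would combine the analysis of the main conjecture for reducible representations (following Greenberg--Vatsal and the behaviour of the $p$-adic $L$-function across the isogeny class), visibility of $\Sha$ in an abelian variety of higher dimension to produce the required lower bound, and explicit descent along the $p$-isogeny. Run in tandem, these determine $\ord_p(\#\Sha(\Q,E)(p))$ for all but a short list of pairs.

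The main obstacle is precisely this reducible case at the smallest primes: for a few pairs $(E,p)$ the available descent leaves a gap between the upper and lower bounds for $\#\Sha(\Q,E)(p)$, and neither the reducible main conjecture nor a visibility construction closes it. These are exactly the $11$ pairs recorded in Table~\ref{isogenies_left}, which must be excluded; for every other pair the sandwich argument or the direct descent yields the equality of valuations, establishing $\BSD(E,p)$ and hence the theorem.
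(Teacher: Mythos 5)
Your overall architecture (reduce to the valuation identity, compute $\#\Sha(\Q,E)_\mathrm{an}$ exactly, shrink to a finite set of primes, sandwich $\ord_p(\#\Sha)$ between bounds, exclude the 11 reducible pairs) matches the paper's, but two of your load-bearing steps would fail as stated. First, the Euler systems do not furnish the upper bound $\ord_p(\#\Sha(\Q,E)(p)) \le \ord_p(\#\Sha(\Q,E)_\mathrm{an})$ in the rank-one case: Kolyvagin's theorem (Theorem \ref{big-result}) gives $\ord_p(\#\Sha(\Q,E)) \le 2\,\ord_p(I_K)$ for a computed Heegner index $I_K$, and this can strictly exceed $\ord_p(\#\Sha(\Q,E)_\mathrm{an})$ when $p$ divides Tamagawa numbers or the index itself. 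Consequently your finite set $S$ is wrong: it must also contain the primes dividing the Kolyvagin bound $c_4 I_K^2$, and closing the resulting gaps is where most of the paper's actual work lies --- Jetchev's Tamagawa refinement (Theorem \ref{jetchev}), recomputing $I_D$ for alternate discriminants (Table \ref{heeg_disc_rank_1_5_and_7}), the Stein--Wuthrich $p$-adic algorithm, and one bespoke $p$-adic $L$-function argument for $(\text{1155k},7)$ (Proposition \ref{1155k}), which none of your listed tools covers. Moreover, applying Kolyvagin at all for the CM curves requires the explicit mod-$p$ surjectivity verifications of Examples \ref{CM_rk1_5}--\ref{CM_rk1_11}, which your proposal does not provide.

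Second, your mechanism for the lower bound is both unavailable and unnecessary. Skinner--Urban's divisibility carries hypotheses (non-CM, $p$ ordinary and odd, irreducibility plus a ramification condition) that exclude precisely the problem cases here --- CM curves, additive primes, $p=2,3$ --- and in rank one the complementary divisibility does not by itself yield the $p$-part of the BSD formula. The paper needs lower bounds only for the handful of curves with $\ord_p(\#\Sha_\mathrm{an})>0$ (Tables \ref{nontriv_3sha} and \ref{nontriv_5sha}, plus 3364c1), and obtains them from visibility (Cremona--Mazur) and Grigorov's thesis, not from a main conjecture. Your fallback of ``an explicit $p$-descent that computes $\#\Sha(\Q,E)(p)$ directly'' is also not viable: as the paper stresses in Example \ref{CM_rk1_5descent}, full $5$-descent is generally infeasible (degree-$24$ number fields, class groups only under GRH) and succeeded in exactly two special cases, while $7$-descent is out of reach --- indeed the 11 pairs of Table \ref{isogenies_left} remain open precisely because such descents could not be carried out. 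So the residual irreducible cases you defer to descent, and the reducible cases you defer to Greenberg--Vatsal and visibility (the paper instead uses isogeny descents of Fisher and Stoll--Miller, plus Theorem \ref{stein-et-al} with Heegner indexes), are genuine gaps rather than routine verifications.
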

Note that this gives the full Birch and Swinnerton-Dyer conjecture for 16714 curves of the 16725 of rank at most one and conductor at most 5000. The remaining cases will be treated by a forthcoming paper by Michael Stoll and the author-- see Section \ref{reducible} for more details. This note was inspired by \cite{bsdalg}.

Whenever we prove a theorem with the help of a computer questions regarding errors both in hardware and software arise. Any computer-assisted proof implicitly includes as a hypothesis the statement that the software used did not encounter any bugs (hardware or software errors) during execution. Few software programs for serious number theory research have been proven correct. However it is often noted in the literature, as it is in Birch and Swinnerton-Dyer's seminal note \cite{bsd_notes_i} itself, that the kind of algorithms which occur in number theory (and more importantly the errors computational number theorists are likely to make implementing them) are often of a very particular sort. Either the software will work correctly or very quickly fail in an obvious way---perhaps it will crash or give answers that make no sense at all. In fact the computational work behind the theorems of Section \ref{proofs} uncovered several bugs (which have all been fixed). There are sometimes different implementations of the same algorithm or even different algorithms which implement the same theory. For example, the author used four different implementations of 2-descent to verify the computational claims of Theorem \ref{p_2}.

Throughout, $E$ will denote an elliptic curve defined over $\Q$, and we will be mainly interested in curves of rank 0 and 1. For such a curve, the Birch and Swinnerton-Dyer conjectural formula is known to hold up to a rational number, and sections 2 through 4 explain this result in such a way as to make it explicit. Sections 5 and 6 discuss what to do with the remaining primes and Section 7 contains the proof of Theorem \ref{thm_one}. Section 8 discusses the remaining cases, which all have reducible mod-$p$ representations.

The author wishes to thank John Cremona, Tom Fisher, Ralph Greenberg, Dimitar Jetchev, William Stein, Michael Stoll and Christian Wuthrich for their helpful comments and encouragement.

\section{Quadratic twists}\label{qit}
Below we will need to use several properties of the quadratic twist $E^d$ of the elliptic curve $E$ by a squarefree integer $d \not\in \{0, 1\}$, so we establish these here. For any number field $F$ let $G_F$ denote its absolute Galois group. Suppose $E$ is an elliptic curve over $\Q$ given in standardized ($a_1,a_3 \in \{0,1\}$ and $a_2 \in \{-1,0,1\}$) global minimal Weierstrass form
$$
E : y^2 + a_1 x y + a_3 y = x^3 + a_2x^2 + a_4 x + a_6\,.
$$
The curve $E^d$ can then be presented in the following Weierstrass form, which is not necessarily minimal:
\begin{align*}
E^d : y^2 &+ a_1xy + a_3y = x^3 \\&+ \left(a_2d + a_1^2(d-1)/4\right)x^2 \\&+ \left(a_4d^2 +a_1a_3(d^2-1)/2\right)x \\&+ a_6 d^3 + a_3^2(d^3 - 1)/4\,.
\end{align*}
Put $K = \Q(\theta)$ where $\theta^2 = 1/d$ and note that the curves are related by the $K$-isomorphism:
$$
\varphi : E \rightarrow E^d : \varphi(x,y) = \left(\theta^{-2}x, \theta^{-3}\left(y - \frac{a_1(\theta-1)}{2}x - \frac{a_3(\theta^3-1)}{2}\right) \right)\,.
$$
The $L$-series of $E/K$, $E/\Q$ and $E^d/\Q$ are related by the formula:
$$
L(E/K,s) = L(E/\Q,s)\cdot L(E^d/\Q,s)\,.
$$
Define as usual
\begin{align*}
 b_2 &= a_1^2 + 4a_2\,,  & c_4 &= b_2^2 - 24b_4\,,\\
 b_4 &= 2a_4 + a_1a_3\,, & c_6 &= b_2^3 + 36b_2b_4 - 216b_6\,,\\
 b_6 &= a_3^3 + 4a_6\,,  & \Delta &= (c_4^3 - c_6^2)/1728\,,
\end{align*}
noting that $\Delta$ is the minimal discriminant of $E$, hence $\omega = \text{d}x/(2y + a_1 x + a_3)$ is the minimal invariant differential of $E$. Let $\Delta'$ be the minimal discriminant of $E^d$, let $\text{sig}(E) = (\ord_2(c_4),\ord_2(c_6),\ord_2(\Delta))$ and for each prime $p$ let
$$\lambda_p = \min\{3\ord_p(c_4),2\ord_p(c_6),\ord_p(\Delta)\}\,.$$
The following proposition is a correction of \cite[Prop. 5.7.3]{connell_ech}:
\begin{prop}
 For each prime $p\mid 2d$ define $\delta_p$ as follows:
\begin{enumerate}
 \item If $p$ is odd, then define $\delta_p = 1$ if either $\lambda_p < 6$ or if $p = 3$ and $\ord_p(c_6) = 5$. Otherwise define $\delta_p = -1$.
 \item If $d \equiv 1 \pmod 4$ then $\delta_2 = 0$.
 \item If $d \equiv 3 \pmod 4$ then
 \begin{itemize}
  \item $\delta_2 = 2$ if $\text{sig}(E) = (0,0,\cdot)$ or $(\cdot,3,0)$,
  \item $\delta_2 = -2$ if $\text{sig}(E) = (4,6,c)$ with $c\geq 12$ and $2^{-6}c_6d \equiv -1 \pmod 4$, or if $\text{sig}(E) = (a,9,12)$ with $a \geq 8$ and $2^{-9}c_6d \equiv 1 \pmod 4$ and
  \item $\delta_2 = 0$ otherwise.
 \end{itemize}
 \item If $d \equiv 2 \pmod 4$ then
 \begin{itemize}
  \item $\delta_2 = 3$ if $\text{sig}(E) = (0,0,\cdot)$,
  \item $\delta_2 = -3$ if $\text{sig}(E) = (6,9,c)$ with $c \geq 18$ and $2^{-10}c_6d\equiv -1 \pmod 4$,
  \item $\delta_2 = 1$ if $\ord_2(c_4) \in \{4,5\}$, or if $\ord_2(c_6) \in \{3,5,7\}$, or if $\text{sig}(E) = (a,6,6)$ with $a \geq 6$ and $2^{-7}c_6d \equiv -1 \pmod 4$ and
  \item $\delta_2 = -1$ otherwise.
 \end{itemize}
\end{enumerate}
Then
$$\Delta' = \Delta\delta^6\qquad\text{where}\qquad\delta = \delta(E,d) = \prod_{p\mid 2d}p^{\delta_p}\,.$$
\end{prop}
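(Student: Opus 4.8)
The plan is to reduce the global identity $\Delta' = \Delta\delta^6$ to a local computation at each prime $p \mid 2d$ and then to read off the local minimization exponent from the standard theory of minimal Weierstrass equations. First I would record the effect of the twist on the $c$-invariants: the exhibited $K$-isomorphism $\varphi$ has scaling factor $u = \theta$ with $\theta^{-2} = d$, so the given (a priori non-minimal) model of $E^d$ has $c_4(E^d) = d^2 c_4$, $c_6(E^d) = d^3 c_6$ and discriminant $d^6\Delta$; one can confirm this directly from the displayed equation by computing its $b_i$ and $c_i$. Because $d$ is squarefree, at each prime $p\nmid 2d$ these invariants agree with those of the minimal model of $E$ up to a $p$-adic unit, so the given model is already minimal there and contributes nothing. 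Writing $u_p = p^{m_p}$ for the (possibly negative) scaling carrying the given model to the minimal model at $p$, we have $\ord_p(\Delta') = \ord_p(d^6\Delta) - 12m_p$, and the claim reduces to the local assertion $\delta_p = \mathbf 1_{p\mid d} - 2m_p$ for every $p\mid 2d$, where $\mathbf 1_{p\mid d}$ is $1$ if $p\mid d$ and $0$ otherwise.

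For odd $p$ the twist equation is integral at $p$ (its fractional coefficients have $2$-power denominators), so $m_p\geq 0$. When $p\neq 3$ the residue characteristic is prime to $6$ and the classical criterion applies: the twisted model has $\lambda_p$ increased by $6$, so it is non-minimal precisely when $\lambda_p\geq 6$, and then a single reduction $m_p = 1$ suffices since $E$ is minimal; this gives $\delta_p = 1$ for $\lambda_p < 6$ and $\delta_p = -1$ for $\lambda_p\geq 6$. The anomaly at $p = 3$ is explained by Kraus's integrality condition $\ord_3(c_6)\neq 2$: if $\ord_3(c_6) = 5$ then $\ord_3(d^3c_6) = 8$, and reducing by $u = 3$ would send this to the forbidden valuation $2$, so no reduction is possible and $\delta_3 = 1$ even though $\lambda_3\geq 6$.

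The prime $p = 2$ is the crux and the location of the correction to \cite[Prop. 5.7.3]{connell_ech}. If $d\equiv 1\pmod 4$ the twist equation is integral and minimal at $2$, so $m_2 = 0$ and $\delta_2 = 0$; if $d\equiv 2,3\pmod 4$ it is not even integral at $2$, so $m_2$ may be negative and $\delta_2$ positive. To determine $m_2$ I would apply the conditions of Kraus at $2$, which describe exactly which pairs $(c_4,c_6)$ come from an integral model over $\Z$ (localized at $2$) via congruences modulo small powers of $2$; applied to $(d^2c_4, d^3c_6)$ they pick out the extremal $u = 2^{m_2}$ for which the scaled invariants are integral and minimal. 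The answer depends only on $\text{sig}(E)$ and on a congruence of $d^3c_6$ modulo a power of $2$, and in each relevant subcase the $2$-adic valuations involved make $2^{-k}d^3c_6 \equiv 2^{-k}dc_6\pmod 4$ (using $d^3\equiv d\pmod 8$ for odd $d$), which is why the tabulated conditions may be phrased through $2^{-k}c_6d$. Carrying this out over the residue classes of $d\pmod 4$ and the possible signatures yields the listed values of $\delta_2$.

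The hard part is this final step. In residue characteristic $2$ one cannot use the clean $\lambda_p$ test and must instead handle the delicate existence-and-minimality conditions for integral models, splitting into many subcases by signature and by congruences modulo $4$. This is exactly where the cited proposition goes wrong, so the boundary subcases---those with $\ord_2(c_4)$ or $\ord_2(c_6)$ at the thresholds appearing in the statement---deserve independent verification, for instance by running Tate's algorithm directly and comparing the resulting minimal discriminant with $\Delta\delta^6$.
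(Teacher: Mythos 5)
First, note that the paper offers no proof of this proposition at all: it is stated bare, as ``a correction of \cite[Prop.\ 5.7.3]{connell_ech},'' and the text moves straight on to its consequences. So there is no internal argument to compare yours against, and your outline in fact supplies more than the paper does. Your framework is surely the intended one: the twisted invariants $c_4(E^d)=d^2c_4$, $c_6(E^d)=d^3c_6$, $\Delta_d=d^6\Delta$; the reduction to local scaling exponents via $\delta_p=\mathbf{1}_{p\mid d}-2m_p$ where $p^{m_p}$ is the local scaling to the minimal model; and the Kraus--Laska--Connell principle that $m_p$ is the largest exponent for which $(u^{-4}c_4^{(d)},u^{-6}c_6^{(d)})$ remains admissible. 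Your odd-$p$ analysis is complete and correct, including the right explanation of the $p=3$, $\ord_3(c_6)=5$ anomaly: there $\lambda_3\geq 6$, but scaling by $u=3$ would land on the Kraus-forbidden valuation $\ord_3(c_6)=2$, so $\delta_3=1$ nonetheless.

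The gap is at $p=2$, and it is not a small one, because the $2$-adic table is the entire content of the correction being asserted. You name the right tool (Kraus's conditions at $2$) and the right bookkeeping (for odd $d$, $d^3\equiv d\pmod 8$, so congruences on $2^{-k}d^3c_6$ become congruences on $2^{-k}c_6d$; for even $d$ the same applies to the odd part $d/2$), but then assert that ``carrying this out over the residue classes of $d\pmod 4$ and the possible signatures yields the listed values,'' explicitly deferring the boundary subcases. That finite case analysis \emph{is} the proposition: for each signature class one must verify both that the extremal pair $\left(2^{-4m_2}d^2c_4,\,2^{-6m_2}d^3c_6\right)$ satisfies the $2$-adic admissibility congruences and that one further scaling fails them --- e.g.\ the bullet $\delta_2=-2$ for $\mathrm{sig}(E)=(4,6,c)$, $c\geq 12$, $d\equiv 3\pmod 4$ is exactly Kraus's condition $c_6\equiv -1\pmod 4$ applied to the once-scaled $2^{-6}d^3c_6$, and each of the other bullets encodes a similar two-sided check. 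Relatedly, your claim that for $d\equiv 1\pmod 4$ the displayed model is ``integral and minimal at $2$'' is true but not automatic: minimality at $2$ is not determined by the valuations $(\ord_2 c_4,\ord_2 c_6,\ord_2\Delta)$ alone, so one must also observe that the Kraus congruences are invariant under $c_4\mapsto d^2c_4$, $c_6\mapsto d^3c_6$ when $d\equiv 1\pmod 4$. Until the $2$-adic subcases are actually written out (or verified via Tate's algorithm, as you suggest), the very part where Connell's published statement errs remains unproven; executing that finite check within your framework would, however, yield a complete proof --- something the paper itself never provides.
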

The invariant differential $\omega_d$ associated to the given Weierstrass equation for $E^d$ has pullback $\varphi^*\omega_d = \theta\omega$ by \cite[p. 49]{Silverman_AEC}, and it may not be minimal. In fact, if $\Delta_d$ is the discriminant of the above equation for $E^d$ then $\theta^{12}\Delta_d = \Delta$. Since $\Delta = \Delta'\delta^{-6}$ we have $(\delta/d)^6\Delta_d = \Delta'$. The transformation taking $E^d$ to its minimal model must be defined over $\Q$ so $|\delta/d|\in\Q$ must be a square (or one can just read this off from the above proposition) and if $\omega'$ is the minimal invariant differential of $E^d$ then $\pm|\delta/d|^{-1/2}\omega_d = \omega'$. Finally since $\varphi^*\omega' = \pm|\delta/d|^{-1/2}\theta\omega$ we find the relationship between the canonical period lattices of $E$ and $E^d$:
$$
\Lambda_d = |\delta/d|^{-1/2}\theta\Lambda\,.
$$

Let $\sigma$ denote the nontrivial element of $G = \Gal(K/\Q)$. Define an action of $G$ on $H^1(K,E)$ by setting $\xi^\sigma(\tau) = \xi(\sigma\tau\sigma^{-1})^{\sigma}$ for $\tau \in G_K$ and $\{\xi\}^\sigma = \{\xi^\sigma\}$. Let $E(K)^\pm, H^1(K,E)^\pm$ denote the $\pm1$-eigenspaces of $E(K), H^1(K,E)$, respectively. By the definition of $E^d$ (see \cite[X \S 2]{Silverman_AEC}) we have that $\varphi^\sigma = [-1]\circ\varphi$. Then for $P \in E(K)$ we have
$$
P^\sigma = \pm P \quad \Rightarrow \quad \varphi(P)^\sigma = \varphi^\sigma(P^\sigma) = \mp\varphi(P)\,,
$$
and for $\xi : G_K \rightarrow E$ representing a cocyle class $\{\xi\} \in H^1(K,E)$ we have
$$
\{\xi^\sigma \pm \xi\} = 0 \quad \Rightarrow \quad \{(\varphi\circ\xi)^\sigma \mp \varphi\circ\xi\} = \{[-1]\circ\varphi\circ(\xi^\sigma\pm\xi)\} = 0\,,
$$
which show that $\varphi$ exchanges $E(K)^+$ with $E(K)^-$ and $H^1(K,E)^+$ with $H^1(K,E)^-$.

The following lemma gives a relationship between the Mordell-Weil groups $E(\Q), E^d(\Q)$ and $E(K)$.
\begin{lem}\label{qit_mw}
We have $E(\Q) = E(K)^+$ and under $\varphi^{-1}$ we may identify $E^d(\Q) = E(K)^-$. Under this identification we have:
\begin{enumerate}
\item The intersection is two torsion:
$$E(\Q) \cap E^d(\Q) = E(\Q)[2]\,,$$
\item if $E(K)$ has rank $r$ and $E(K)[2]$ has rank $s$, then
$$[E(K)_{/\tors} : (E(\Q) + E^d(\Q))_{/\tors}] \leq 2^r$$
and
$$[E(K) : E(\Q) + E^d(\Q)] \leq 2^{r+s}\,;$$
\item if $E(K)$ has rank 1, $E(\Q)$ has rank 0 and $E(\Q)[2] = 0$, then
$$E(K)_{/\tors} = E^d(\Q)_{/\tors}.$$
\end{enumerate}
\end{lem}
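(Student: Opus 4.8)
The plan is to carry out everything inside $E(K)$, exploiting the eigenspace decomposition and the relation $\varphi^\sigma = [-1]\circ\varphi$ established above. First I would pin down the two identifications. Galois descent for the quadratic extension $K/\Q$ gives $E(\Q) = E(K)^{\Gal(K/\Q)} = E(K)^+$. For the minus part, one computes for $P \in E(K)$ that $\varphi(P)^\sigma = \varphi^\sigma(P^\sigma) = -\varphi(P^\sigma)$, so $\varphi(P)$ is $\Q$-rational precisely when $P^\sigma = -P$; as $\varphi$ is a $K$-isomorphism this yields $\varphi^{-1}(E^d(\Q)) = E(K)^-$, the promised identification. Part (1) is then immediate: if $P \in E(K)^+ \cap E(K)^-$ then $P^\sigma = P$ and $P^\sigma = -P$, so $2P = \O$ and $P \in E(\Q)$, giving $E(\Q)[2]$; conversely every $\Q$-rational $2$-torsion point lies in both eigenspaces.

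For part (2) the key device is the averaging identity: for any $P \in E(K)$,
\[
2P = (P + P^\sigma) + (P - P^\sigma),
\]
with $P + P^\sigma \in E(K)^+$ and $P - P^\sigma \in E(K)^-$, so $2E(K) \subseteq E(\Q) + E^d(\Q) =: H$. Writing $T = E(K)_\tors$ and applying the same identity to torsion points gives $2T \subseteq H \cap T = H_\tors$. I would then factor the index through the exact sequence
\[
0 \to T/H_\tors \to E(K)/H \to E(K)_{/\tors}/H_{/\tors} \to 0,
\]
after noting that $H_{/\tors}$ embeds in $E(K)_{/\tors} \cong \Z^r$. The inclusion $2E(K) \subseteq H$ forces $2\Z^r \subseteq H_{/\tors}$, bounding the right-hand term by $2^r$ and already proving the first inequality; and $2T \subseteq H_\tors$ makes the left-hand term $T/H_\tors$ a quotient of $T/2T \cong E(K)[2]$, of order $2^s$. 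Multiplying the two bounds gives $[E(K):H] \leq 2^{r+s}$.

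Part (3) is where the genuine work lies, and ruling out an index-$2$ discrepancy in the free quotient is the main obstacle. Under $r = 1$ and $\rank E(\Q) = 0$ the eigenspace $E(K)^+ = E(\Q)$ is finite, so $E(K)^- = E^d(\Q)$ has rank $1$ and $E(K)_{/\tors} \cong \Z$. Fixing a generator $g$ and a lift $P$, the averaging identity shows the image of $E(K)^-$ in $E(K)_{/\tors}$ contains $2g$, so a priori it could be $2\Z$ rather than all of $\Z$. To close this gap I would symmetrize $P$. Since $\sigma$ acts as $-1$ on the finite-index image of $E(K)^-$ (where $Q^\sigma = -Q$), it acts as $-1$ on all of $E(K)_{/\tors}$, so $t := P + P^\sigma$ maps to $g + g^\sigma = 0$ and hence lies in $E(\Q)_\tors$. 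Here the hypothesis $E(\Q)[2] = 0$ enters decisively: it forces $E(\Q)_\tors$ to have odd order, so $t = 2u$ for some $u \in E(K)^+$. Then $P' := P - u$ still maps to $g$ but satisfies $P' + P'^\sigma = t - 2u = \O$, so $P' \in E(K)^-$ maps to the generator $g$. Thus the image of $E^d(\Q) = E(K)^-$ is all of $E(K)_{/\tors}$, and since $E^d(\Q)_{/\tors}$ injects into $E(K)_{/\tors}$, the two coincide.
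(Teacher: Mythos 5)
Your proposal is correct and follows essentially the same route as the paper: the same eigenspace identifications, the same averaging identity $2P = (P+P^\sigma) + (P-P^\sigma)$ for the index bounds in (2), and for (3) the same correction of a lift of the generator by an element of $E(\Q)_\tors$, whose odd order (forced by $E(\Q)[2]=0$) lets you halve $t = P + P^\sigma$. Your halving step $t = 2u$ is exactly the paper's explicit choice $W = P + aT$ with $\ord(T) = 2^b(2a+1)$ in disguise, and your exact-sequence packaging of (2) and the $\sigma$-acts-as-$-1$ detour in (3) are only cosmetic variations on the paper's direct bounds $[E(K):2E(K)] = 2^{r+s}$ and the observation that $P + P^\sigma \in E(K)^+$ is torsion because $E(\Q)$ has rank $0$.
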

\begin{proof}The identifications are by definition and the above observations.
\begin{enumerate}
 \item Note that $P \in E(K)^+ \cap E(K)^-$ is equivalent to $P = P^\sigma = -P$.
 \item Let $P \in E(K)$ and note that 
$$
2P = (P + P^\sigma) + (P - P^\sigma) \in E(K)^+ + E(K)^-.
$$
Therefore since $2E(K) \subseteq E(K)^+ + E(K)^-$ we have that
$$
[E(K)_{/\tors} : (E(K)^+ + E(K)^-)_{/\tors}] \leq [E(K)_{/\tors} : 2E(K)_{/\tors}] = 2^r
$$
and
$$
[E(K) : E(K)^+ + E(K)^-] \leq [E(K) : 2E(K)] = 2^{r+s}.
$$
 \item Choose $P$ such that $E(K) = \Z P \oplus E(K)_\tors$. We have that $T := P^\sigma + P \in E(K)^+$ must be torsion, so choose $a, b$ so that the order of $T$ is $2^b(2a+1)$. With $W = P + aT$ we have that
$$
W^\sigma + W = P^\sigma + aT + (P + aT) = P^\sigma + P + 2aT = (2a+1)T
$$
must be in $E(\Q)(2)$ which is trivial since $E(\Q)[2] = 0$. Thus $W \in E(K)^-$ and since $W \equiv P$ modulo torsion we have $E(K)_{/\tors} = E(K)^-_{/\tors}$.
\end{enumerate}
\end{proof}

Recall the (exact) inflation-restriction sequence:
$$
0 \rightarrow H^1(K/\Q,E(K)) \stackrel{\text{inf}}{\rightarrow} H^1(\Q,E) \stackrel{\text{res}}{\rightarrow} H^1(K,E)\,.
$$
\begin{lem}\label{qit_sha}
 Up to a finite 2-group we may identify $H^1(\Q,E) = H^1(K,E)^+$ and $H^1(\Q,E^d) = H^1(K,E)^-$.
 \begin{enumerate}
  \item $H^1(K/\Q,E(K))$ is a finite 2-group.
  \item The image of $H^1(\Q,E)$ lies within $H^1(K,E)^+$.
  \item The quotient $H^1(K,E)/(H^1(K,E)^++H^1(K,E)^-)$ is a 2-group.
 \end{enumerate}
 Thus if $\Sha(K,E)$ is finite then up to a factor of 2,
$$\#\Sha(K,E) = \#\Sha(\Q,E) \cdot \#\Sha(\Q,E^d)\,.$$
\end{lem}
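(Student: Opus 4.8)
The plan is to prove the three numbered assertions and then assemble them, via the inflation-restriction sequence and the $K$-isomorphism $\varphi$, into the order formula. For (1), observe that $\Gal(K/\Q)\cong\Z/2\Z$ is cyclic of order $2$ and $E(K)$ is finitely generated by the Mordell-Weil theorem over the number field $K$; the cohomology of a finite group with finitely generated coefficients is finite in positive degrees and is annihilated by the group order, so $H^1(K/\Q,E(K))$ is a finite group killed by $2$, hence a finite $2$-group. The identical argument shows $H^2(K/\Q,E(K))$ is a finite $2$-group, which I will need below. For (2), I would invoke the five-term exact sequence extending the displayed inflation-restriction sequence,
$$0 \to H^1(K/\Q,E(K)) \stackrel{\text{inf}}{\to} H^1(\Q,E) \stackrel{\text{res}}{\to} H^1(K,E)^{\Gal(K/\Q)} \to H^2(K/\Q,E(K))\,,$$
and note that the $\Gal(K/\Q)$-invariants of $H^1(K,E)$ for the action defined above are precisely the $+1$-eigenspace $H^1(K,E)^+$; thus the image of restriction lies in $H^1(K,E)^+$. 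For (3), I would use the averaging trick from Lemma \ref{qit_mw}(2): for any $c\in H^1(K,E)$ one has $c+c^\sigma\in H^1(K,E)^+$ and $c-c^\sigma\in H^1(K,E)^-$, so $2c=(c+c^\sigma)+(c-c^\sigma)\in H^1(K,E)^++H^1(K,E)^-$, and the quotient is killed by $2$.

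With these established, the comparison of the cohomology groups follows. By the five-term sequence the kernel of $\text{res}$ is the finite $2$-group $H^1(K/\Q,E(K))$, while by (2) its image sits in $H^1(K,E)^+$ with cokernel injecting into the finite $2$-group $H^2(K/\Q,E(K))$; hence $H^1(\Q,E)$ and $H^1(K,E)^+$ agree up to finite $2$-groups. Applying this to $E^d$ gives $H^1(\Q,E^d)\approx H^1(K,E^d)^+$, and since $\varphi$ induces an isomorphism $H^1(K,E)\cong H^1(K,E^d)$ interchanging the $\pm$-eigenspaces, as shown earlier in this section, I obtain $H^1(K,E^d)^+\cong H^1(K,E)^-$. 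Combining with (3) and the fact that $H^1(K,E)^+\cap H^1(K,E)^-$ is $2$-torsion, I conclude that, up to $2$-groups, $H^1(K,E)$ decomposes as $H^1(K,E)^+\oplus H^1(K,E)^-\approx H^1(\Q,E)\oplus H^1(\Q,E^d)$.

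Finally I would pass to Shafarevich-Tate groups. Because $\Gal(K/\Q)$ permutes the places of $K$ compatibly with the eigenspace action and $\varphi$ is defined over $K$, both preserve the everywhere-local-triviality condition; thus $\Sha(K,E)$ is $\sigma$-stable and every map above carries $\Sha$ into $\Sha$. Intersecting the displayed identifications with $\Sha$ and taking orders, using the assumed finiteness of $\Sha(K,E)$, yields $\#\Sha(K,E)=\#\Sha(\Q,E)\cdot\#\Sha(\Q,E^d)$ up to a factor of $2$. I expect this last step to be the main obstacle: one must verify that restricting the $H^1$-level identifications to the $\Sha$ subgroups introduces only $2$-power discrepancies. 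This reduces to the local statement that, for each place $v$ of $\Q$ and $w\mid v$ of $K$, the kernel of $H^1(\Q_v,E)\to H^1(K_w,E)$ equals $H^1(K_w/\Q_v,E(K_w))$, again a $2$-group; combined with the global $2$-group kernels and cokernels already identified, this guarantees that the comparison of $\Sha(\Q,E)$ with $\Sha(K,E)^+$, and of $\Sha(\Q,E^d)$ with $\Sha(K,E)^-$, is an isomorphism up to finite $2$-groups.
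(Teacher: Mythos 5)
Your proposal is correct and follows essentially the same route as the paper's proof: the inflation--restriction sequence, the three numbered claims proved the same way (your (3) is verbatim the paper's averaging trick), and the fact that $\varphi$ interchanges the $\pm$-eigenspaces of $H^1(K,E)$ and $H^1(K,E^d)$. The only differences are that you replace the paper's explicit cocycle computations in (1) and (2) with the general finiteness of group cohomology and the five-term exact sequence, and you make explicit two steps the paper leaves implicit---the cokernel bound via the finite $2$-group $H^2(K/\Q,E(K))$ and the local-kernel verification (that $\ker\bigl(H^1(\Q_v,E)\to H^1(K_w,E)\bigr)=H^1(K_w/\Q_v,E(K_w))$ is killed by $2$) needed to restrict the $H^1$-identifications to $\Sha$.
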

\begin{proof}
 We prove the first identification by establishing claims (1) and (2). The second comes from the analogous inf-res sequence for $E^d$ together with the fact that $\varphi$ takes $H^1(K,E)^-$ to $H^1(K,E^d)^+$.
 \begin{enumerate}
  \item A cocycle $\xi : G \rightarrow E(K)$ is determined by the image of $\sigma$ and since $0 = \xi(1) = \xi(\sigma) + \xi(\sigma)^\sigma$ we have $\xi(\sigma) \in E(K)^-$. Since $2\xi(\sigma) = (\xi(\sigma) + \xi(\sigma)^\sigma) + (\xi(\sigma) - \xi(\sigma)^\sigma) = \xi(\sigma) - \xi(\sigma)^\sigma$ every cocycle class is of order 2. Thus $H^1(K/\Q,E(K))$ is a 2-group which is a quotient of $E(K)^-$ and hence a finite 2-group.
  \item Suppose $\psi$ is a cocycle class representative in the image of the restriction map. Then there is a $\{\xi\} \in H^1(\Q,E)$ such that $\xi|_{G_K} = \psi$. Then one calculates for $\tau \in G_K$:
  \begin{align*}
   \psi^\sigma(\tau) - \psi(\tau) 
                                  &= \xi(\sigma\tau\sigma)^\sigma - \xi(\tau) \\
                                  &= \left( \xi(\sigma) + \xi(\tau\sigma)^\sigma \right)^\sigma - \xi(\tau) \\
                                  &= \xi(\sigma)^\sigma + \xi(\sigma)^\tau \\
                                  &= \xi(\sigma)^\tau - \xi(\sigma)\,.
  \end{align*}
  \item For $\{\xi\} \in H^1(K,E)$ we have $2\xi = (\xi + \xi^\sigma) + (\xi - \xi^\sigma)$.
 \end{enumerate}
\end{proof}

The following lemma is a generalization of a formula which appeared in \cite[p. 312]{gz86} without proof.
\begin{lem}\label{omega_relation}
Suppose $d < 0$ is a square-free integer. Then with $\delta = \delta(E, d)$ as defined above, we have:
$$
\Omega(E) \cdot \Omega(E^d) \cdot \delta^{1/2} = [E(\R) : E^0(\R)]\cdot ||\omega||^2.
$$
\end{lem}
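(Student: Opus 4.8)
The plan is to read all three period quantities off the canonical period lattices and to exploit the lattice relation $\Lambda_d = |\delta/d|^{-1/2}\theta\Lambda$ established above. Write $\mu = |\delta/d|^{-1/2}\theta$, so that $\Lambda_d = \mu\Lambda$. Since $d < 0$ we have $\theta^2 = 1/d < 0$, so $\theta$ is purely imaginary; hence $\mu$ is purely imaginary with $|\mu| = |\delta/d|^{-1/2}|d|^{-1/2} = \delta^{-1/2}$ (recall $\delta > 0$). Thus multiplication by $\mu$ is a rotation by a right angle followed by scaling by $\delta^{-1/2}$, and in particular it carries $\R$ onto $i\R$ and $i\R$ onto $\R$. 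This interchange of the real and imaginary axes is the geometric heart of the statement, and it is exactly what the hypothesis $d < 0$ provides.

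First I would record how each quantity reads off a conjugation-stable lattice $\Lambda$. Let $R(\Lambda)$ be the least positive element of $\Lambda \cap \R$ (the real period), let $I(\Lambda)$ be the least positive real number with $iI(\Lambda) \in \Lambda$, and set $n(\Lambda) = [\Lambda : (\Lambda\cap\R) \oplus (\Lambda\cap i\R)] \in \{1,2\}$. A short case analysis on the sign of $\Delta$ yields the dictionary
\[
c_\infty := [E(\R):E^0(\R)] = 2/n(\Lambda), \qquad \Omega(E) = c_\infty\,R(\Lambda), \qquad \|\omega\|^2 = 2\,\mathrm{Area}(\Lambda) = \tfrac{2}{n(\Lambda)}R(\Lambda)I(\Lambda).
\]
When $\Delta > 0$ the lattice is rectangular, so $n = 1$, $c_\infty = 2$ and $\mathrm{Area} = R(\Lambda)I(\Lambda)$; when $\Delta < 0$ one has $n = 2$, $c_\infty = 1$, the real locus is a single circle, and $(\Lambda\cap\R)\oplus(\Lambda\cap i\R)$ sits with index $2$ in $\Lambda$, so $\mathrm{Area} = \tfrac12 R(\Lambda)I(\Lambda)$.

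Next I would transport this dictionary through $\mu$. Because multiplication by $\mu$ carries $\Lambda$ to $\Lambda_d$, swaps the two axes, and scales lengths by $\delta^{-1/2}$, it gives $R(\Lambda_d) = \delta^{-1/2}I(\Lambda)$ and $I(\Lambda_d) = \delta^{-1/2}R(\Lambda)$ while preserving the index, $n(\Lambda_d) = n(\Lambda) =: n$. In particular $c_\infty(E^d) = c_\infty(E)$, which is also visible from $\Delta' = \delta^6\Delta$ having the same sign as $\Delta$. Hence $\Omega(E^d) = c_\infty(E^d)\,R(\Lambda_d) = \tfrac{2}{n}\,\delta^{-1/2}I(\Lambda)$.

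Finally I would combine the pieces. On the left,
\[
\Omega(E)\cdot\Omega(E^d)\cdot\delta^{1/2} = \Big(\tfrac{2}{n}R(\Lambda)\Big)\Big(\tfrac{2}{n}\delta^{-1/2}I(\Lambda)\Big)\delta^{1/2} = \tfrac{4}{n^2}R(\Lambda)I(\Lambda),
\]
while on the right
\[
[E(\R):E^0(\R)]\cdot\|\omega\|^2 = \tfrac{2}{n}\cdot\tfrac{2}{n}R(\Lambda)I(\Lambda) = \tfrac{4}{n^2}R(\Lambda)I(\Lambda),
\]
so the two sides agree. The only genuine work — and the step most prone to error — is establishing the dictionary of the second paragraph uniformly in the sign of $\Delta$: one must count the real components and compute the index $n$ correctly in each case, and confirm that the purely imaginary scaling $\mu$ preserves $n$ while exchanging the real and imaginary periods. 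Everything else is bookkeeping.
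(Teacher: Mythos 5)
Your proof is correct, and it rests on the same underlying mechanism as the paper's: everything is driven by the relation $\Lambda_d = |\delta/d|^{-1/2}\theta\Lambda$ derived just before the lemma, with $d<0$ forcing $\theta$ (hence your $\mu$) to be purely imaginary, so that real and imaginary periods are exchanged and lengths scale by $\delta^{-1/2}$. The difference is in the bookkeeping. The paper fixes a fundamental domain with base $[0,x]$ and upper-left corner of height $y$, so that $\Omega(E)/[E(\R):E^0(\R)] = x$ and $\|\omega\|^2 = 2xy$ hold uniformly in the sign of $\Delta$, and then computes the remaining factor in one stroke as a pullback integral, $\delta^{1/2}\Omega(E^d) = \int_{E(\C)^-}\delta^{1/2}|\varphi^*\omega'| = \int_{E(\C)^-}|\omega| = 2y$, where $E(\C)^- = \varphi^{-1}(E^d(\R))$ is the anti-real locus; this absorbs the component count of $E^d(\R)$ automatically (the integral is over all of $E^d(\R)$) and never distinguishes rectangular from rhombic lattices, since in both cases the anti-real locus has total $|\omega|$-length $2y$. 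You instead set up the explicit dictionary $(R(\Lambda), I(\Lambda), n(\Lambda))$ with $c_\infty = 2/n$, $\Omega = c_\infty R$, $\|\omega\|^2 = (2/n)RI$, verify it separately for $\Delta>0$ and $\Delta<0$, and transport it through $\mu$. Your route is a bit longer and puts the burden exactly where you flag it, on the case analysis --- which you handle correctly: in the rhombic case $I(\Lambda)=2y$ and $n=2$, so your $\tfrac{2}{n}RI$ agrees with the paper's $2xy$ in both cases --- but it buys transparency: one sees precisely where $c_\infty$, the axis swap, and the scaling $\delta^{-1/2}$ each enter, and it makes explicit the fact $c_\infty(E^d)=c_\infty(E)$ (visible from $\Delta' = \Delta\delta^6$), which the paper's proof never needs. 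I see no gap: the facts you defer to ``a short case analysis'' (two real components and a rectangular lattice iff $\Delta>0$; one component and index $n=2$ iff $\Delta<0$) are standard for conjugation-stable lattices, and your observation that multiplication by the purely imaginary $\mu$ preserves $n$ while interchanging $R$ and $I$ is sound.
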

\begin{proof}
Let $x$ be the least positive real element of the period lattice $\Lambda$, and choose a fundamental domain for $\Lambda$ with base $[0,x] \subset \R$ and upper left corner with positive imaginary part $y$ and real part in $[0,x)$. Then $\Omega(E)/[E(\R):E^0(\R)] = x$ and $||\omega||^2 = 2xy$. We compute
$$
\delta^{1/2}\Omega(E^d) = \delta^{1/2}\int_{E^d(\R)}|\omega'| = \int_{E(\C)^-}\delta^{1/2}|\varphi^*\omega'| = \int_{E(\C)^-}|\omega| = 2y\,.
$$
The claim follows.
\end{proof}

\section{Complex Multiplication}
Let $E$ be an elliptic curve over $\Q$, let $R$ denote the endomorphism ring $\End(E/\C)$, let $K$ denote its field of fractions and let $\Aut_R(E[p])$ denote the set of automorphisms of $E[p]$ commuting with the action of $R$. Consider the map $\overline \rho_{E,p} : G_\Q \rightarrow \Aut(E[p])$, which we call the mod-$p$ Galois representation.

If $E$ does not have complex multiplication, then $R = \Z$, $K = \Q$ and the groups $\Aut_R(E[p])$ and $\Aut(E[p]) \cong \GL_2(\F_p)$ are identical. If $E$ does have complex multiplication, then $R$ is an order in the quadratic imaginary field $K$ and we have $\overline \rho_{E,p}|_{G_K} : G_K \rightarrow \Aut_R(E[p]) \subsetneq \Aut(E[p])$. In either case we will say that $\overline \rho_{E,p}$ is \em surjective \em if the image of $G_K$ is $\Aut_R(E[p])$. Often in the literature one sees this defined as being ``as surjective as possible'' in the complex multiplication case. In Section \ref{examples} we will give several examples of this.

Note that there is always an isogeny defined over $\Q$ from $E$ to an elliptic curve $E'$ with complex multiplication by a maximal order. $E$ has complex multiplication by a non-maximal order if and only if its $j$-invariant is in the set $\{-12288000, 54000, 287496, 16581375\}$ \cite[p. 483]{silverman-advanced}.

We have the following theorem of Rubin:
\begin{thm}\label{CM_rubin}
Suppose $E$ is an elliptic curve defined over $K$ with complex multiplication by $\O_K$. With $w = \#\O_K^\times$ and $\tau \in \C^\times$ a generator of $\Lambda$, i.e., $\tau\O_K = \Lambda$, we have
\begin{enumerate}
\item If $L(E/K, 1) \neq 0$ then $E(K)$ is finite, $\Sha(K, E)$ is finite and there is a $u \in \O_K[w^{-1}]^\times$ such that
$$
L(E/K, 1) = \frac{\#\Sha(K, E)\cdot \tau\overline\tau}{u \cdot (\#E(K))^2}\,.
$$
\item If $L(E/K, 1) = 0$, then either $E(K)$ is infinite, or the $\p$-part of $\Sha(K, E)$ is infinite for all primes $\p \nmid \#\O_K^\times$.
\item If $E$ is defined over $\Q$ and $\ran(E/\Q) = 1$, then $\BSD(E/\Q, p)$ is true for all odd $p$ which split in $K$.
\end{enumerate}
\end{thm}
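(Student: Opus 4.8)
The plan is to regard parts (1) and (2) as Rubin's theorem proper and to \emph{derive} part (3) from them together with the quadratic-twist machinery of Section~\ref{qit} and the rank-one results of Gross--Zagier and Kolyvagin. Before anything else I would reduce to the case that $E$ has complex multiplication by the \emph{maximal} order $\O_K$: the excerpt records that every such $E$ is $\Q$-isogenous to one with CM by the maximal order, and since $\BSD(E,p)$ is isogeny-invariant \cite{cassels-65}, nothing is lost. For parts (1) and (2) I would not attempt an independent argument, since these are exactly Rubin's theorems on Shafarevich--Tate groups and $L$-values of CM elliptic curves: the engine is the Euler system of elliptic units and the resulting proof of the Iwasawa main conjecture for imaginary quadratic fields, which pins the characteristic ideal of the relevant Selmer group to the $p$-adic $L$-function and, via Kolyvagin's descent, bounds $\Sha(K,E)$; specialising at $s=1$ gives the displayed formula up to the unit $u\in\O_K[w^{-1}]^\times$ when $L(E/K,1)\neq 0$, and the stated dichotomy when $L(E/K,1)=0$. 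This Iwasawa-theoretic input is the deep part of the statement and the real obstacle; everything below is bookkeeping built on top of it.

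For part (3), fix $E/\Q$ with CM by $\O_K$, with $\ran(E/\Q)=1$, and an odd prime $p$ that splits in $K$. First I would note that $r=\ran(E/\Q)=1$ and $\Sha(\Q,E)$ is finite by Gross--Zagier and Kolyvagin, so conditions (1)--(3) in the definition of $\BSD(E,p)$ already hold and only the $p$-adic valuation (4) remains. The unit ambiguity of part (1) is then harmless: since $w=\#\O_K^\times\in\{2,4,6\}$, its only possible odd prime factor is $3$, which occurs only for $K=\Q(\sqrt{-3})$, where $3$ ramifies rather than splits; hence an odd \emph{split} $p$ never divides $w$ and $u$ is a $p$-adic unit. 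The strategy is to compare $\BSD(E,p)$ with the corresponding statement over $K$. Taking $d<0$ to be the discriminant of $K$, the endomorphism $[\sqrt{d}]\in\O_K$ is defined over $K$ and satisfies $[\sqrt{d}]^\sigma=-[\sqrt{d}]$, so that the composite $\varphi\circ[\sqrt{d}]\colon E\to E^d$ is $\Q$-rational (using $\varphi^\sigma=[-1]\circ\varphi$ from Section~\ref{qit}); thus $E^d$ is $\Q$-isogenous to $E$, and in particular $\BSD(E^d,p)$ is equivalent to $\BSD(E,p)$.

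Next I would assemble the $K$-to-$\Q$ dictionary at the odd prime $p$, where the factor-of-$2$ and finite-$2$-group ambiguities of Lemmas~\ref{qit_mw} and~\ref{qit_sha} are invisible. The factorisation $L(E/K,s)=L(E/\Q,s)\cdot L(E^d/\Q,s)$ makes orders of vanishing and leading terms multiply; Lemma~\ref{qit_sha} yields $\ord_p\#\Sha(K,E)=\ord_p\#\Sha(\Q,E)+\ord_p\#\Sha(\Q,E^d)$; Lemma~\ref{omega_relation} converts the product of real periods into $||\omega||^2$; and Lemma~\ref{qit_mw} identifies $E(K)$ with $E(\Q)\oplus E^d(\Q)$ up to a $2$-power index, so that regulators, torsion orders, and the Tamagawa factors agree on their $p$-parts. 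Consequently the $p$-part of the Birch--Swinnerton-Dyer formula over $K$ is equivalent to the product of the $p$-parts over $\Q$ for $E$ and for $E^d$, and since $E\sim E^d$ these two coincide. It therefore suffices to prove the $p$-part of the formula for $E/K$.

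It remains to supply that $K$-statement, and here lies the crux. Because $\ran(E/\Q)=1$ forces $\ran(E/K)=2$, the clean formula of part (1) does not apply over $K$, and I would instead extract the $p$-part directly from Rubin's main conjecture: its equality of characteristic ideals specialises, independently of the rank, to the predicted $p$-adic valuation of $\#\Sha(K,E)$ once the regulator and period are accounted for. The step I expect to be hardest is exactly this passage from the bare main conjecture to the exact leading-term valuation in positive rank, that is, matching Rubin's Selmer-group bound against the rank-one regulator; this is precisely the content that parts (1)--(2) alone do not provide and that the anticyclotomic theory of Heegner points over $K$ is designed to supply.
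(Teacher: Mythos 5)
The paper does not argue this theorem at all: its entire proof is the citation ``See \cite{rubin-cm}'', and all three parts --- including part (3) --- are Rubin's results, with (3) resting on the main conjecture for imaginary quadratic fields together with Perrin-Riou's $p$-adic Gross--Zagier formula and nondegeneracy of the $p$-adic height pairing. Your handling of (1)--(2) therefore coincides with the paper. The problem is part (3), which you attempt to \emph{derive} rather than cite. Your preparatory steps are sound: an odd split $p$ never divides $w$, so $u$ is a $p$-adic unit; $E$ is $\Q$-isogenous to $E^d$ (the same degree-$|d|$ isogeny the paper itself uses in the proof of Corollary \ref{CM_zero}); and Lemmas \ref{qit_mw}, \ref{qit_sha} and \ref{omega_relation} do transfer $p$-parts faithfully for odd $p$, so the $p$-part of the BSD formula over $K$ is equivalent to the square of the $p$-part over $\Q$. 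But this reduction terminates in a statement you do not prove: the $p$-part of the leading-coefficient formula for $E/K$ in a situation where $\ran(E/K)=2$, and parts (1)--(2) say nothing about it beyond the dichotomy in (2).

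The specific claim that the main conjecture ``specialises, independently of the rank, to the predicted $p$-adic valuation of $\#\Sha(K,E)$ once the regulator and period are accounted for'' is where the argument fails: at the trivial character the relevant $p$-adic $L$-function vanishes, and the equality of characteristic ideals bounds the Selmer group only after one compares the leading term of the $p$-adic $L$-function with a $p$-adic regulator. That comparison requires a $p$-adic Gross--Zagier formula (Perrin-Riou) and the nondegeneracy of the $p$-adic height --- serious theorems, not bookkeeping --- and the valuation of the $p$-adic regulator is not a priori the valuation of the archimedean one. You flag this crux yourself, which makes the proposal an honest reduction but not a proof; worse, the reduction buys nothing, since the anticyclotomic input you would need over $K$ is exactly the content of Rubin's proof of (3) over $\Q$. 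The repair is simply to observe that \cite{rubin-cm} already contains part (3) and cite it, as the paper does, rather than rebuild it from (1)--(2).
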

\begin{proof} See \cite{rubin-cm}. \end{proof}

\begin{cor}\label{CM_zero}
If $E$ is defined over $\Q$, has complex multiplication by $K$ and $\ran(E/\Q) = 0$, then $\BSD(E/\Q,p)$ is true for all $p \geq 5$. If $K \neq \Q(\sqrt{-3})$ then $\BSD(E/\Q,3)$ is true if and only if $3 \nmid \prod_pc_p(E)$.
\end{cor}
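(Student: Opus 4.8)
The plan is to deduce the corollary from Rubin's Theorem \ref{CM_rubin}, applied over the CM field $K$, transporting the resulting formula down to $\Q$ with the quadratic-twist machinery of Section \ref{qit}. First I would reduce to the case that $E$ has complex multiplication by the maximal order $\O_K$: by the remark preceding Theorem \ref{CM_rubin} there is a $\Q$-isogeny from $E$ to a curve $E'$ with CM by $\O_K$, and since $\BSD(E,p)$ is isogeny invariant \cite{cassels-65} it suffices to treat $E'$. When $K\neq\Q(\sqrt{-3})$ the non-maximal orders over $\Q$ have conductor prime to $3$, so the connecting isogeny has degree prime to $3$ and $\prod_p c_p$ retains its $3$-adic valuation; this is what makes the reduction harmless for the $p=3$ assertion.

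Next I would establish $L(E/K,1)\neq 0$. Writing $d<0$ for the squarefree integer with $K=\Q(\sqrt d)$, the element $\sqrt d\in\O_K=\End(E/\C)$ gives an isogeny $\alpha\colon E\to E$ defined over $K$ with $\alpha^\sigma=[-1]\circ\alpha$, since $G_\Q$ acts on the geometric endomorphism ring through the nontrivial automorphism of $K$. Combining with the twisting isomorphism $\varphi\colon E\to E^d$ of Section \ref{qit}, which satisfies $\varphi^\sigma=[-1]\circ\varphi$, the composite $\psi=\varphi\circ\alpha$ is Galois invariant, hence a $\Q$-isogeny $E\to E^d$ of degree $|d|$. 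Therefore $L(E^d/\Q,s)=L(E/\Q,s)$, and the factorization $L(E/K,s)=L(E/\Q,s)\cdot L(E^d/\Q,s)$ gives $L(E/K,1)=L(E/\Q,1)^2\neq 0$ because $\ran(E/\Q)=0$. Thus Theorem \ref{CM_rubin}(1) applies: $\Sha(K,E)$ is finite and $L(E/K,1)=\#\Sha(K,E)\,\tau\overline\tau/(u\,\#E(K)^2)$ with $u\in\O_K[w^{-1}]^\times$.

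The heart is then a valuation comparison. I would multiply the (known, rational) analytic expressions for $\#\Sha(\Q,E)_\mathrm{an}$ and $\#\Sha(\Q,E^d)_\mathrm{an}$, recognize the product of $L$-values as $L(E/K,1)$, substitute Rubin's formula, and replace $\Omega(E)\Omega(E^d)$ and $\#\Sha(K,E)$ using Lemmas \ref{omega_relation} and \ref{qit_sha}; after using $\tau\overline\tau/||\omega||^2=|d_K|^{-1/2}$ and the fact that $|\delta/d|$ is a rational square, every factor becomes rational. Taking $\ord_p$ for $p\geq 5$, the unit $u$, the index $[E(\R):E^0(\R)]$, the factor of $2$ from Lemma \ref{qit_sha}, and the torsion terms (which cancel for odd $p$ because $E(K)$ contains $E(\Q)+E^d(\Q)$ with $2$-power index by Lemma \ref{qit_mw}) all contribute $0$. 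For $p\nmid d$ the $\delta$ and $d_K$ contributions also vanish, and since $\psi$ has degree $|d|$ prime to $p$ the analytic and actual Shafarevich--Tate $p$-parts of $E$ and $E^d$ agree by isogeny invariance; the balanced product statement then splits to yield $\BSD(E,p)$.

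The main obstacle is the small set of primes where the degree $|d|$ and the twisting data interfere. For the single prime $p=|d|\geq 5$ (the class-number-one discriminants with $|d|\geq 5$ are $-\ell$ for a prime $\ell$) the isogeny degree is divisible by $p$, so the clean splitting fails and one must instead carry out an explicit local comparison at the ramified additive prime, checking that the contributions of $\delta^{1/2}|d_K|^{-1/2}$ and of $\prod_p c_p$ conspire to preserve the $p$-adic valuation; here the Proposition of Section \ref{qit} computing $\delta$ is exactly the needed input. For $p=3$ with $K\neq\Q(\sqrt{-3})$ one has $3\nmid w$ and $3\nmid d$, so $u$ is a $3$-adic unit and the $\delta,d_K$ terms drop, and the argument runs as for $p\geq 5$ except that $\prod_p c_p$ need no longer have trivial $3$-adic valuation; tracking this single factor shows the valuation equality holds precisely when $3\nmid\prod_p c_p(E)$, which is the stated criterion. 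When $K=\Q(\sqrt{-3})$ one has $3\mid w$, so $u$ may contribute at $3$ and the method gives nothing there, consistent with the corollary.
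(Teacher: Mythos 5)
Your overall route coincides with the paper's: after reducing to CM by the maximal order, both you and the paper produce a degree-$|d|$ isogeny $E\to E^d$ over $\Q$ (the paper via Galois-stability of the kernel coming from $\theta^{-1}\Lambda\subset\Lambda$, you via $\varphi\circ[\sqrt{d}]$ --- the same map), deduce $L(E/K,1)=L(E/\Q,1)^2\neq 0$, feed Rubin's formula (Theorem \ref{CM_rubin}) through Lemmas \ref{qit_mw}, \ref{qit_sha} and \ref{omega_relation}, compute $\tau\overline{\tau}/||\omega||^2$ from the covolume of $\O_K$, and do the same valuation bookkeeping, including the $w=6$ obstruction at $K=\Q(\sqrt{-3})$ and the Tamagawa tracking that produces the if-and-only-if at $p=3$. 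Your ``prove the product identity, then split it using Cassels and the prime-to-$p$ isogeny'' is only a repackaging of the paper's substitution of Cassels' ratio formula into a single-curve identity.

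The genuine gap is at the ramified prime $p=|d|\in\{7,11,19,43,67,163\}$, which you defer to ``an explicit local comparison at the ramified additive prime.'' As a plan this does not work as stated. First, the Tamagawa product cannot ``conspire'' there: CM curves have additive reduction at every bad prime, so every $c_q\le 4<|d|$ (the Silverman bound the paper cites), and $u$ is prime to $|d|$ since $w\in\{2,4,6\}$; hence the entire $|d|$-adic burden falls on the single factor $\delta^{1/2}|d|^{-1/2}$, i.e.\ on showing $\delta=|d|$ rather than $1/|d|$. Second, since $\Delta(E^d)=\Delta(E)\,\delta(E,d)^6$ and twisting twice is trivial, one has $\delta(E,d)\,\delta(E^d,d)=1$, so $\delta=|d|$ is \emph{not} a uniform local fact across the curves the corollary covers: for the ``already twisted'' member of each pair the Proposition of Section \ref{qit} yields $\delta=1/|d|$ (its $\lambda_{|d|}$ exceeds $6$), so the local computation you propose cannot by itself certify the needed value. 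The paper's missing idea is global, not local: in the final identity every factor other than $(\delta/|d|)^{1/2}$ has even $|d|$-adic valuation and $L(E/\Q,1)/\Omega(E)$ is rational, so $\ord_{|d|}\bigl((\delta/|d|)^{1/2}\bigr)$ must be even, forcing $\delta=|d|$ (and, as a byproduct, the rationality of $u$); together with isogeny invariance this disposes of the case with no local analysis. A minor point in the same vein: at $p\ge 5$ your list of factors contributing valuation $0$ omits $\prod_q c_q$ itself, which needs the same $c_q\le 4$ bound.
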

\begin{proof}
Let $K = \Q(\sqrt{d})$ for $d < 0$ a squarefree integer and without loss suppose $E$ has complex multiplication by $\O_K$. Letting $E^d$ be the model defined in Section \ref{qit} so that $\Lambda_d = \theta\Lambda$, note that since $[\theta^{-1}]$ is an endomorphism of $E$ we have $\theta^{-1}\Lambda \subset \Lambda$ which implies that $\Lambda \subset \Lambda_d$. Thus $E$ is isogenous to $E^d$, and this degree $|d|$ isogeny is defined over $\Q$ since its kernel is $\{z\Lambda : z \in \theta\Lambda\}$ and $G_\Q$ acts by multiplying $\theta$ by $\pm 1$. Thus we have that
$$
L(E/K,s) = L(E/\Q,s)^2\,.
$$

Since $L(E/\Q,1) \neq 0$ we have $L(E/K,1) \neq 0$ so $E(K)$ and $\Sha(K,E)$ are finite. By Lemmas \ref{qit_mw} and \ref{qit_sha} we have that $E(\Q)$ and $\Sha(\Q,E)$ are finite and
$$
L(E/\Q,1)^2 = \frac{\#\Sha(\Q,E)\cdot\#\Sha(\Q,E^d) \cdot \tau\overline\tau}{(\#E(\Q)\cdot\#E^d(\Q))^2}\cdot 2^wu^{-1}\,,
$$
where $w \in \Z$. In this situation we have \cite[Cor. 1.3]{cassels-65}:
$$
\frac{\#\Sha(\Q,E^d)}{\#E^d(\Q)^2} = \frac{\#\Sha(\Q,E)}{\#E(\Q)^2}\cdot\frac{\Omega(E)}{\Omega(E^d)}\prod_p\frac{c_p(E)}{c_p(E^d)}\,.
$$
By Lemma \ref{omega_relation} we have:
$$
\frac{L(E/\Q,1)^2}{\Omega(E)^2} = \frac{\#\Sha(\Q,E)^2}{\#E(\Q)^4}\cdot \frac{\tau\overline\tau\delta^{1/2}}{||\omega||^2}\cdot\frac{2^w}{[E(\R):E^0(\R)]\cdot u}\cdot\prod_p\frac{c_p(E)}{c_p(E^d)}\,.
$$
Note that $||\omega||^2/2$ is the area of $\Lambda = \tau\O_K$, i.e., $\tau\overline\tau$ times the area of $\O_K$. If $d \in \{-1,-2\}$ then this is $|d|^{1/2}$ and otherwise it is $|d|^{1/2}/4$. Therefore:
$$
\frac{\tau\overline\tau\delta^{1/2}}{||\omega||^2} = 2^v\delta^{1/2}|d|^{-1/2}\,.
$$
If $d \not\equiv 1 \pmod 4$ then $v = -1$ and $\delta/|d| \in \{4,1,1/4,1/16\}$. If $d \equiv 1 \pmod 4$ then $v = 1$ and $\delta \in \{|d|,1/|d|\}$. We will show that in this case $\delta = |d|$.

As noted in \cite[p. 176]{silverman-advanced}, since $E$ has complex multiplication it must be of additive reduction at all the bad primes. By \cite[Cor. 15.2.1, p. 359]{Silverman_AEC} the product $\prod_p c_p(E)$ is at most 4, and similarly for $\prod_pc_p(E^d)$. Since $c_p(E)/c_p(E^d) \in \{|d|,1,1/|d|\}$ for each prime $p$, if $d < -3$ then $\prod_pc_p(E)/c_p(E^d) = 1$ and if $d > -3$ then this product is a power of two. Now suppose $d < -3$ (hence $|d|$ is a prime) and note that since $L(E/\Q,1)/\Omega(E)$ is a rational number, we must have that $\ord_{|d|}((\delta/|d|)^{1/2})$ is even and this forces $\delta = |d|$ (this also implies that the other factors combine to make a perfect square, and that the ``error term'' $u$ is a rational number).

In summary, we have that
$$
L(E/\Q,1)^2 = \left(\frac{\#\Sha(\Q,E)\cdot \Omega(E)}{\#E(\Q)^2}\right)^2\cdot \left(\frac{2^{v+w}\delta^{1/2}}{[E(\R):E^0(\R)]\cdot|d|^{1/2}}\right) \cdot
\left(\frac1u\prod_p\frac{c_p(E)}{c_p(E^d)}\right)\,,
$$
where the second factor is in $2^{2\Z}$ and the third factor is in $2^{2\Z}$ if $d \neq -3$ and in $2^{2\Z}3^{2\Z}$ if $d = -3$. Since $\Reg(E(\Q)) = 1$ the claim follows.
\end{proof}

\begin{prop}
 If $E/\Q$ has rank 0, has complex multiplication by $K \neq \Q(\sqrt{-3})$ and has conductor $N < 130000$, then $\BSD(E/\Q, 3)$ is true.
\end{prop}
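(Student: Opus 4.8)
The plan is to strip $\BSD(E/\Q,3)$ down to a statement about Tamagawa numbers and then settle it through the complex multiplication. Since $N<130000$, the verification of the rank conjecture in this range \cite{crem-130K} upgrades the hypothesis $\rank E(\Q)=0$ to $\ran(E/\Q)=0$; this is the only place the conductor bound is strictly needed, and it is what lets us invoke Corollary \ref{CM_zero}. Because $K\neq\Q(\sqrt{-3})$, that corollary reduces $\BSD(E/\Q,3)$ to the single assertion $3\nmid\prod_p c_p(E)$, which no longer mentions $L$-values, the regulator, or $\Sha$.

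Next I would localize the problem to reduction types. As $E$ has complex multiplication, its $j$-invariant is a rational integer, so $E$ has additive, potentially good reduction at every bad prime, and (as recorded in the proof of Corollary \ref{CM_zero}) $\prod_p c_p(E)\le 4$. Thus $3\mid\prod_p c_p(E)$ can happen only through a single prime with $c_p(E)=3$, and the only additive Kodaira types with Tamagawa number $3$ are $\mathrm{IV}$ and $\mathrm{IV}^*$. It therefore suffices to prove that $E$ has no prime of type $\mathrm{IV}$ or $\mathrm{IV}^*$.

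The key point is that the complex multiplication controls which automorphisms inertia can produce. Over $K$ the inertia group $I_\p$ at a prime $\p$ of additive reduction acts $\O_K$-linearly on $T_\ell E$ (for $\ell\neq p$), so its image consists of automorphisms of the good reduction $\bar E'$ commuting with the reduction of the CM order. Since $\O_K$ embeds into $\End(\bar E')$ as a self-centralizing subring, any such automorphism lies in $\O_K^\times=\mu_w$ with $w=\#\O_K^\times$. Reduction of type $\mathrm{IV}$ or $\mathrm{IV}^*$, however, forces an automorphism of order $3$ into this image, which would require $\mu_3\subset\O_K$, i.e. $K=\Q(\sqrt{-3})$. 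As this is excluded we have $w\in\{2,4\}$, so every bad prime is of type $\mathrm{I}_0^*$, $\mathrm{III}$ or $\mathrm{III}^*$, whence $c_p(E)\in\{1,2,4\}$ and $3\nmid\prod_p c_p(E)$.

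I expect the delicate step to be securing this last argument at $p=2$ and $p=3$, where inertia may be wildly ramified, the reduction type is not read off from $\ord_p(\Delta)\bmod 12$, and $\bar E'$ can be supersingular with a larger automorphism group in characteristic $2$ or $3$. One must verify there that an order-$3$ automorphism coming from inertia still commutes with the embedded $\O_K$ and is hence pinned into $\mu_w$. Granting this, types $\mathrm{IV}$ and $\mathrm{IV}^*$ remain impossible for $K\neq\Q(\sqrt{-3})$; alternatively, since the conductor bound already makes the eligible curves---quadratic twists of the standard CM curves for the ten $j$-invariants with $K\neq\Q(\sqrt{-3})$---a finite list, one can simply compute $c_2$ and $c_3$ for each by Tate's algorithm and confirm $c_p(E)\neq 3$, completing the proof.
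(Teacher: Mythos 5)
Your proposal is correct, but it goes well beyond the paper's own proof, which is a single line: in this conductor range one simply checks computationally that $3 \nmid \prod_p c_p(E)$ and then applies Corollary \ref{CM_zero} (with the rank verification of \cite{crem-130K} implicitly supplying $\ran(E/\Q)=0$, exactly as you spell out). Your reduction to the Tamagawa condition is identical to the paper's; the divergence is that you then try to prove $3\nmid\prod_p c_p(E)$ conceptually rather than by table lookup. That argument is sound where the reduction is tame: additive potentially good reduction plus $c_p\leq 4$ pins a factor of $3$ to type $\mathrm{IV}$ or $\mathrm{IV}^*$, and for $p\geq 5$ the inertia image in $\Aut(\bar E')$ is cyclic of order $12/\gcd(\ord_p\Delta,12)$, commutes with the embedded CM order after restricting to $I_{K_\p}$ (an index-at-most-$2$ restriction that cannot lose an order-$3$ element), and hence lies in $\mu_w$ with $w\in\{2,4\}$ for $K\neq\Q(\sqrt{-3})$ --- so types $\mathrm{IV}$, $\mathrm{IV}^*$ (and incidentally $\mathrm{II}$, $\mathrm{II}^*$, which have $c_p=1$ anyway) are impossible. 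You are right to flag $p=2,3$ as the delicate point: at $p=2$ one can still close it cleanly, since the prime-to-$p$ part of the component group embeds in $H^1(I,T_3E)$, which has no $3$-torsion when the inertia image has order prime to $3$; but at $p=3$ the order-$3$ component group is the wild part and no such $\ell$-adic argument applies, so your computational fallback (Tate's algorithm on the finitely many eligible twists) is genuinely needed there --- and that fallback is precisely the paper's entire proof. The trade-off: the paper's computation is short and uniform, while your route explains \emph{why} $\Q(\sqrt{-3})$ is the exceptional field ($\mu_3\subset\O_K$) and, for $p\geq 5$, proves $c_p\neq 3$ for \emph{all} CM curves with $K\neq\Q(\sqrt{-3})$ independently of any conductor bound, so the bound $N<130000$ would survive only in the rank-to-analytic-rank step and in the residual check at $p\in\{2,3\}$.
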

\begin{proof}
 In this situation it is easy to check that $3 \nmid \prod_pc_p$ using computation.
\end{proof}

\begin{lem}\label{rubin-cyclic}
With $E$ and $K$ as above, let $\p$ be a prime of $K$ of good reduction for $E$ which does not divide $\#\O_K^\times$. Then $K(E[\p])/K$ is a cyclic extension of degree $\mathrm{Norm}(\p) - 1$ in which $\p$ is totally ramified.
\end{lem}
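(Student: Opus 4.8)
The plan is to combine the complex-multiplication structure of $E[\p]$ as an $\O_K$-module with Lubin--Tate theory for the formal group of $E$ at $\p$. First I would identify the Galois module. Since $E$ has complex multiplication by $\O_K$ we may write $E(\C)=\C/\Lambda$ with $\Lambda$ a fractional $\O_K$-ideal, whence $E[\p]=\p^{-1}\Lambda/\Lambda\cong\O_K/\p$ as $\O_K$-modules. In particular $E[\p]$ is one-dimensional over the residue field $\O_K/\p\cong\F_q$ with $q=\mathrm{Norm}(\p)$, so $\#E[\p]=q$ and $\Aut_{\O_K}(E[\p])=(\O_K/\p)^\times$ is cyclic of order $q-1$. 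Because $E$ is defined over $K$ with CM by $\O_K$, all endomorphisms are defined over $K$, so the Galois action commutes with $\O_K$; hence $\overline{\rho}_{E,\p}$ has image in $\Aut_{\O_K}(E[\p])$ and induces an injection $\Gal(K(E[\p])/K)\hookrightarrow(\O_K/\p)^\times$. This already shows the extension is cyclic of degree dividing $q-1$.

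The crux is to produce total ramification at $\p$ and thereby force the degree up to exactly $q-1$; for this I would pass to the completion $K_\p$ and study the formal group $\hat{E}$ over $\O_{K,\p}$. The CM action extends by continuity, making $\hat{E}$ a one-dimensional formal $\O_{K,\p}$-module. Writing $h$ for its $\O_{K,\p}$-height, the $\p$-torsion $\hat{E}[\p]$ has exactly $q^h$ elements; on the other hand these are genuine $\p$-torsion points of $E$ reducing to $\O$, so $\hat{E}[\p]\subseteq E[\p]$ and $q^h\le q$. Since the formal group of an elliptic curve has positive height, $h=1$ and $\hat{E}[\p]=E[\p]$: every $\p$-torsion point lies in the formal group. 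I expect this to be the main obstacle, since it is exactly the input that rules out the a priori possibility (in the split, ordinary case) that $E[\p]$ reduces isomorphically and so generates an unramified extension. The module-theoretic bound $\#E[\p]=q$ is what closes that gap uniformly, with no case split on the splitting behaviour of $\p$ in $K$.

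Finally, because $\hat{E}$ has $\O_{K,\p}$-height one it is a Lubin--Tate formal module for $K_\p$: choosing a uniformizer $\pi$ generating $\p\O_{K,\p}$, one has $[\pi](T)=\pi T+\cdots$ with $[\pi](T)\equiv(\text{unit})\,T^{q}\pmod{\p}$, so $[\pi](T)/T$ is Eisenstein of degree $q-1$ over $\O_{K,\p}$. Its roots are the nonzero elements of $\hat{E}[\p]=E[\p]$, whence $K_\p(E[\p])/K_\p$ is totally ramified of degree $q-1$. Since the local degree cannot exceed the global degree, which in turn divides $q-1$ by the first paragraph, we get $[K(E[\p]):K]=q-1$; and because the local and global degrees coincide with the local extension totally ramified, there is a single prime above $\p$, which is totally ramified. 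The hypothesis $\p\nmid\#\O_K^\times$ is inherited from the CM setting and can be used to keep the module description of $E[\p]$ and the Lubin--Tate normalization free of the pathologies of the residue characteristics $2$ and $3$; it is otherwise inessential to the structure of the argument.
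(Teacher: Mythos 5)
Your proof is correct, but note that the paper itself contains no argument for this lemma: its ``proof'' is the single citation to Rubin \cite[Lemma 21(i)]{rubin-cm2}. So the comparison is really with the standard proofs in the CM literature, which come in two flavours. One runs through the main theorem of complex multiplication: the Galois action on $E[\p]$ is described by the Hecke character, and local class field theory computes the inertia image at $\p$; in that route the hypothesis $\p \nmid \#\O_K^\times$ is genuinely used, to embed the roots of unity $\mu_K$ injectively into $(\O_K/\p)^\times$ when passing between the ray class field mod $\p$ and the full torsion field. The other is exactly your route -- the formal group of $E$ at $\p$ is a Lubin--Tate $\O_{K,\p}$-module of height one (this is essentially de Shalit's treatment) -- and you are right that along this route the hypothesis on $\#\O_K^\times$ plays no role in part (i). Your counting argument $q^h = \#\hat E[\p] \le \#E[\p] = q$, forcing $h = 1$ uniformly, is the nicest feature: it avoids any case split on the splitting of $p$ in $K$ and silently rules out the only dangerous configuration (split $\p$ with $E[\p]$ reducing isomorphically, which would make the extension unramified). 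The degree bookkeeping at the end -- global degree divides $q-1$ by the injection into the cyclic group $(\O_K/\p)^\times$, local degree at least $q-1$ from the Eisenstein polynomial $[\pi](T)/T$, local degree at most global -- is airtight and yields cyclicity, the exact degree, and total ramification at once.

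Two small points deserve an explicit line in a written-up version. First, in the split case the extension of the $\O_K$-action on $\hat E$ to $\O_{K,\p}$ is not purely formal: one has $\O_K \otimes \Z_p \cong \O_{K,\p} \times \O_{K,\overline{\p}}$ acting, and you must check that the idempotent of the conjugate factor acts as zero on the one-dimensional formal group; this follows from the CM normalization, namely that $\O_K$ acts on the tangent space via the natural embedding $\O_K \hookrightarrow K \subset K_\p$. Second, since $E$ is defined over $K$ itself, $j(E) \in K$ forces $K$ to have class number one, so $\p$ is in fact principal and $\pi$ could even be taken to be a global endomorphism; your completion argument does not need this, but it is worth knowing that the lemma's setting is less general than it looks, and it is the reason the paper can apply the lemma to its nine CM fields without further comment.
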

\begin{proof} See \cite[Lemma 21(i)]{rubin-cm2}. \end{proof}

\begin{lem}\label{aut-o}
With $E$ and $K$ as above, we have $(\O_K/p\O_K)^\times \cong \Aut_{\O_K}(E[p])$ for all primes $p$.
\end{lem}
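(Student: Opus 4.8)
The plan is to exhibit an isomorphism of rings between $\O_K/p\O_K$ and the commutant $\End_{\O_K}(E[p])$ of the $\O_K$-action on the $p$-torsion, and then to pass to units. Recall that $E$ has complex multiplication by $\O_K$, so $\End(E/\C) = \O_K$ acts on the $p$-torsion module $E[p]$. Since $E[p] \cong (\Z/p\Z)^2$ is killed by $p$, this action factors through the quotient $\O_K/p\O_K$, giving a ring homomorphism $\O_K/p\O_K \to \End(E[p])$ whose image lands inside the subring of endomorphisms commuting with the $\O_K$-action (indeed $\O_K$ is commutative, so it commutes with itself). I would first argue this map is injective: if $\alpha \in \O_K$ acts as zero on $E[p]$, then the isogeny $[\alpha]$ factors through $[p]$, so $\alpha/p \in \O_K$ by the fact that $E[p] = \ker[p]$, i.e. $\alpha \in p\O_K$.

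Next I would identify the target. The key structural point is that $E[p]$, as a module over $\O_K/p\O_K$, is free of rank one. This follows because $E[p]$ is a faithful $\O_K/p\O_K$-module of the right cardinality: $\#E[p] = p^2 = \#(\O_K/p\O_K)$, and $\O_K/p\O_K$ is a finite commutative ring (a product of fields or a local ring at each prime above $p$) over which a faithful module of cardinality equal to the ring must be free of rank one. Granting that $E[p] \cong \O_K/p\O_K$ as $\O_K/p\O_K$-modules, the endomorphisms of $E[p]$ commuting with the $\O_K$-action are exactly the $\O_K/p\O_K$-module endomorphisms of a free rank-one module, which is $\O_K/p\O_K$ acting by multiplication. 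Thus $\O_K/p\O_K \cong \End_{\O_K}(E[p])$ as rings, and restricting to units gives $(\O_K/p\O_K)^\times \cong \Aut_{\O_K}(E[p])$.

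I would handle the three relevant cases for $p$ uniformly through the structure of $\O_K/p\O_K$: $p$ split (the ring is $\F_p \times \F_p$), $p$ inert (the ring is $\F_{p^2}$), and $p$ ramified (the ring is $\F_p[\epsilon]/\epsilon^2$). In each case freeness of rank one forces the commutant to be the ring itself, so no case analysis is logically necessary beyond confirming that $E[p]$ is cyclic as an $\O_K/p\O_K$-module. The main obstacle is precisely establishing that rank-one freeness: in the ramified and inert cases one must rule out that $E[p]$ could be, say, the non-free module $\O_K/p\O_K \oplus (\text{residue field})/(\dots)$ of the wrong size, which is why the cardinality count $\#E[p] = \#(\O_K/p\O_K) = p^2$ together with faithfulness is doing the real work. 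Faithfulness is immediate from injectivity of the structural map proved in the first step, so the argument closes once one invokes the classification of modules over the finite commutative ring $\O_K/p\O_K$.
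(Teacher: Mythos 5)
Your proof is correct, but it takes a genuinely different route from the paper's. The paper fixes a generator, writing $\O_K = \Z[\alpha]$ (always possible for a quadratic field), lets $M \in \GL_2(\F_p)$ be the matrix of $\alpha$ on $E[p] \cong \F_p^2$, and invokes the linear-algebra fact that the centralizer of a non-scalar $2\times 2$ matrix is the commutative algebra $\F_p[M]$; it then identifies $\F_p[M] \cong \O_K/p\O_K$ by sending $M \mapsto \alpha$. You instead work basis-free: you prove the structure map $\O_K/p\O_K \to \End(E[p])$ is injective via the factorization of $[\alpha]$ through $[p]$, deduce from faithfulness plus the cardinality count $\#E[p] = p^2 = \#(\O_K/p\O_K)$ that $E[p]$ is free of rank one over $\O_K/p\O_K$ (checking the three possible ring structures $\F_p\times\F_p$, $\F_{p^2}$, $\F_p[\epsilon]/(\epsilon^2)$), and conclude that the commutant of a free rank-one module is the ring itself. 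The two arguments rest on the same essential input --- that $p\O_K$ is exactly the kernel of the action of $\O_K$ on $E[p]$ --- which the paper leaves largely implicit (it is what underlies both the assertion that $M$ is non-scalar and the injectivity of its map to $(\O_K/p\O_K)^\times$), whereas you prove it explicitly; one small caution is that your general-sounding principle ``faithful module of cardinality equal to that of a finite commutative ring is free of rank one'' should not be quoted as a blanket fact, but your case check over the three actual rings in play makes it rigorous here. What each approach buys: the paper's version is computationally concrete and is exactly the form used in its Examples \ref{CM_rk1_5}--\ref{CM_rk1_11}, where $M$ and the order of its centralizer are computed by machine; yours yields the stronger, uniform statement $E[p] \cong \O_K/p\O_K$ as modules, makes no use of monogenicity or of the $2\times 2$ centralizer fact, and would generalize more readily (e.g.\ to torsion by ideals or to higher-dimensional CM situations).
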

\begin{proof}
Let $\O_K = \Z[\alpha]$. Then via the isomorphism $E[p] \cong \F_p^2$, the element $\alpha$ acts on $\F_p^2$ by a matrix $M \in \GL_2(\F_p)$ and $\Aut_{\O_K}(E[p])$ is isomorphic to the centralizer of $M$ in $\GL_2(\F_p)$. The centralizer of $M$ is equal to $\F_p[M]$ since $M$ cannot be a scalar element. In other words, we can make the identification $\Aut_{\O_K}(E[p]) = \F_p[\alpha]$ by viewing $\alpha$ as an element of $\Aut(E[p])$. We define an isomorphism $\Aut_{\O_K}(E[p]) \rightarrow (\O_K/p\O_K)^\times$ by sending $\alpha^n$ to $\alpha^n + p\O_K \in (\O_K/p\O_K)^\times$. If $\alpha^n \in p\O_K$ then $M^n = 0$ in $\GL_2(\F_p)$, hence the map is injective. It is surjective since $\O_K = \Z[\alpha]$.
\end{proof}

\begin{prop}\label{CM_surjective}
If $p$ is a prime of good reduction for $E$ not dividing $\#\O_K^\times$ which is inert in $K$, then $\overline \rho_{E,p}$ is surjective.
\end{prop}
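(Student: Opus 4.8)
The plan is to combine the two preceding lemmas—Rubin's cyclicity result (Lemma \ref{rubin-cyclic}) and the identification of the $\O_K$-linear automorphisms (Lemma \ref{aut-o})—into a clean order count, exploiting that an inert prime contributes a single prime of $K$ above $p$ with residue field $\F_{p^2}$.

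First I would observe that, since $p$ is inert in $K$, the ideal $\p = p\O_K$ is itself a prime of $K$, of good reduction for $E$ and not dividing $\#\O_K^\times$ (both properties pass directly from $p$ to $\p$), with $\mathrm{Norm}(\p) = p^2$. Next I would note that the $\p$-torsion coincides with the full $p$-torsion: since $p \in \p = p\O_K$ we have $E[\p] \subseteq E[p]$, and conversely every $P$ with $pP = 0$ is killed by every $\alpha = p\beta \in \p$, so $E[\p] = E[p]$.

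With this in hand, Lemma \ref{rubin-cyclic} gives that $K(E[p]) = K(E[\p])$ is a cyclic extension of $K$ of degree $\mathrm{Norm}(\p) - 1 = p^2 - 1$. The image of $\overline\rho_{E,p}$ restricted to $G_K$ is precisely $\Gal(K(E[p])/K)$, so it is a group of order $p^2 - 1$; moreover it lands inside $\Aut_{\O_K}(E[p])$ because the complex multiplication endomorphisms are defined over $K$ and hence commute with the $G_K$-action. On the other side, Lemma \ref{aut-o} identifies $\Aut_{\O_K}(E[p])$ with $(\O_K/p\O_K)^\times = (\O_K/\p)^\times \cong \F_{p^2}^\times$, a group of order $p^2 - 1$.

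Finally I would conclude by the order count: the image of $G_K$ is a subgroup of $\Aut_{\O_K}(E[p])$ of the same finite cardinality $p^2 - 1$, hence equals it, which is exactly the surjectivity of $\overline\rho_{E,p}$ in the sense defined in this section. I expect no genuine obstacle here, as the substance is carried entirely by Rubin's lemma; the only care needed is the bookkeeping, namely confirming $E[\p] = E[p]$ and verifying that the image truly lies in the $\O_K$-commuting automorphisms, so that equality of orders forces equality of groups.
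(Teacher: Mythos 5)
Your proof is correct and follows essentially the same route as the paper: apply Lemma \ref{rubin-cyclic} with $\p = p\O_K$ to get $\#\Gal(K(E[p])/K) = p^2-1$, then use Lemma \ref{aut-o} and $\O_K/p\O_K \cong \F_{p^2}$ to see $\#\Aut_{\O_K}(E[p]) = p^2-1$, and conclude by the order count. You merely make explicit two small points the paper leaves implicit, namely $E[\p] = E[p]$ and that the image of $G_K$ lies in the $\O_K$-linear automorphisms.
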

\begin{proof}
When $p$ is inert in $K$, $\mathrm{Norm}(\p) = p^2$. By Lemma \ref{rubin-cyclic} $\#\Gal(K(E[p])/K) = p^2 - 1$. Since $\overline \rho_{E,p} : \Gal(K(E[p])/K) \rightarrow \Aut_{\O_K}(E[p])$ is injective it suffices to show that $\#\Aut_{\O_K}(E[p]) = p^2 - 1$. By Lemma \ref{aut-o} this reduces to showing $\#(\O_K/p\O_K)^\times = p^2 - 1$ which is true since $[\O_K/p\O_K : \Z/p\Z] = 2$.
\end{proof}

The following result will be useful:
\begin{thm} \label{CMtorsion}
 Let $E$ be an elliptic curve defined over $\Q$ with complex multiplication by an order of $K = \Q(\sqrt{-d})$ and $p$ an odd prime not dividing $d$. Let $F$ be a Galois number field not containing $K$. Then $E(F)[p]$ is trivial.
\end{thm}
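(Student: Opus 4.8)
The plan is to argue by contradiction, assuming there is a nonzero $P \in E(F)[p]$, and to deduce that $K \subseteq F$. First I would reduce to the case $\End(E) = \O_K$: the only non-maximal CM orders occurring over $\Q$ have conductor $f \in \{2,3\}$, with $f = 3$ only for $K = \Q(\sqrt{-3})$, so the hypotheses that $p$ is odd and $p \nmid d$ force $p \nmid f$; the $\Q$-rational isogeny to a curve with CM by the maximal order then has degree prime to $p$ and induces a $G_\Q$-isomorphism on $p$-torsion. After this reduction $E[p]$ is free of rank one over $\O_K/p\O_K$; by Lemma~\ref{aut-o} the restriction $\overline\rho_{E,p}|_{G_K}$ lands in $\Aut_{\O_K}(E[p]) \cong (\O_K/p\O_K)^\times$; and any $\tilde\sigma \in G_\Q$ lying over the nontrivial element $\sigma$ of $\Gal(K/\Q)$ acts $\O_K$-semilinearly, $\tilde\sigma(\alpha Q) = \bar\alpha\,\tilde\sigma(Q)$. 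Since $p$ is odd and $p \nmid d$ it is unramified in $K$, so $\O_K/p\O_K$ is either $\F_{p^2}$ (inert) or $\F_p \times \F_p$ (split).

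The first step is to show $\Q(E[p]) \subseteq FK$. Because $K \not\subseteq F$ and $[K:\Q] = 2$ we have $F \cap K = \Q$, so I may choose $\tilde\sigma \in G_F \setminus G_K$; it fixes $P$ and acts by conjugation on the CM structure. Now $G_{FK} = G_F \cap G_K$ is $\O_K$-linear and fixes $P$, hence fixes the $\O_K$-submodule it generates. In the inert case $P$ already generates $E[p]$. In the split case I would write $P = P_1 + P_2$ with $P_i \in E[\p_i]$, observe that $\tilde\sigma$ interchanges $E[\p_1]$ and $E[\p_2]$ while fixing $P$, which forces $P_1, P_2 \neq 0$; then $G_{FK}$ fixes a generator of each $E[\p_i]$ and so fixes all of $E[p]$. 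Either way $G_{FK}$ acts trivially, i.e. $\Q(E[p]) \subseteq FK$.

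The second step uses that $F$ is Galois. Then $FK/\Q$ is Galois and, since $F \cap K = \Q$ with both $F$ and $K$ Galois, $\Gal(FK/\Q) \cong \Gal(F/\Q) \times \Gal(K/\Q)$; here $\tilde\sigma|_{FK}$ is exactly the central involution $(1,\sigma)$. Restricting along $\Gal(FK/\Q) \twoheadrightarrow H := \Gal(\Q(E[p])/\Q) = \overline\rho_{E,p}(G_\Q)$, its image $c = \overline\rho_{E,p}(\tilde\sigma)$ is therefore central in $H$. On the other hand the semilinearity computation gives $c\,\overline\rho_{E,p}(g)\,c^{-1} = \overline{\overline\rho_{E,p}(g)}$ for $g \in G_K$, the bar denoting the nontrivial automorphism of $(\O_K/p\O_K)^\times$. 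Centrality of $c$ then forces each $\overline\rho_{E,p}(g)$ to be conjugation-invariant, and the conjugation-fixed subgroup of $(\O_K/p\O_K)^\times$ is precisely the scalars $\F_p^\times$ in both the inert and split cases. Hence $G_K$ acts on $E[p]$ through a scalar character $\psi : G_K \to \F_p^\times$.

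Finally I would eliminate this by a determinant count. By the Weil pairing $\det\overline\rho_{E,p}$ is the mod-$p$ cyclotomic character $\chi$, so scalarity gives $\chi|_{G_K} = \psi^2$. Because $p$ is odd and $p \nmid d$, the integer $-d\,p^*$ with $p^* = (-1)^{(p-1)/2}p$ is not a square, so $K \neq \Q(\sqrt{p^*})$, whence $K \cap \Q(\mu_p) = \Q$ and $\chi|_{G_K}$ is surjective of order $p-1$. But every square in the cyclic group $\F_p^\times$ has order dividing $(p-1)/2$, so $\psi^2$ cannot have order $p-1$ --- the desired contradiction. I expect the main obstacle to be the middle step: arranging matters so that the hypothesis that $F$ is Galois is deployed exactly to make the relevant involution central (which is what fails for non-Galois $F$), and handling the split and inert cases uniformly. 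Once scalarity of the $G_K$-action is extracted, the determinant computation closes the proof with no recourse to good reduction or to Rubin's theorems.
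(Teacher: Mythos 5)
Your proof is correct, but it cannot be matched step-for-step against the paper, because the paper does not prove Theorem~\ref{CMtorsion} at all: its ``proof'' is the bare citation \cite[Theorem 2]{CMtorsion}. What you have written is a self-contained replacement, assembled from material the paper already quotes in Section 3 (Lemma~\ref{aut-o}, the four non-maximal CM $j$-invariants) plus standard CM theory, and I verified each link in the chain. The reduction to $\End(E) = \O_K$ is sound: the non-maximal orders arising over $\Q$ have conductor $f \in \{2,3\}$, with $f = 3$ only for $\Q(\sqrt{-3})$, so $p$ odd and $p \nmid d$ give $p \nmid f$ and the $\Q$-rational isogeny is an isomorphism on $p$-torsion. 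In the inert case a nonzero $P$ generates $E[p]$ as a module over $\O_K/p\O_K \cong \F_{p^2}$; in the split case $\tilde\sigma P = P$ gives $P_1 = \tilde\sigma P_2$ and $P_2 = \tilde\sigma P_1$, so both components are nonzero; either way the $\O_K$-linear group $G_{FK}$ fixes $E[p]$ pointwise and $\Q(E[p]) \subseteq FK$. The Galois hypothesis is spent exactly where you say: $F \cap K = \Q$ with both fields Galois makes $\tilde\sigma|_{FK} = (1,\sigma)$ central, hence $c = \overline\rho_{E,p}(\tilde\sigma)$ is central in the image of $\overline\rho_{E,p}$, and combined with $c\,\overline\rho_{E,p}(g)\,c^{-1} = \overline{\overline\rho_{E,p}(g)}$ for $g \in G_K$ this pins $\overline\rho_{E,p}(G_K)$ into the conjugation-fixed scalars $\F_p^\times$. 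The endgame is also right: since $d$ is squarefree with $p \nmid d$, the product $-d\,p^*$ (where $p^* = (-1)^{(p-1)/2}p$) is a squarefree integer times a sign and not a square, so $K \cap \Q(\mu_p) = \Q$, $\chi|_{G_K}$ has order $p-1$, which a character of the form $\psi^2$ (valued in the index-two subgroup of squares) cannot achieve for odd $p$. Two points deserve an explicit sentence in a final write-up: first, that complex conjugation induces the \emph{nontrivial} automorphism of $\O_K/p\O_K$ --- in the inert case this is precisely where unramifiedness of $p$ is consumed, since for ramified $p$ conjugation would be trivial on the residue field and ``conjugation-fixed equals scalars'' would fail; second, that the endomorphisms are defined over $K$ and that $E[p]$ is free of rank one over $\O_K/p\O_K$, which is legitimate because you first reduced to the maximal order. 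Relative to the paper's outsourcing, your route buys self-containedness (at the modest cost of invoking the class-number-one classification of CM $j$-invariants over $\Q$) and, as you note, it isolates cleanly that the Galois hypothesis on $F$ is used once and only once, to make the relevant involution act centrally --- the step that has no analogue for non-Galois $F$, such as the field generated by a single $p$-division point.
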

\begin{proof}
 This is \cite[Theorem 2]{CMtorsion}.
\end{proof}

\section{Heegner Points} \label{heeg_pts}
If $E$ is an elliptic curve over $\Q$ of conductor $N$, we say that the quadratic imaginary field $K = \Q(\sqrt{D})$ satisfies the Heegner hypothesis for $E$ if each prime $p \mid N$ splits in $K$. If $K$ satisfies the Heegner hypothesis for $E$, then the Heegner point $y_K \in E(K)$ is defined as follows (see \cite{gross-kolyvaginswork} for details). By hypothesis there is an ideal $\mathcal N$ of $\O_K$ such that $\O_K/\mathcal N$ is cyclic of order $N$. Since $\O_K \subset \mathcal N^{-1}$, we have a cyclic $N$-isogeny $\C/\O_K \rightarrow \C/\mathcal N^{-1}$ of elliptic curves with complex multiplication by $\O_K$ and hence a point $x_1 \in X_0(N)$. By the theory of complex multiplication $x_1$ is defined over the Hilbert class field $H$ of $K$. We fix a modular parametrization $\psi : X_0(N) \rightarrow E$ of minimal degree taking $\infty$ to $\O$, which exists by \cite{modularity1} and \cite{modularity2}. As above denote the minimal invariant differential on $E$ by $\omega$. Then $\psi^*(\omega)$ is the differential associated to a newform on $X_0(N)$. We have $\psi^*(\omega) = \alpha \cdot f$ where $f$ is a normalized cusp form and $\alpha$ is some nonzero integer \cite{edix-91} constant. The Manin constant is $c := |\alpha|$ and the Heegner point is $y_K := \Tr_{H/K}(\varphi(x_1)) \in E(K)$. It has been conjectured that $c = 1$ if $E$ is optimal, and this has been verified for $N < 130000$ by Cremona in \cite{manin_const}. Define $I_K := [E(K)_{/\tors} : \Z y_K]$, which we call the Heegner index. Note that sometimes we may denote the Heegner index by $I_D$ to emphasize the dependence $K = \Q(\sqrt{D})$.

Gross, Zagier and Zhang have proven a deep theorem which expresses the first derivative of the $L$-series of $E/K$ at 1 in terms of the canonical height $\hat h$ of the Heegner point $y_K$.
\begin{thm}[Gross-Zagier-Zhang]\label{gzz}
If $K$ satisfies the Heegner hypothesis for $E$, then
$$
L'(E/K, 1) = \frac{2||\omega||^2 \hat h(y_K)}{c^2 \cdot u_K^2 \cdot \sqrt{|\Delta(K)|}},
$$
where $||\omega||^2 = \int_{E(\C)}\omega \wedge \overline{i\omega}$ and the quadratic imaginary number field $K$ has $2u_K$ roots of unity and discriminant $\Delta(K)$.
\end{thm}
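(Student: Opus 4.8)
The plan is to prove the identity by computing each side of the formula independently and then reconciling them. On the analytic side I would extract $L'(E/K,1)$ from a Rankin--Selberg integral, while on the geometric side I would compute $\hat h(y_K)$ through the N\'eron--Tate decomposition of the height pairing into local contributions; the two computations are matched by comparing Fourier coefficients of an associated family of modular forms. Throughout, let $f=\sum_{n\geq 1}a_n q^n$ be the normalized newform attached to $E$, normalized so that $\psi^*\omega = c\cdot 2\pi i\, f(z)\,dz$; then $\|\omega\|^2$ is an explicit multiple of the Petersson norm of $f$, which is precisely how the constants $c^2$ and $\|\omega\|^2$ in the statement will arise.

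For the analytic side I would start from the factorization $L(E/K,s)=L(E/\Q,s)\cdot L(E^d/\Q,s)$ established in Section \ref{qit}, identifying $L(E/\Q,s)=L(f,s)$ and $L(E^d/\Q,s)=L(f\otimes\chi_K,s)$, where $\chi_K$ is the quadratic character cutting out $K$. Under the Heegner hypothesis the functional equation of this product has sign $-1$, forcing $L(E/K,1)=0$ and thereby obliging us to pass to the first derivative. To reach the derivative I would realize $L(E/K,s)$, up to explicit elementary Gamma and zeta factors, as the Rankin--Selberg convolution of $f$ against the weight-one theta series attached to $K$, integrated against a real-analytic Eisenstein series $\mathcal{E}(z,s)$ on $\Gamma_0(N)\backslash\mathcal{H}$. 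Differentiating at the central point $s=1$ and unfolding expresses $L'(E/K,1)$ as a Petersson pairing $\langle f,\Phi\rangle$ with an explicit non-holomorphic kernel $\Phi$ built from $\mathcal{E}'(z,s_0)$, whose holomorphic projection has a $q$-expansion with computable $m$-th coefficient split into an archimedean and a non-archimedean part.

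For the geometric side I would study the generating series $\sum_{m\geq 1}\langle y_K,T_m y_K\rangle\,q^m$ of heights of Hecke translates of the Heegner point, which is again a modular form, so that projection to the $f$-isotypic part recovers $\hat h(y_K)$ times the coefficients $a_m$. Each $\langle y_K,T_m y_K\rangle$ decomposes as a sum of local height pairings. The archimedean pairings I would evaluate via the Green's function on the modular curve at the CM points underlying the Heegner construction, producing Legendre-function sums that should match the archimedean part of $\Phi$. The non-archimedean pairings at a prime $\ell$ I would compute as arithmetic intersection numbers on a regular model of $X_0(N)$: using the moduli interpretation, the Heegner points reduce to CM points whose intersections are counted by embeddings of orders into quaternion algebras ramified at $\ell$, in the manner of Gross's mass-formula computations, with the factor $u_K$ entering through the $2u_K$ roots of unity in $K$.

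The main obstacle is exactly this non-archimedean computation together with the final term-by-term matching. Pinning down the intersection multiplicities demands careful control of supersingular reduction and of the deformation theory of CM points at primes of bad or supersingular reduction, after which one must verify that the resulting local contributions agree coefficient by coefficient with the non-archimedean Fourier coefficients of $\Phi$; this reconciliation is the technical heart of Gross--Zagier. The generality required here---an arbitrary $E/\Q$ of the stated type, with the normalization by the Manin constant $c$ and the discriminant factor $\sqrt{|\Delta(K)|}$---follows the subsequent work of Zhang, who recast the argument on Shimura curves and removed auxiliary hypotheses. Assembling these inputs and tracking every normalization constant so as to obtain precisely the displayed equality is the remaining, and considerable, bookkeeping.
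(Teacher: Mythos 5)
The paper offers no argument here: its ``proof'' is a two-line citation to Gross--Zagier \cite{gz86} (for $D$ odd) and Zhang \cite{zhang04} (in general), and your outline is a faithful summary of precisely those cited proofs---Rankin--Selberg convolution and holomorphic projection of the kernel on the analytic side, the $f$-isotypic projection of the height generating series $\sum_{m\geq 1}\langle y_K, T_m y_K\rangle q^m$ with archimedean Green's functions and quaternionic intersection counts on the geometric side, with Zhang's Shimura-curve reworking removing the auxiliary hypotheses. So you take essentially the same route the paper defers to, at the level of a correct roadmap rather than a self-contained argument; the one caution is the normalization the paper itself flags, namely that its $\hat h$ is the absolute height (half the height used in \cite{gz86}), which is exactly where the factor $2$ in the displayed formula must be confirmed in your final bookkeeping.
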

\begin{proof} Gross and Zagier first proved this in \cite{gz86} when $D$ is odd and Zhang generalized it in \cite{zhang04}. \end{proof}

Note that $u_{\Q(\sqrt{-1})} = 2$, $u_{\Q(\sqrt{-3})} = 3$ and for all other quadratic imaginary fields $K$ we have $u_K = 1$. Often one requires that $D \not \in \{-1,-3\}$, but since there are infinitely many $D$ satisfying the Heegner hypothesis for $E$ if $\ran(E) \leq 1$, this is a minor issue (see the proof of Theorem \ref{big-result}). Note also that the $\hat h$ appearing in the formula as stated here is the absolute height, whereas the one appearing in \cite[Theorem 2.1, p. 311]{gz86} is equal to our $2\hat h$.

We have the following theorem of Kolyvagin:
\begin{thm}\label{kolyvagin_big}
If $y_K$ is nontorsion, then $E(K)$ has rank 1 (hence $I_K < \infty$), $\Sha(K, E)$ is finite and
$$
c_3I_K\Sha(K, E) = 0\text{ and }\#\Sha(K, E) \big| c_4 I_K^2,
$$
where $c_3$ and $c_4$ are positive integers (explicitly defined in \cite{koly90}). The primes dividing $c_4$ are at most 2 and the odd primes $p$ for which $\overline \rho_{E,p}$ is surjective.
\end{thm}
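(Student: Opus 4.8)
The plan is to deduce every assertion from Kolyvagin's Euler system of Heegner points, of which $y_K$ is the bottom class. First I would enlarge the picture to the ring class fields $K_n$ of $K$ of conductor $n$, where $n$ ranges over squarefree products of \emph{Kolyvagin primes}: rational primes $\ell$ inert in $K$, prime to $N$, and satisfying $p^M \mid \gcd(\ell+1, a_\ell)$ for a fixed large modulus $p^M$. For each such $n$ there is a Heegner point $y_n \in E(K_n)$, and these fit into the norm-compatibility relations
\[
\Tr_{K_{n\ell}/K_n} y_{n\ell} = a_\ell \, y_n, \qquad y_{n\ell} \equiv \mathrm{Frob}_\ell \, y_n \pmod{\lambda},
\]
where $\lambda$ is a prime of $K_{n\ell}$ above $\ell$. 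These are exactly the axioms making $\{y_n\}$ an Euler system, and they are established from the theory of complex multiplication and the congruence properties of the modular parametrization.

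Next I would apply Kolyvagin's derivative operators. Writing $G_\ell = \Gal(K_{n\ell}/K_n)$, cyclic of order $\ell+1$ with generator $\sigma_\ell$, set $D_\ell = \sum_{i=1}^{\ell} i\,\sigma_\ell^i$ and $D_n = \prod_{\ell \mid n} D_\ell$. The telescoping identity $(\sigma_\ell - 1)D_\ell = (\ell+1) - \Tr_{G_\ell}$, combined with the norm relations above, shows that the derived point $D_n y_n$ becomes fixed by $\Gal(K_n/K)$ once reduced modulo $p^M E(K_n)$. Descending along the inflation-restriction sequence then produces cohomology classes $c_M(n) \in H^1(K, E[p^M])$. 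The structural facts I would establish are that $c_M(n)$ is unramified away from the primes dividing $nNp$, and that at a prime $\ell \mid n$ the \emph{singular} (ramified) part of its localization is pinned down by the \emph{finite} part of the localization of $c_M(n/\ell)$, which in turn encodes the reduction of $y_{n/\ell}$ modulo $\ell$.

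With the classes in hand, the argument becomes local-global duality. At a Kolyvagin prime $\ell$ the group $H^1(K_\ell, E[p^M])$ splits as a sum of a finite and a singular line, paired perfectly by local Tate duality; the bottom class $c_M(1)$ attached to $y_K$ feeds the finite parts, while the classes $c_M(\ell)$ supply the singular parts. To bound $\Sel_{p^M}(E/K)$ I would take any nonzero Selmer class $s$ and, via the Chebotarev density theorem, choose a Kolyvagin prime $\ell$ whose Frobenius simultaneously detects $s$ and the reduction of the Heegner point; here the surjectivity of $\overline\rho_{E,p}$ is what guarantees that $\Gal(K(E[p^M])/K)$ is large enough both to supply such $\ell$ and to force vanishing of the nuisance cohomology of the image. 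The global reciprocity relation $\sum_v \langle \mathrm{loc}_v s,\, \mathrm{loc}_v c_M(\ell)\rangle = 0$ then converts the singular localization of $c_M(\ell)$ into a constraint on $s$, and iterating over several primes bounds $\#\Sel_{p^M}(E/K)$ in terms of the $p$-adic valuation of the Heegner index $I_K$. Passing to the limit $M \to \infty$ yields $\rank E(K) = 1$, the finiteness of $\Sha(K,E)$, and the numerical divisibility.

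The main obstacle, and the technical core of \cite{koly90}, is the local computation of the previous paragraph: relating the singular part of the localization of the derived class $c_M(\ell)$ to the finite part of $c_M(1)$, and thence to the global index $I_K$, requires the explicit reduction theory of Heegner points together with a careful accounting in the two-dimensional local cohomology. It is precisely this accounting that produces the constants $c_3$ and $c_4$ and isolates the exceptional primes: the Chebotarev step and the cohomological vanishing can fail only when the image of $\overline\rho_{E,p}$ is too small, so $c_4$ is divisible only by $2$, where the $\pm$-eigenspace and complex-conjugation issues obstruct the clean local splitting, and by the odd primes at which $\overline\rho_{E,p}$ is not surjective.
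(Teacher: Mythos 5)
The paper proves this theorem by direct citation---it \emph{is} \cite[Theorem A]{koly90}---and your outline is a faithful pr\'ecis of exactly the Euler-system argument Kolyvagin gives there: Heegner points over ring class fields satisfying the norm and Eichler--Shimura congruence relations, derivative operators $D_n$ producing derived classes in $H^1(K, E[p^M])$ with controlled ramification, and local Tate duality plus a Chebotarev argument (using surjectivity of $\overline\rho_{E,p}$) to bound $\Sel_{p^M}(E/K)$ by the $p$-valuation of the Heegner index, so there is nothing to fault and no divergence from the paper's route. One point in your favor: your characterization of $c_4$---odd divisors are the primes where $\overline\rho_{E,p}$ is \emph{not} surjective---is the correct one, consistent with how the theorem is applied in Theorem \ref{big-result}; the statement as printed says ``surjective,'' evidently a typo.
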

\begin{proof} This is \cite[Theorem A]{koly90}. \end{proof}

\begin{cor}
If $y_K$ is nontorsion, then $\Sha(\Q, E)$ and $\Sha(\Q, E^D)$ are finite and have orders whose odd parts divide $c_4I_K^2$.
\end{cor}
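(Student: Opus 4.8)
The plan is to deduce the corollary from Kolyvagin's Theorem~\ref{kolyvagin_big} by pushing the integrality statement over $K$ down to $\Q$ via the quadratic twist machinery of Section~\ref{qit}. By hypothesis $y_K$ is nontorsion, so Theorem~\ref{kolyvagin_big} already gives that $E(K)$ has rank $1$, that $\Sha(K,E)$ is finite, and that $\#\Sha(K,E)$ divides $c_4 I_K^2$ with the odd primes dividing $c_4$ being exactly the odd $p$ for which $\overline\rho_{E,p}$ is surjective. The first step is to invoke Lemma~\ref{qit_sha}: since $\Sha(K,E)$ is finite, we have, up to a factor of $2$, the factorization $\#\Sha(K,E) = \#\Sha(\Q,E)\cdot\#\Sha(\Q,E^D)$. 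In particular both $\Sha(\Q,E)$ and $\Sha(\Q,E^D)$ are finite (each is a subquotient of the finite group $\Sha(K,E)$ up to the controlled $2$-group discrepancy), which settles the finiteness half of the claim.

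For the divisibility of the odd parts, I would combine the two displayed relations. Writing $\#\Sha(K,E)$ and $\#\Sha(\Q,E)\cdot\#\Sha(\Q,E^D)$ as equal up to a power of $2$, and using $\#\Sha(K,E)\mid c_4 I_K^2$ from Theorem~\ref{kolyvagin_big}, one gets that the odd part of $\#\Sha(\Q,E)\cdot\#\Sha(\Q,E^D)$ divides the odd part of $c_4 I_K^2$. Since $\#\Sha(\Q,E)$ divides this product, its odd part a fortiori divides $c_4 I_K^2$; the same argument applies to $\#\Sha(\Q,E^D)$. The point is that passing to odd parts annihilates the ambiguous factor of $2$ introduced by Lemma~\ref{qit_sha}, so the odd-part divisibility transfers cleanly from $K$ down to each of $\Q$ and the twist.

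The main obstacle, and the step to be careful about, is bookkeeping the powers of $2$: the identification in Lemma~\ref{qit_sha} is only valid up to a finite $2$-group, and Theorem~\ref{kolyvagin_big} itself admits $2$ as a possible prime of $c_4$. Consequently no honest statement about the $2$-primary parts survives, which is precisely why the corollary restricts to \emph{odd} parts. I would therefore phrase the deduction entirely in terms of $\ord_\ell$ for odd primes $\ell$, noting that for each such $\ell$ one has $\ord_\ell(\#\Sha(\Q,E)) \le \ord_\ell(\#\Sha(K,E)) \le \ord_\ell(c_4 I_K^2)$, the first inequality because $\Sha(\Q,E)$ injects (up to $2$-torsion) as an eigenspace summand of $\Sha(K,E)$ and the second from Kolyvagin. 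The only subtlety beyond this is confirming that the rank-$1$ hypothesis needed to make sense of $I_K = [E(K)_{/\tors}:\Z y_K]$ is supplied automatically by Theorem~\ref{kolyvagin_big}, so that $I_K$ is a well-defined finite index; this is immediate once $y_K$ is known to be nontorsion.
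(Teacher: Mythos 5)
Your proposal is correct and follows essentially the same route as the paper: the paper's entire proof is to combine Kolyvagin's bound $\#\Sha(K,E)\mid c_4 I_K^2$ from Theorem~\ref{kolyvagin_big} with Lemma~\ref{qit_sha}, which gives $\#\Sha(\Q,E)\cdot\#\Sha(\Q,E^D)$ dividing $\#\Sha(K,E)$ up to a power of $2$, exactly as you do. Your explicit $\ord_\ell$ bookkeeping for odd primes $\ell$ and the remark on finiteness via the eigenspace identification are just a more careful spelling-out of the same one-line argument.
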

\begin{proof} By Lemma \ref{qit_sha} we have that $\#\Sha(\Q, E)\cdot\#\Sha(\Q, E^D)$ divides $\#\Sha(K, E)$ up to a power of two. \end{proof}

\begin{thm} \label{big-result}
If $\ran(E) \leq 1$, then $y_K$ is nontorsion. In particular,
$$
r(E) = \ran(E),
$$
$\Sha(\Q, E)$ is finite, and if $p$ is an odd prime unramified in the CM field such that $\overline \rho_{E,p}$ is surjective, then
$$
\ord_p(\#\Sha(\Q, E)) \leq 2 \cdot \ord_p(I_K).
$$
\end{thm}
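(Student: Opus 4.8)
The plan is to transfer the known consequences of a Heegner point over a suitable imaginary quadratic field $K$ down to $\Q$ via the quadratic-twist dictionary of Section \ref{qit}. \emph{First} I would produce the field $K$. Using the factorization $L(E/K,s) = L(E/\Q,s)L(E^D/\Q,s)$ one has $\ran(E/K) = \ran(E/\Q) + \ran(E^D/\Q)$, and the Heegner hypothesis forces the sign of the functional equation of $E/K$ to be $-1$. I want a squarefree $D<0$ with $K=\Q(\sqrt D)$ satisfying the Heegner hypothesis and $\ran(E/K)=1$; concretely this means $\ran(E^D/\Q) = 1-\ran(E/\Q)$. The analytic nonvanishing theorems for quadratic twists guarantee infinitely many such $D$, and since the supply is infinite I may in addition require that the given prime $p$ be unramified in $K$, i.e. $p\nmid D$. \emph{Then} Theorem \ref{gzz} applies: $\ran(E/K)=1$ gives $L'(E/K,1)\neq 0$, so $\hat h(y_K)\neq 0$ and $y_K$ is nontorsion, which is the first assertion.

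\emph{Next} I would extract the rank and finiteness statements. Since $y_K$ is nontorsion, Theorem \ref{kolyvagin_big} gives that $E(K)$ has rank exactly $1$ and that $\Sha(K,E)$ is finite; the corollary following it then yields finiteness of $\Sha(\Q,E)$ (and of $\Sha(\Q,E^D)$). For the rank equality I would use Lemma \ref{qit_mw}, under which $E(\Q)=E(K)^+$ and $E^D(\Q)\cong E(K)^-$, so that $\rank E(\Q) + \rank E^D(\Q) = \rank E(K) = 1$. The Gross--Zagier eigenvalue computation shows $y_K^\sigma \equiv \epsilon\, y_K$ modulo torsion for a sign $\epsilon$ governed by the functional equation of $E/\Q$; combined with the relation $w(E^D/\Q)=-w(E/\Q)$ coming from $w(E/K)=-1$, this concentrates the rank in $E(\Q)$ exactly when $\ran(E/\Q)=1$ and in $E^D(\Q)$ exactly when $\ran(E/\Q)=0$. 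Either way $\rank E(\Q) = \ran(E/\Q)$.

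\emph{Finally} the bound. Theorem \ref{kolyvagin_big} gives $\#\Sha(K,E)\mid c_4 I_K^2$ together with control on the primes dividing $c_4$; the hypotheses that $p$ be odd, unramified in the CM field, and have $\overline\rho_{E,p}$ surjective are exactly what guarantee $p\nmid c_4$, so that $\ord_p(\#\Sha(K,E))\le 2\,\ord_p(I_K)$ for such $p$. By Lemma \ref{qit_sha}, up to a power of $2$ one has $\#\Sha(K,E) = \#\Sha(\Q,E)\cdot\#\Sha(\Q,E^D)$; since $p$ is odd this yields $\ord_p(\#\Sha(\Q,E)) \le \ord_p(\#\Sha(K,E)) \le 2\,\ord_p(I_K)$, as claimed.

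The main obstacle is the \emph{first} step: securing the auxiliary field $K$. All the algebraic machinery (Theorems \ref{gzz} and \ref{kolyvagin_big} and the twisting Lemmas \ref{qit_mw} and \ref{qit_sha}) is already in place, but it only fires once one knows that $\ran(E/K)=1$ is achievable by a Heegner $K$ with $p\nmid D$. This rests on analytic nonvanishing of quadratic twists --- in the $\ran(E/\Q)=1$ case nonvanishing of the central value $L(E^D/\Q,1)$, and in the $\ran(E/\Q)=0$ case nonvanishing of the derivative $L'(E^D/\Q,1)$ --- and one must check that these can be arranged compatibly with the Heegner hypothesis while avoiding the finitely many bad $D$ (including $D\in\{-1,-3\}$, where the extra roots of unity in Theorem \ref{gzz} intervene). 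A secondary subtlety is pinning down the correct eigenspace of $y_K$, so as to obtain $r(E)=\ran(E)$ rather than merely the split identity $\rank E(\Q)+\rank E^D(\Q)=1$.
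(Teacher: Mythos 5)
Your proposal follows essentially the same route as the paper's proof (which follows Darmon's exposition): Waldspurger in the $\ran(E)=1$ case and Bump--Friedberg--Hoffstein/Murty--Murty in the $\ran(E)=0$ case produce a Heegner field $K$ with $\ran(E/K)=1$, Gross--Zagier--Zhang (Theorem \ref{gzz}) makes $y_K$ nontorsion, and Kolyvagin's Theorem \ref{kolyvagin_big} together with Lemmas \ref{qit_mw} and \ref{qit_sha} gives the rank identity (via the eigenspace of $y_K$, as you say) and the $p$-part bound exactly as in the corollary the paper invokes. One small correction that does not affect the argument: ``the CM field'' in the statement means the field of complex multiplication of $E$ (a hypothesis controlling the primes dividing Kolyvagin's $c_4$), not the auxiliary field $K=\Q(\sqrt{D})$, so your added requirement $p\nmid D$ is harmless but unnecessary.
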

\begin{proof}
We follow the proof given in \cite{darm-rat}. If $\varepsilon = -1$ (i.e., $\ran(E) = 1$), then a result of Waldspurger (see \cite{wald-85}) implies that there are infinitely many $D < 0$ such that $K = \Q(\sqrt{D})$ satisfies the Heegner hypothesis for $E$ and $\ran(E^D) = 0$. If $\varepsilon = 1$  (i.e., $\ran(E) = 0$), then results of Bump, Friedberg and Hoffstein (see \cite{bump-friedberg-hoffstein-90}) or independently results of Murty and Murty (see \cite{murty-murty}) imply that there are infinitely many $D < 0$ such that $K = \Q(\sqrt{D})$ satisfies the Heegner hypothesis for $E$. In this case, for parity reasons, $L(E^D/\Q, 1)$ is always 0.

We have that
$$
\ord_{s=1}L(E/K,s) = \ord_{s=1}L(E/\Q,s) + \ord_{s=1}L(E^D/\Q,s),
$$
which implies that in either case $\ran(E/K) = 1$ which, by the Gross-Zagier-Zhang formula (Theorem \ref{gzz}), implies that $y_K$ is nontorsion. Then Kolyvagin's theorem implies that $E(K)$ has rank 1, $I_K < \infty$ and that $\Sha(K, E)$ is finite.

By Lemma \ref{qit_mw}, we have $$\rank(E(K)) = \rank(E(\Q)) + \rank(E^D(\Q)).$$ The point $y_K$ belongs to $E(\Q)$ (up to torsion) if and only if $\varepsilon = -1.$ If $\varepsilon = -1$, then $\rank(E(\Q)) = 1$ since $y_K \in E(\Q)_{/\tors}$. If $\varepsilon = 1$, then some multiple of $y_K$ is in $E(K)^-$, which implies that $\rank(E^D(\Q)) = 1$, hence $\rank(E(\Q)) = 0$.
\end{proof}

\begin{thm}\label{big_CM_thm}
Suppose $E$ has CM by the full ring of integers $\O_K$.
\begin{enumerate}
\item If $\ran(E) = 0$, then $\BSD(E/\Q, p)$ is true for $p \geq 5$.
\item If $\ran(E) = 1$, then:
\begin{enumerate}
\item If $p \geq 3$ is split, then $\BSD(E/\Q,p)$ is true.
\item If $p \geq 5$ is inert and $p$ is a prime of good reduction for $E$, then
$$
\ord_p(\#\Sha(\Q,E)) \leq 2 \cdot \ord_p(I),
$$
where $I = I_{\Q(\sqrt{D})}$ is any Heegner index for $D < -4$ satisfying the Heegner hypothesis.
\end{enumerate}
\end{enumerate}
\end{thm}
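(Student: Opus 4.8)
The plan is to recognize each of the three assertions as a specialization of a result already established, so that the work is mainly a matter of matching hypotheses rather than proving anything new. Part (1) should be immediate from Corollary~\ref{CM_zero}: with $\ran(E)=0$ and CM by $\O_K$ (hence in particular by $K$), that corollary already yields $\BSD(E/\Q,p)$ for every $p\geq 5$. Part (2a) is likewise immediate from Theorem~\ref{CM_rubin}(3): a curve over $\Q$ with CM by the maximal order $\O_K$ and $\ran(E/\Q)=1$ satisfies $\BSD(E/\Q,p)$ for every odd prime splitting in $K$, and any $p\geq 3$ splitting in $K$ is such a prime.

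The substantive part is (2b), where the strategy is to produce the surjectivity demanded by the Kolyvagin bound of Theorem~\ref{big-result} out of the inertness of $p$ in the CM field. Since $\#\O_K^\times\in\{2,4,6\}$, the hypothesis $p\geq 5$ gives $p\nmid\#\O_K^\times$; combined with the good-reduction hypothesis and inertness (which also makes $p$ unramified in $K$), Proposition~\ref{CM_surjective} shows that $\overline\rho_{E,p}$ is surjective. I would then fix a $D<-4$ with $\Q(\sqrt D)$ satisfying the Heegner hypothesis. If the corresponding Heegner point is torsion, then $E(\Q(\sqrt D))$ nevertheless has positive rank, since $\ran(E)=1$ forces $\rank E(\Q)=1$ by Theorem~\ref{big-result}, so $I=\infty$ and the asserted inequality is vacuous; otherwise the Heegner point is nontorsion, and since $p$ is odd, unramified in the CM field, and $\overline\rho_{E,p}$ is surjective, Theorem~\ref{big-result} delivers $\ord_p(\#\Sha(\Q,E))\leq 2\ord_p(I)$ at once.

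The step I expect to demand the most care is the bookkeeping between the two quadratic fields in play: the CM field $K$, in which $p$ is inert and from which surjectivity of $\overline\rho_{E,p}$ is extracted via Proposition~\ref{CM_surjective}, versus the Heegner field $\Q(\sqrt D)$, which supplies the index $I$ and the Kolyvagin bound. One must confirm that the surjectivity obtained from the CM field is exactly the hypothesis that Theorem~\ref{big-result} consumes, independently of the choice of Heegner field, and that restricting to $D<-4$ (which forces $u_{\Q(\sqrt D)}=1$ by excluding $D=-1,-3$) costs nothing, since the existence argument underlying Theorem~\ref{big-result} leaves infinitely many admissible $D$. Everything else is a direct citation.
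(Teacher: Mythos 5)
Your proposal is correct and matches the paper's proof exactly: part (1) is cited to Corollary~\ref{CM_zero}, part (2a) to Theorem~\ref{CM_rubin}(3), and part (2b) combines Proposition~\ref{CM_surjective} (surjectivity of $\overline\rho_{E,p}$, using $p\geq 5$, good reduction, and inertness) with the Kolyvagin bound of Theorem~\ref{big-result}. Your extra case distinction for a torsion Heegner point is harmless but unnecessary, since Theorem~\ref{big-result} already asserts $y_K$ is nontorsion whenever $\ran(E)\leq 1$.
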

\begin{proof}
Part (1) is Corollary \ref{CM_zero}. Part (2a) is part (3) of Theorem \ref{CM_rubin}. Part (2b) is obtained from Proposition \ref{CM_surjective} by Theorem \ref{big-result}.
\end{proof}

We now describe an algorithm for computing the Mordell-Weil and Shafarevich-Tate groups when the analytic rank of $E/\Q$ is bounded above by one. In the next section we will make this more explicit, with the aim of developing a practical procedure for verifying the Birch and Swinnerton-Dyer conjecture for a specific elliptic curve.
\begin{lem} \label{height-bound}
If $B > 0$ is such that $S = \{P \in E(\Q) : \hat h(P) \leq B\}$ contains a set of generators for $E(\Q)/2E(\Q)$ then $S$ generates $E(\Q)$.
\end{lem}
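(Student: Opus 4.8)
The plan is to run an infinite descent on the canonical height, exploiting that $\hat h$ is a positive definite quadratic form on $E(\Q)/E(\Q)_\tors$: it satisfies $\hat h(nP)=n^2\hat h(P)$ and the parallelogram law $\hat h(P+Q)+\hat h(P-Q)=2\hat h(P)+2\hat h(Q)$, it vanishes exactly on torsion, and for every bound the set of points below it is finite. Write $\langle P,Q\rangle=\tfrac12\big(\hat h(P+Q)-\hat h(P)-\hat h(Q)\big)$ for the associated pairing and set $H=\langle S\rangle$. Since torsion points have height $0\le B$ and, more generally, every point of height $\le B$ already lies in $S$, we have $E(\Q)_\tors\subseteq H$ and $\{P:\hat h(P)\le B\}\subseteq H$; and because $S$ contains generators of $E(\Q)/2E(\Q)$, the subgroup $H$ surjects onto $E(\Q)/2E(\Q)$, i.e. $H+2E(\Q)=E(\Q)$. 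The goal is then exactly $H=E(\Q)$.

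The descent engine is as follows. Given $Q$ with $\hat h(Q)>B$, suppose we can find $P\in S$ with $Q\equiv P\pmod{2E(\Q)}$; since $S$ is symmetric ($\hat h(-P)=\hat h(P)$) we may choose the sign of $P$ so that $\langle Q,P\rangle\ge 0$. Writing $Q=P+2R$ and using the quadratic identity
$$4\hat h(R)=\hat h(Q-P)=\hat h(Q)-2\langle Q,P\rangle+\hat h(P)\le \hat h(Q)+\hat h(P)\le \hat h(Q)+B<2\hat h(Q),$$
we obtain $\hat h(R)<\tfrac12\hat h(Q)$, a strict decrease. As $Q=P+2R$ with $P\in S\subseteq H$, membership of $Q$ in $H$ is equivalent to that of $R$; iterating and invoking the finiteness of points below any bound, the heights strictly decrease until we reach a point of height $\le B$, which lies in $S\subseteq H$. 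Reassembling then shows $Q\in H$, so $H=E(\Q)$.

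The one gap here, and the step I expect to be the main obstacle, is the assumption that the class of $Q$ in $E(\Q)/2E(\Q)$ is represented by a single element of $S$, i.e. by a point of height $\le B$. The hypothesis supplies only \emph{generators}, so a priori a class is represented by a sum $s_1+\cdots+s_k$ of generators whose height need not be $\le B$ (nor $<\hat h(Q)$), and then the displayed inequality no longer forces a decrease. To close this I would argue by minimal counterexample: if $H\ne E(\Q)$, pick $Q\notin H$ of least height $\mu$ (so $\mu>B$). Because $H+2E(\Q)=E(\Q)$ forces $[E(\Q):H]$ to be odd, one gets $H\cap 2E(\Q)=2H$, and minimality yields the near-orthogonality estimate $|\langle Q,h\rangle|\le\tfrac12\hat h(h)$ for every $h\in H$ (equivalently $\hat h(Q-h)\ge\hat h(Q)$ for all $h\in H$, so that $0$ is a closest point of $H$ to $Q$). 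The crux is to show this is incompatible with $H$ containing \emph{all} points of height $\le B$: the presence of every short vector should force $H$ to fill out enough of the Mordell--Weil lattice that no nonzero class can stay this far from $H$. Making this final incompatibility precise---essentially a reduction-theory statement about the sublattice $H$ and its Voronoi cell---is where the real work lies; the height bookkeeping in the first two paragraphs is routine given the parallelogram law.
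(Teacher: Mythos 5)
You are right to be suspicious of your last paragraph: the gap you flag there is genuine, and the repair you sketch cannot be carried out with the tools you allow yourself. Note first that the paper gives no argument of its own --- its proof is the citation to Cremona \cite[\S3.5]{cremona-algs} --- and the argument there is exactly the descent of your first two paragraphs, run under the hypothesis that every coset of $2E(\Q)$ in $E(\Q)$ has a representative in $S$, not merely that $S$ generates the quotient. Under that reading your write-up is already a complete proof: pick $P\in S$ with $Q\equiv P\pmod{2E(\Q)}$, flip the sign of $P$ so $\langle Q,P\rangle\ge 0$ (allowed, since $-P\equiv P$), apply the parallelogram law to get $\hat h(R)<\tfrac12\hat h(Q)$, and invoke Northcott finiteness to terminate. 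Your bookkeeping there, including $[E(\Q):H]$ odd and $H\cap 2E(\Q)=2H$, is correct.

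The trouble is that the ``final incompatibility'' you hope to prove is false at the level of positive-definite lattices, which is the only structure your argument uses ($\hat h$ quadratic, parallelogram law, finiteness below any bound, odd index). Take $L=\Z^{10}+\Z c\subset\R^{10}$ with $c=\tfrac13(1,\dots,1)$ and the Euclidean form, and $B=1$. Every vector in the two nontrivial cosets of $\Z^{10}$ in $L$ has all coordinates in $\pm\tfrac13+\Z$, hence squared norm at least $\tfrac{10}{9}>1$; so the set $S$ of all vectors of norm at most $1$ is $\{0,\pm e_1,\dots,\pm e_{10}\}$, which generates $H=\Z^{10}$, a proper sublattice of odd index $3$. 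Since the index is odd, $H+2L=L$, so $S$ does generate $L/2L$; yet $\langle S\rangle\ne L$, and indeed $0$ is a closest point of $H$ to $c$, so your near-orthogonality condition is satisfied rather than contradicted. Hence no reduction-theoretic statement about $H$ and its Voronoi cell of the kind you want is true in general, and any proof of the lemma under the literal ``generators'' reading would need special (unavailable) properties of Mordell--Weil lattices. For this paper the issue is harmless: in rank $0$ the group is torsion and $S=E(\Q)$, and in rank $1$, if some odd multiple $kP$ of a generator $P$ of $E(\Q)_{/\tors}$ lies in $S$ then $\hat h(P)\le k^2\hat h(P)\le B$, so $S$ automatically contains representatives of every class and your descent applies verbatim.
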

\begin{proof} See \cite[\S3.5]{cremona-algs}.
\end{proof}

\begin{thm}
If $\ran(E) \leq 1$, then there are algorithms to compute both the Mordell-Weil group $E(\Q)$ and the Shafarevich-Tate group $\Sha(\Q, E)$.
\end{thm}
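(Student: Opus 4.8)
The plan is to treat $E(\Q)$ and $\Sha(\Q,E)$ in turn, reducing the Mordell--Weil computation to a bounded point search and the Shafarevich--Tate computation to a bounded sequence of descents whose termination is forced by the explicit Kolyvagin bound of Theorem \ref{kolyvagin_big}.

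\emph{Computing $E(\Q)$.} I would first compute $E(\Q)_\tors$ by reduction at primes of good reduction, the possibilities being limited by Mazur's theorem. Since $\ran(E) \le 1$ by hypothesis, the exact rational number $L(E/\Q,1)/\Omega(E)$ produced by modular symbols decides whether $\ran(E)$ is $0$ or $1$, and by Theorem \ref{big-result} this equals the rank $r$. If $r = 0$ then $E(\Q) = E(\Q)_\tors$. If $r = 1$, choose an imaginary quadratic field $K = \Q(\sqrt{D})$ as in the proof of Theorem \ref{big-result}, so that $K$ satisfies the Heegner hypothesis, $\ran(E^D) = 0$, and $y_K$ is nontorsion and lies in $E(\Q)$ modulo torsion. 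The Gross--Zagier--Zhang formula (Theorem \ref{gzz}) expresses $\hat h(y_K)$ through the computable quantities $L'(E/\Q,1)\,L(E^D/\Q,1)$, $\|\omega\|^2$, the Manin constant, and $\Delta(K)$. Writing $y_K = I_K P_0$ modulo torsion for a generator $P_0$ of $E(\Q)_{/\tors}$ gives $\hat h(P_0) = \hat h(y_K)/I_K^2 \le \hat h(y_K)$, so by the standard comparison of canonical and naive heights the set $\{P \in E(\Q) : \hat h(P) \le \hat h(y_K)\}$ is finite and effectively enumerable. It contains $P_0$ and $E(\Q)_\tors$, hence a set of generators for $E(\Q)/2E(\Q)$, so Lemma \ref{height-bound} shows it generates $E(\Q)$; the point of least positive height generates the free part.

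\emph{Computing $\Sha(\Q,E)$.} In the rank $1$ case the data above already give the Heegner index $I_K = [E(K)_{/\tors} : \Z y_K]$, obtained from a bounded search over $K$ up to height $\hat h(y_K)$; in the rank $0$ case I would first run the same Heegner construction over a field $K$ for which parity forces $\ran(E^D) = 1$, again extracting $\hat h(y_K)$ and $I_K$. Kolyvagin's theorem then gives that $\Sha(K,E)$ is finite with $\#\Sha(K,E) \mid c_4 I_K^2$ for an explicitly computable $c_4$; via Lemma \ref{qit_sha} this bounds the odd part of $\#\Sha(\Q,E)$, producing for each odd prime $p$ an explicit $u_p \ge \ord_p \#\Sha(\Q,E)$ that vanishes for all but the finitely many $p \mid c_4 I_K^2$. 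For each such $p$ I would carry out successive $p^n$-descents: the group $\Sel^{(p^n)}(E/\Q)$ is computable, and since $E(\Q)$ is known the contribution of $E(\Q)/p^nE(\Q)$ is known, so the quotient yields $\#\Sha(\Q,E)[p^n]$ exactly, while taking $n \ge u_p$ forces $\Sha(\Q,E)[p^n] = \Sha(\Q,E)(p)$. Multiplying the resulting local orders over the finitely many contributing primes gives $\#\Sha(\Q,E)$. (For curves with complex multiplication one may instead invoke the bounds of Rubin's theorem, Theorem \ref{CM_rubin}.)

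\emph{The main obstacle.} The prime $p = 2$ is the one not covered cleanly, since the divisibility $\#\Sha(K,E) \mid c_4 I_K^2$ bounds only the odd part once one passes through Lemma \ref{qit_sha}, which is itself exact merely up to a factor of $2$. To obtain an explicit $u_2$ one must bound the finite $2$-groups $H^1(K/\Q,E(K))$ and $H^1(K,E)/(H^1(K,E)^+ + H^1(K,E)^-)$ from that lemma together with $\ord_2(c_4)$, in terms of the now-known ranks and $2$-torsion; this yields a cruder but still effective bound, so $2$-power descent terminates as well. Making these $2$-adic estimates and the higher $p^n$-descents genuinely practical is the real work, and is precisely what the later sections supply --- notably the $2$-descent computations behind Theorem \ref{p_2} and the analysis of the reducible representations in Section \ref{reducible}.
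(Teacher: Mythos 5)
Your proposal is correct, and your treatment of $\Sha(\Q,E)$ is essentially the paper's: Kolyvagin's theorem, fed by a computed Heegner index, yields an explicit finite list of candidate primes, and successive $p^k$-descents, compared against the known Mordell--Weil contribution, determine each $p$-primary part. Two differences are worth noting. First, for the Mordell--Weil group the paper takes a genuinely different (and more uniform) route: since $r=\ran(E)$ is known a priori, a $2$-descent is guaranteed to terminate and determines $E(\Q)/2E(\Q)$, after which Lemma \ref{height-bound} bounds the point search; this works identically in rank $0$ and rank $1$ and needs no Heegner machinery at the Mordell--Weil stage. Your route instead bounds $\hat h(P_0)$ by $\hat h(y_K)$ via Gross--Zagier--Zhang and searches directly, which is valid (since $y_K \equiv I_K P_0$ modulo torsion with $I_K \ge 1$) and avoids Selmer-group computation for this step, but it makes the Mordell--Weil computation depend on a rigorously certified value of $\hat h(y_K)$ --- in particular on the Manin constant $c$ appearing in Theorem \ref{gzz}, which is computable in principle (by comparing the period lattice of $E$ with that of the newform) but which the paper's $2$-descent route never touches. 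Second, your stopping rule for the descents requires an a priori exponent bound $u_p$, whereas the paper stops as soon as $\Sha(\Q,E)[p^k]=\Sha(\Q,E)[p^{k+1}]$, which needs only the finiteness of $\Sha(\Q,E)(p)$ and so sidesteps most of the $2$-adic bookkeeping you route through Lemma \ref{qit_sha}; that said, your explicit bounding of $H^1(K/\Q,E(K))$ to get an effective bound at $p=2$ legitimately fills in a detail the paper glosses over when it asserts that Kolyvagin gives an explicit bound $B$ on the full order of $\#\Sha(\Q,E)$, the $2$-part included.
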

\begin{proof}
In general 2-descent is not known to terminate, but in this case $r = \ran(E)$ is known. Therefore 2-descent will determine $E(\Q)/2E(\Q)$. Then we can search for points up to the maximum height of points in $E(\Q)/2E(\Q)$ and by Lemma \ref{height-bound} we will find a set of generators for $E(\Q)$.

To compute $\Sha(\Q, E)$, note that Kolyvagin's theorem gives an explicit upper bound $B$ for $\#\Sha(\Q, E)$. For primes $p$ dividing this upper bound, we can (in theory at least) perform successive $p^k$-descents for $k = 1, 2, 3, ...$ to compute $\Sha(\Q, E)[p^k]$. As soon as $\Sha(\Q, E)[p^k] = \Sha(\Q, E)[p^{k+1}]$ we have $\Sha(\Q, E)[p^k] = \Sha(\Q, E)[p^\infty]$ and can move on to the next prime. Once we do this for each prime we have $\Sha(\Q, E) = \bigoplus_{p \mid B}\Sha(\Q, E)[p^\infty]$.
\end{proof}

For $\ran(E) \leq 1$, we can (at least in theory) compute $\#\Sha(\Q, E)_\text{an}$ exactly, as first described in \cite[p. 312]{gz86}. Together with the previous theorem, this shows that the BSD formula for $E$ can be proven for specific elliptic curves via computation.

The main ingredient to applying Kolyvagin's work to a specific elliptic curve $E$ of analytic rank at most 1 is to compute the Heegner index $I_K = [E(K)_{/\tors}:\Z \overline{y_K}]$, where $K = \Q(\sqrt{D})$ satisfies the Heegner hypothesis for $E$ and $y_K \in E(K)$ is a Heegner point (and $\overline{y_K}$ is its image in $E(K)_{/\tors}$). Let $z \in E(K)$ generate $E(K)_{/\tors}$. 

We can efficiently compute $\hat h(y_K)$ to desired precision using the Gross-Zagier-Zhang formula (Theorem \ref{gzz})), reducing the index calculation to the computation of the height of $z$, since
$$
I_K^2 = \frac{\hat h(y_K)}{\hat h(z)}.
$$
We have the following corollary of Lemma \ref{qit_mw}:
\begin{cor}\label{bounding_heeg_ind}
Suppose $E$ is an elliptic curve of analytic rank 0 or 1 over $\Q$, in particular $\rank(E(\Q)) = \ran(E(\Q))$. Let $D < 0$ be a squarefree integer such that $K = \Q(\sqrt{D})$ satisfies the Heegner hypothesis for $E$.
\begin{enumerate}
\item If we have $\ran(F(\Q)) = 1$, where $F$ is one of $E$ or $E^D$, and if $x \in F(\Q)$ generates $F(\Q)_{/\tors}$, then
$$
I_K = \begin{cases}
\sqrt{\frac{\hat h(y_K)}{\hat h(x)}}, & \frac12x \not \in F(K),\\
2\sqrt{\frac{\hat h(y_K)}{\hat h(x)}}, & \frac12x \in F(K).
\end{cases}
$$
\item Suppose $\ran(E(\Q)) = 0$. If $E(\Q)[2] = 0$ then let $A = 1$, otherwise let $A = 4$. Let $C = C(E^D/\Q)$ denote the Cremona-Pricket-Siksek height bound \cite{height-bound}. If there are no nontorsion points $P$ on $E^D(\Q)$ with naive absolute height
$$
h(P) \leq \frac{A\cdot\hat h(y_K)}{M^2} + C,
$$
then
$$
I_K < M.
$$
\end{enumerate}
\end{cor}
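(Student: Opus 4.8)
The plan is to reduce both parts to the identity $I_K^2 = \hat h(y_K)/\hat h(z)$ (stated just above the corollary), where $z$ generates $E(K)_{/\tors}$, and then to compute or bound $\hat h(z)$ in terms of the height of a rational generator on whichever of $E$, $E^D$ has Mordell--Weil rank one. Throughout I use the surrounding theory: by Theorem \ref{big-result}, under the Heegner hypothesis $y_K$ is nontorsion and $E(K)$ has rank exactly $1$, so $\rank E(\Q) + \rank E^D(\Q) = 1$ by Lemma \ref{qit_mw}, and exactly one of $E$, $E^D$ has rank one. I also use that the canonical height is absolute (hence unchanged under base change from $\Q$ to $K$), that the $K$-isomorphism $\varphi : E \to E^D$ preserves it, and that $\hat h(nP) = n^2\hat h(P)$.

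For part (1), let $F$ be the rank-one curve with generator $x$ of $F(\Q)_{/\tors}$, and (using $\varphi$ when $F = E^D$) regard $\Z\bar x = F(\Q)_{/\tors}$ as a subgroup of $E(K)_{/\tors} = \Z\bar z$. Since $E(K)$ has rank $1$, Lemma \ref{qit_mw}(2) bounds $[E(K)_{/\tors} : F(\Q)_{/\tors}]$ by $2^1 = 2$, so $\bar x = m\bar z$ with $m \in \{1,2\}$ and $\hat h(x) = m^2\hat h(z)$. The index is $2$ precisely when $\bar z = \tfrac12\bar x$ in $E(K)_{/\tors}$, i.e.\ when $\tfrac12 x \in F(K)$ (up to torsion), and $1$ otherwise. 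Substituting $\hat h(z) = \hat h(x)/m^2$ into $I_K^2 = \hat h(y_K)/\hat h(z)$ yields $I_K = m\sqrt{\hat h(y_K)/\hat h(x)}$, which is exactly the claimed case distinction.

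For part (2), $\ran(E)=0$ forces $\rank E(\Q) = 0$ and $\rank E^D(\Q) = 1$, so $E^D$ carries the rational generator. I argue by contraposition: suppose $I_K \geq M$, so $\hat h(z) = \hat h(y_K)/I_K^2 \leq \hat h(y_K)/M^2$. Let $x$ generate $E^D(\Q)_{/\tors}$. If $E(\Q)[2] = 0$, Lemma \ref{qit_mw}(3) identifies $E(K)_{/\tors}$ with $E^D(\Q)_{/\tors}$, giving $\hat h(x) = \hat h(z)$, so $A = 1$ works; if $E(\Q)[2] \neq 0$, only the index bound $[E(K)_{/\tors} : E^D(\Q)_{/\tors}] \leq 2$ of Lemma \ref{qit_mw}(2) is available, giving $\hat h(x) \leq 4\hat h(z)$, so I take $A = 4$. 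In either case $\hat h(x) \leq A\,\hat h(y_K)/M^2$, and the defining inequality $h(x) \leq \hat h(x) + C$ of the height bound $C$ (\cite{height-bound}) shows the nontorsion point $x \in E^D(\Q)$ satisfies $h(x) \leq A\,\hat h(y_K)/M^2 + C$. Hence if no such point exists, $I_K < M$.

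The routine content is the quadratic scaling of the canonical height together with its compatibility under $\varphi$ and under base change to $K$; these need care but no new ideas. The one genuine subtlety, and the main thing to get right, is the index factor $A$: the clean identification $E(K)_{/\tors} = E^D(\Q)_{/\tors}$ of Lemma \ref{qit_mw}(3) is available only when $E(\Q)[2] = 0$, and when $2$-torsion is present the index $[E(K)_{/\tors} : E^D(\Q)_{/\tors}]$ can equal $2$, inflating $\hat h(x)$ by up to a factor of $4$. Verifying that $2$ is the only possible nontrivial index (a direct consequence of $\rank E(K) = 1$ in Lemma \ref{qit_mw}(2)) is the crux, and accounting for exactly this ambiguity is what forces $A = 4$ in the $2$-torsion case and produces the two-case formula in part (1).
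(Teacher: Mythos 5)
Your proposal is correct and takes exactly the route the paper intends: the paper offers no written proof, presenting the statement as an immediate corollary of Lemma \ref{qit_mw} combined with the identity $I_K^2 = \hat h(y_K)/\hat h(z)$ displayed just above it, and your argument fills in precisely those implicit details --- part (1) from the index bound $[E(K)_{/\tors} : F(\Q)_{/\tors}] \leq 2^{\rank E(K)} = 2$ of Lemma \ref{qit_mw}(2), and part (2) by contraposition using Lemma \ref{qit_mw}(3) when $E(\Q)[2] = 0$ and the index-$2$ bound (whence $A = 4$) otherwise. You also correctly identify the factor $A$ as the crux, which is exactly the point on which the paper notes it is correcting \cite{bsdalg}.
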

Note that this is a correction to the results stated in \cite{bsdalg}. However, for each case in which \cite{bsdalg} uses this result, the corresponding $A$ is equal to 1. Therefore this mistake does not impact any of the other results there.

If $\rank(E(\Q)) = 1$, then we will have a generator $x$ from the rank verification, and we can simply check whether $\frac12x$ is in $E(K)$ and use part 1 of the corollary. If $\rank(E(\Q)) = 0$ then we may not so easily find a generator of the twist, because a point search may very well fail since the conductor of $E^D$ is $D^2N$. However, a failed point search can still be useful as long as we search sufficiently hard, because of part 2 of the corollary.

\section{Bounding the order of $\Sha(\Q,E)$}\label{comp-sha}

Suppose $\ran(E) \leq 1$ for $E/\Q$ and that $K$ is a quadratic imaginary field satisfying the Heegner hypothesis for $E$. We have already seen that for analytic rank zero curves $\BSD(E, p)$ is true for primes $p > 3$ if $E$ has complex multiplication. Otherwise we have the following theorem:
\begin{thm} \label{kato}
Suppose $E$ is an optimal non-CM curve, and let $p$ be a prime such that $p \nmid 6 N$ and $\rho_{E,p}$ is surjective. If $\ran(E) = 0$ then $\Sha(\Q,E)$ is finite and
$$
\ord_p(\#\Sha(\Q,E)) \leq \ord_p\left(\frac{L(E/\Q,1)}{\Omega(E)}\right).
$$
\end{thm}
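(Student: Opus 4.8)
\emph{Approach.} This is a theorem of Kato, and I would obtain it from his Euler system for the Galois representation on the Tate module of $E$ together with the resulting divisibility in the cyclotomic Iwasawa main conjecture, descended to $\Q$. Finiteness is not really the issue: since $\ran(E) = 0$, Theorem \ref{big-result} already gives $r(E) = 0$ and the finiteness of $\Sha(\Q, E)$. The real content is the $p$-adic inequality, so the plan is to bound the $p$-part of the Selmer group over $\Q$ and then identify that Selmer group with $\Sha(\Q,E)(p)$.

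\emph{The main input.} The hypotheses are exactly those under which Kato's construction applies: $p \nmid 6N$ forces $p \geq 5$ and good reduction at $p$, while surjectivity of $\rho_{E,p}$ makes $E[p]$ an irreducible $G_\Q$-module with $E(\Q)[p] = 0$. I would take as a black box Kato's divisibility in the main conjecture. Writing $\Lambda = \Z_p[[\Gal(\Q_\infty/\Q)]]$ for the cyclotomic Iwasawa algebra, the Pontryagin dual of $\Sel_{p^\infty}(E/\Q_\infty)$ is a torsion $\Lambda$-module whose characteristic ideal divides the ideal generated by the $p$-adic $L$-function $L_p(E)$.

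\emph{Descent and comparison of $L$-values.} I would then specialize at the trivial character by Mazur's control theorem. Because $\ran(E) = 0$ makes $E(\Q)$ finite, the Kummer descent sequence collapses to an isomorphism $\Sel_{p^\infty}(E/\Q) \cong \Sha(\Q,E)(p)$, and the $\Lambda$-module divisibility specializes to
$$
\ord_p\big(\#\Sha(\Q,E)(p)\big) \leq \ord_p\big(L_p(E)(0)\big),
$$
where $L_p(E)(0)$ is the value at the trivial character. Finally, the interpolation formula defining $L_p(E)$ writes $L_p(E)(0)$ as a modified Euler factor at $p$ times the (rational) algebraic part $L(E/\Q,1)/\Omega(E)$; one checks that under $p \nmid 6N$ this factor is a $p$-adic unit, so that $\ord_p(L_p(E)(0)) = \ord_p(L(E/\Q,1)/\Omega(E))$, and the stated bound follows on combining the last two facts with $\ord_p(\#\Sha(\Q,E)) = \ord_p(\#\Sha(\Q,E)(p))$.

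\emph{Main obstacle.} Beyond invoking Kato's divisibility itself, the delicate point is the descent. Mazur's control theorem compares the specialization of the Iwasawa module to $\Sel_{p^\infty}(E/\Q)$ only up to local error terms at $p$ and at the primes $\ell \mid N$, so one must check that surjectivity of $\rho_{E,p}$ together with $p \nmid 6N$ kills the $p$-part of each of them (the torsion contribution $E(\Q)[p]$, the Tamagawa factors, and the local term at $p$). Equivalently, verifying that the Euler factor at $p$ is a genuine $p$-adic unit in every reduction type, both ordinary and supersingular, is the one computation where the precise hypotheses are indispensable, and it is what prevents the inequality from picking up spurious factors.
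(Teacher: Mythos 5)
Your high-level route is the standard one behind this theorem --- the paper itself offers no independent argument, simply citing Kato's Euler system \cite{kato04} together with a result of Matsuno \cite{matsuno03}, as outlined in \cite[\S 4]{bsdalg} --- but as written your descent has a genuine gap at supersingular primes. Your framework assumes that the Pontryagin dual of $\Sel_{p^\infty}(E/\Q_\infty)$ is a torsion $\Lambda$-module whose characteristic ideal divides $(L_p(E))$, and that one can specialize via Mazur's control theorem. Both assumptions require \emph{ordinary} reduction at $p$: when $E$ is supersingular at $p$ (perfectly possible under $p \nmid 6N$), the classical Selmer group over $\Q_\infty$ has positive $\Lambda$-corank, the Mazur--Tate--Teitelbaum $p$-adic $L$-function attached to a root $\alpha$ of $x^2 - a_px + p$ has unbounded coefficients and does not lie in $\Lambda$, and Mazur's control theorem is unavailable. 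This is exactly why the paper's proof invokes Matsuno's result in addition to Kato's divisibility: the supersingular descent needs a genuinely different mechanism (in modern language, one supplied by Kobayashi's $\pm$-theory or by Matsuno/Perrin-Riou--style arguments), and your proposal flags this case in its last sentence but supplies no input for it.

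There is also a concrete error in the ordinary case: the interpolation factor at the trivial character is $(1-\alpha^{-1})^2$ with $\alpha$ the unit root, and since $\#\tilde E(\F_p) = (1-\alpha)(1-\beta)$ with $1-\beta$ a unit, one has $\ord_p(1-\alpha^{-1}) = \ord_p(\#\tilde E(\F_p))$. This is \emph{not} zero at anomalous primes $a_p \equiv 1 \pmod p$, which do occur with $p \nmid 6N$ and $\overline\rho_{E,p}$ surjective, so your claim that the Euler factor is always a $p$-adic unit under the stated hypotheses fails, and with it the naive derivation of the inequality (the non-unit factor sits on the upper-bound side, so left uncancelled it would \emph{weaken} the bound). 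The correct bookkeeping uses the precise specialization formula for the characteristic ideal when $\Sel_{p^\infty}(E/\Q)$ is finite: the evaluation at the trivial character equals $\#\Sha(\Q,E)(p)$ multiplied by $\bigl(\#\tilde E(\F_p)(p)\bigr)^2$ and the $p$-parts of the Tamagawa numbers (torsion vanishing since $E(\Q)[p]=0$ by surjectivity), and the $\bigl(\#\tilde E(\F_p)(p)\bigr)^2$ term cancels $(1-\alpha^{-1})^2$ exactly, while the Tamagawa terms enter with the favorable sign. So the theorem survives, but by cancellation, not by the unit computation you propose.
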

\begin{proof} As outlined in \cite[\S 4]{bsdalg}, this is due to Kato's Euler system \cite{kato04} together with a result of Matsuno \cite{matsuno03}. \end{proof}
As a corollary to this theorem $\BSD(E, p)$ is true for primes $p > 3$ of good reduction where $E[p]$ is surjective and $p$ does not divide $\#\Sha(\Q,E)_\text{an}$. Under certain technnical conditions on $p$ (explained in \cite{grigorov_phd}), Grigorov has proven the bound on the other side:
$$
\ord_p(\#\Sha(\Q,E)) = \ord_p\left(\frac{L(E/\Q,1)}{\Omega(E)}\right).
$$

Because Theorem \ref{kato} often eliminates most of the primes $p > 3$, one often does not need to compute the Heegner index for rank zero curves. However, if there is a bad prime $p > 3$ such that $\overline\rho_{E,p}$ is surjective then Theorem \ref{kato} does not apply and descents are in general not feasible. For example, this happens with the pair $(E,p) = (\text{2900d1}, 5)$. Interestingly $\#\Sha(\Q,E) = 25$ in this case (this will be proven in Section \ref{proofs}). Theorem \ref{big-result} still gives an upper bound in this case, provided we have some kind of bound on the Heegner index. In the example above the methods of Section \ref{heeg_pts} show that $I_K \leq 23$, implying that $\ord_5(I_K) \leq 1$ and hence $\ord_5(\#\Sha(\Q,E))\leq 2$.

The following theorems give alternate hypotheses under which Kolyvagin's machinery still gives the same result. These should be viewed as extensions of Theorem \ref{big-result}.
\begin{thm} \label{cha}
If $\ran(E/\Q) \leq 1$ and $p$ is a prime such that $p \nmid 2\cdot \Delta(K)$, $p^2 \nmid N$ and $\overline \rho_{E,p}$ is irreducible then
$$
\ord_p(\#\Sha(\Q, E)) \leq 2 \cdot \ord_p(I_K).
$$
\end{thm}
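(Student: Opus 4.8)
The plan is to reduce the statement to a bound over $K$ and then to run Kolyvagin's Euler system descent under the weaker hypothesis of irreducibility in place of surjectivity. First I would record the standing consequences of the hypotheses. Since $K$ satisfies the Heegner hypothesis and $\ran(E/\Q)\leq 1$, the argument of Theorem \ref{big-result} shows that $\ran(E/K)=1$, that the Heegner point $y_K$ is nontorsion, that $E(K)$ has rank one, and that $\Sha(K,E)$ is finite; in particular $I_K<\infty$. Moreover $p\nmid 2\Delta(K)$ forces $p$ to be odd and unramified in $K$. By Lemma \ref{qit_sha} the orders $\#\Sha(K,E)$ and $\#\Sha(\Q,E)\cdot\#\Sha(\Q,E^D)$ agree up to a power of $2$, so for the odd prime $p$ it suffices to prove
$$
\ord_p(\#\Sha(K,E)) \leq 2\ord_p(I_K),
$$
and the asserted bound for $\Sha(\Q,E)$ follows at once.

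The heart of the matter is to obtain this $K$-bound without assuming $\overline\rho_{E,p}$ surjective. In Theorem \ref{big-result} surjectivity is exactly what strips the auxiliary constant from Kolyvagin's bound (Theorem \ref{kolyvagin_big}) at $p$ and lets the descent run cleanly. To reach the same conclusion under mere irreducibility I would re-examine the two roles surjectivity plays: it guarantees a plentiful supply of Kolyvagin primes $\ell$ with $\overline\rho_{E,p}(\mathrm{Frob}_\ell)$ of the required shape, and, more essentially, it forces the vanishing of the Galois cohomology groups $H^1(\Gal(K(E[p^M])/K),E[p^M])$ on which the descent relies to pass from the derived Heegner classes to a bound on the $p^M$-Selmer group. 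For the first role, irreducibility of $\overline\rho_{E,p}$ already supplies enough Kolyvagin primes through the Chebotarev density theorem.

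The second role is the genuinely new input, and here the remaining hypotheses are consumed. The required vanishing is precisely the object analyzed by Cha: the conditions that $\overline\rho_{E,p}$ be irreducible, that $p\nmid 2\Delta(K)$, and that $p^2\nmid N$ (so that $E$ has good or multiplicative reduction at $p$) are exactly those under which one proves $H^1(\Gal(\Q(E[p])/\Q),E[p])=0$ together with its analogue over $K$. Granting this vanishing, I would run Kolyvagin's descent verbatim: the derived classes attached to Heegner points of conductor a squarefree product of Kolyvagin primes satisfy the usual local ramification conditions, and the vanishing lets them control $\Sel_{p^M}(K,E)$ with no extraneous constant, yielding the displayed inequality. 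Combined with the reduction of the first paragraph this proves the theorem.

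The main obstacle is exactly the cohomological vanishing invoked above, which is where the argument departs from Theorem \ref{big-result}. Under surjectivity the vanishing is automatic for $p\geq 5$, but under mere irreducibility it can fail for small or exceptional images, and determining precisely when it holds under the arithmetic side-conditions $p\nmid 2\Delta(K)$ and $p^2\nmid N$ is the substantive content I would import from Cha's analysis rather than reprove.
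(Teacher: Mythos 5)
Your proposal takes essentially the same approach as the paper: the paper's entire proof of Theorem \ref{cha} is the citation ``See \cite{cha03,cha05}'', and your outline---reducing via Lemma \ref{qit_sha} to a bound for $\ord_p(\#\Sha(K,E))$ and rerunning Kolyvagin's descent with surjectivity replaced by the vanishing of $H^1(\Gal(K(E[p^M])/K),E[p^M])$ under the hypotheses $p \nmid 2\Delta(K)$, $p^2 \nmid N$, and irreducibility---is an accurate description of precisely what Cha's cited work establishes. Since you explicitly import the cohomological vanishing from Cha rather than reprove it, your argument is correct and matches the paper's proof in substance.
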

\begin{proof} See \cite{cha03,cha05}. \end{proof}

\begin{thm}\label{stein-et-al}
Suppose $\ran(E/\Q) \leq 1$ and $E$ is non-CM and suppose $p$ is an odd prime which does not divide $\#E'(\Q)_\tors$ for any $E'$ which is $\Q$-isogenous to $E$. If $\Delta(K)$ is divisible by exactly one prime, further suppose that $p \nmid \Delta(K)$. Then
$$
\ord_p(\#\Sha(\Q, E)) \leq 2 \cdot \ord_p(I_K).
$$
\end{thm}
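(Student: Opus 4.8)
The plan is to show that Kolyvagin's Euler system bound applies at $p$ under these weaker hypotheses, viewing the theorem as a refinement of Theorem \ref{big-result} in which the surjectivity of $\overline\rho_{E,p}$ is replaced by the condition on $p$-torsion throughout the isogeny class. By Theorem \ref{big-result} the assumption $\ran(E/\Q)\leq 1$ already guarantees that $y_K$ is nontorsion, that $E(K)$ has rank one, and that $\Sha(K,E)$ is finite with $\#\Sha(K,E)\mid c_4 I_K^2$ by Theorem \ref{kolyvagin_big}; moreover Lemma \ref{qit_sha} gives, for odd $p$, the inequality $\ord_p(\#\Sha(\Q,E))\leq\ord_p(\#\Sha(K,E))$. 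Everything therefore reduces to showing that the $p$-part of the Kolyvagin constant $c_4$ is trivial, i.e.\ that the derivative classes produced by the Euler system cut the $p$-Selmer group down to the expected size. In the case that $\overline\rho_{E,p}$ is surjective this is essentially Theorem \ref{big-result}, so the work is in the non-surjective case.

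First I would isolate exactly where Kolyvagin's descent uses the image of $\overline\rho_{E,p}$. Surjectivity enters in two places: to force the vanishing of $H^1(\Gal(\Q(E[p])/\Q),E[p])$, so that restriction to $\Q(E[p])$ is injective on the relevant classes, and to produce Kolyvagin primes $\ell$ whose Frobenius realizes the element needed to detect a given Selmer class. I would show both survive under the present hypotheses. The condition that $p\nmid\#E'(\Q)_\tors$ for every $E'$ in the isogeny class is equivalent to saying that $\overline\rho_{E,p}$ admits no trivial subrepresentation on any isogenous curve, and this is precisely what is needed to force the above $H^1$ to vanish and to keep the Chebotarev argument for the Kolyvagin primes nondegenerate.

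The argument then splits on the image of $\overline\rho_{E,p}$. When $\overline\rho_{E,p}$ is irreducible the bound follows by the method behind Theorem \ref{cha}, adapted so as not to require $p^2\nmid N$; here the auxiliary hypothesis that $p\nmid\Delta(K)$ whenever $\Delta(K)$ has a single prime factor is what controls the local behavior at the ramified prime of $K$. The genuinely new case is when $\overline\rho_{E,p}$ is reducible, so that after semisimplification $\overline\rho_{E,p}\sim\chi_1\oplus\chi_2$ with $\chi_1\chi_2=\chi_{\mathrm{cyc}}$ the mod-$p$ cyclotomic character. Applying the torsion hypothesis to $E$ itself forces $\chi_1\neq 1$ and $\chi_2\neq 1$, while applying it to the curve obtained by quotienting by the Galois-stable line of order $p$ forces the complementary characters to be nontrivial as well; together these rule out $\chi_i\in\{1,\chi_{\mathrm{cyc}}\}$ and yield the cohomological vanishing needed to run the derivative-class argument in the reducible setting.

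The main obstacle is this reducible case: with only upper-triangular image the Euler system classes can a priori be annihilated by the failure of local conditions at $p$ and at primes dividing $\Delta(K)$, and one must verify that the constraints $\chi_1,\chi_2\notin\{1,\chi_{\mathrm{cyc}}\}$ together with $p\nmid\Delta(K)$ in the one-prime case are exactly enough for the global duality computation to give $\ord_p(c_4)=0$. Checking the local Tate pairings and the Poitou--Tate bookkeeping in this degenerate situation, rather than in the clean large-image case, is where the real work lies.
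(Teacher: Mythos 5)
You should first know what the paper itself does here: its entire ``proof'' is the citation \emph{See \cite[Thm.\ 3.5]{bsdalg}}, so the benchmark for your blind attempt is whether it could stand in for the argument given there (which in turn rests on Kolyvagin's machinery together with the Galois-cohomology vanishing theorems of Cha \cite{cha03,cha05}). Your framing is the right one, and several ingredients are correct: the reduction via Theorem \ref{kolyvagin_big} and Lemma \ref{qit_sha} to controlling the $p$-part of Kolyvagin's constant; the observation that $p \nmid \#E'(\Q)_\tors$ for all $\Q$-isogenous $E'$ means no isogenous curve's mod-$p$ representation has a trivial subrepresentation; and the character bookkeeping in the reducible case, where applying the hypothesis to $E$ and to the quotient by the stable line rules out $\chi_i \in \{1, \chi_{\mathrm{cyc}}\}$ (using $\chi_1\chi_2 = \chi_{\mathrm{cyc}}$). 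This matches the shape of the cited proof.

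But there is a genuine gap, and you name it yourself: you write that one ``must verify'' that these character constraints suffice for the duality computation to give a trivial Kolyvagin constant, and that this ``is where the real work lies.'' That deferred step \emph{is} the theorem. In particular, your assertion that the isogeny-class torsion hypothesis is ``precisely what is needed'' to force $H^1(\Gal(K(E[p])/K), E[p]) = 0$ and to keep the Chebotarev selection of Kolyvagin primes nondegenerate is not a formality one can wave at: proving that vanishing under exactly this hypothesis (allowing reducible, non-surjective image, with no condition like $p^2 \nmid N$) is the substantive content of Cha's work that \cite{bsdalg} invokes, and it is delicate --- there do exist curves with nonvanishing $H^1(\Q(E[p])/\Q, E[p])$, all tied to $p$-isogeny/torsion structure, so the dividing line must actually be established. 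You have also misdiagnosed the role of the hypothesis on $\Delta(K)$: it is not about ``local behavior at the ramified prime of $K$'' but about a global degeneracy --- when $\Delta(K)$ has a single prime divisor equal to $p$, the field $K$ can sit inside $\Q(E[p])$ (via the quadratic subfield of $\Q(\zeta_p)$), which breaks the requirement $K \cap \Q(E[p]) = \Q$ needed for complex conjugation and the Kolyvagin-prime argument to behave; the hypothesis $p \nmid \Delta(K)$ in that case exists to exclude exactly this. So while your plan points in the same direction as the proof the paper cites, as written it is a program with its decisive step unexecuted and one auxiliary hypothesis misattributed, not a proof.
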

\begin{proof} See \cite[Thm. 3.5]{bsdalg}. \end{proof}

Jetchev \cite{jetchev_m_max} has improved the upper bound with the following:
\begin{thm}[Jetchev] \label{jetchev}
If the hypotheses of any of Theorems \ref{big-result}, \ref{cha} or \ref{stein-et-al} apply to $p$, then
$$
\ord_p(\#\Sha(\Q, E)) \leq 2 \cdot \left(\ord_p(I_K) - \max_{q\mid N} \ord_p(c_q)\right).
$$
If $p$ divides at most one Tamagawa number then this upper bound is equal to $\ord_p(\#\Sha(\Q,E)_\text{an})$.
\end{thm}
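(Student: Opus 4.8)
The plan is to refine the cohomological argument underlying Theorems \ref{big-result}, \ref{cha} and \ref{stein-et-al}, all of which rest on Kolyvagin's Euler system of Heegner points, and then to read off the equality with $\#\Sha(\Q,E)_\text{an}$ from the Gross-Zagier-Zhang formula. Since any of the cited hypotheses already places us in a situation where Kolyvagin's derived classes bound the $p$-part of $\Sha$, I would first recall that construction: for squarefree products $n$ of Kolyvagin primes $\ell$ (inert in $K$ with $p \mid \ell+1$ and $p \mid a_\ell$) one forms derivative classes $d(n) \in H^1(K, E[p^M])$, unramified away from $n$ and the bad primes, whose ramified part at each $\ell \mid n$ is controlled by the reduction of the Heegner point of level $n/\ell$. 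The global reciprocity law (vanishing of the sum of local invariants of a cup product) converts this into the estimate $\ord_p(\#\Sha(K,E)) \leq 2\ord_p(I_K)$, and Lemma \ref{qit_sha} descends it to $\Q$ for odd $p$.

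The new ingredient, following Jetchev, is a local analysis at the bad primes $q \mid N$, each of which splits in $K$ by the Heegner hypothesis. I would examine the localization $\mathrm{loc}_q(d(n))$ in $H^1(K_\mathfrak{q}, E[p^M])$: because $E$ has bad (additive or multiplicative) reduction at $q$, the image of the local points factors through the group of components, whose order is the Tamagawa number $c_q$. Concretely, the part of the local cohomology against which the Selmer condition is imposed contains a subgroup of order $p^{\ord_p(c_q)}$ coming from $\Phi_q(k_\mathfrak{q})$, and one shows the Kolyvagin classes are forced into a correspondingly deeper level of $p$-divisibility there. Feeding this local gain into the same reciprocity and linear-algebra argument, localizing at the single bad prime $q$ realizing $\max_{q\mid N}\ord_p(c_q)$, upgrades the estimate to $\ord_p(\#\Sha(K,E)) \leq 2(\ord_p(I_K) - \max_{q\mid N}\ord_p(c_q))$, which again descends to $\Q$ by Lemma \ref{qit_sha}. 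I expect this local step to be the main obstacle: one must check that the component-group contribution at $q$ is genuinely transmitted to the global bound rather than cancelled against the other local terms, and that working with a single optimal $q$ (the source of the $\max$, rather than a sum over all $q$) is exactly what the global duality permits.

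For the final assertion I would compute $\ord_p(\#\Sha(\Q,E)_\text{an})$ outright. Writing $z$ for a generator of $E(K)_{/\tors}$, the relation $\hat h(y_K) = I_K^2\,\hat h(z)$ together with Theorem \ref{gzz} expresses $L'(E/K,1)/\hat h(z)$ as $2||\omega||^2 I_K^2 / (c^2 u_K^2 \sqrt{|\Delta(K)|})$, where $c$ is the Manin constant. Substituting this into the Birch-Swinnerton-Dyer formula for $E/K$ makes the regulator cancel and leaves, after taking $\ord_p$ with $p$ odd and prime to the torsion, the periods, $\Delta(K)$ and the Manin constant,
\[
\ord_p(\#\Sha(K,E)_\text{an}) = 2\ord_p(I_K) - \ord_p\Big(\prod_v c_v(E/K)\Big).
\]
Since every $q \mid N$ splits in $K$, so that $K_\mathfrak{q} \cong \Q_q$ above it, each such $q$ contributes two local factors equal to $c_q(E/\Q)$, and the right-hand side becomes $2\ord_p(I_K) - 2\sum_{q\mid N}\ord_p(c_q)$. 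Under the hypothesis that $p$ divides at most one Tamagawa number the sum collapses to $\max_{q\mid N}\ord_p(c_q)$, matching the refined upper bound exactly. Descending from $K$ to $\Q$ through the analytic analogue of Lemma \ref{qit_sha} then identifies this quantity with $\ord_p(\#\Sha(\Q,E)_\text{an})$; the only remaining care is the bookkeeping of the period and Tamagawa factors under the quadratic twist, for which Lemmas \ref{qit_mw}, \ref{qit_sha} and \ref{omega_relation} supply precisely the comparisons needed.
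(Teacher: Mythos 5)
The paper offers no proof of Theorem \ref{jetchev} at all: it is imported verbatim from Jetchev's paper \cite{jetchev_m_max}, so the only fair comparison is with Jetchev's own argument. Your sketch does capture its overall architecture (Kolyvagin derived classes, refined local conditions at the bad primes --- all split in $K$ by the Heegner hypothesis --- with the component group $\Phi_q$ of order $c_q$ supplying the gain, and a single optimal $q$ rather than a sum). But the first part of your proposal is a plan, not a proof: the sentence ``one shows the Kolyvagin classes are forced into a correspondingly deeper level of $p$-divisibility there'' asserts exactly the theorem's content. You never specify the modified local condition at $q$, nor verify that global duality transmits the local gain rather than cancelling it --- and you yourself flag this as the main obstacle. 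That step is the entire substance of \cite{jetchev_m_max}, so as written the inequality is assumed rather than derived.

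The second part contains a concrete mathematical gap. Your computation via Theorem \ref{gzz} and the BSD formula over $K$ correctly yields, for suitable odd $p$, $\ord_p(\#\Sha(K,E)_\text{an}) = 2\ord_p(I_K) - 2\sum_{q \mid N}\ord_p(c_q)$, and the collapse of the sum to the max under the at-most-one-Tamagawa hypothesis is fine. But the factorization $L(E/K,s) = L(E/\Q,s)\cdot L(E^D/\Q,s)$ together with Lemma \ref{omega_relation} gives $\ord_p(\#\Sha(K,E)_\text{an}) = \ord_p(\#\Sha(\Q,E)_\text{an}) + \ord_p(\#\Sha(\Q,E^D)_\text{an})$, so what you have actually shown is that the refined upper bound equals the \emph{sum} of the two analytic orders over $\Q$. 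Your final appeal to ``the analytic analogue of Lemma \ref{qit_sha}'' silently assumes $\ord_p(\#\Sha(\Q,E^D)_\text{an}) = 0$, which is not automatic for an arbitrary Heegner field $K$ satisfying the stated hypotheses and is nowhere addressed; without it the claimed identification with $\ord_p(\#\Sha(\Q,E)_\text{an})$ does not follow. To close this you would need either to justify a choice of discriminant $D$ for which the twist's analytic $\Sha$ is prime to $p$, or to state (as Jetchev effectively does) the sharpness assertion over $K$ rather than over $\Q$.
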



There is also an algorithm of Stein and Wuthrich based on the work of Kato, Perrin-Riou and Schneider (a preprint is available at \cite{shark} and the algorithm is implemented in Sage \cite{Sage}). Suppose that the elliptic curve $E$ and the prime $p \neq 2$ are such that $E$ does not have additive reduction at $p$ and the image of $\overline \rho_{E,p}$ is either equal to the full group $\GL_2(\F_p)$ or is contained in a Borel subgroup of $\GL_2(\F_p)$. (In $\GL_2(\F_p)$ these are subgroups which are conjugate to the group of upper triangular matrices. See \cite[Section 21]{humphreys} for more details.) These conditions hold for all but finitely many $p$ if $E$ does not have complex multiplication. Given a pair $(E, p)$ satisfying this hypothesis, the algorithm either gives an upper bound for $\#\Sha(\Q,E)[p^\infty]$ or terminates with an error. In the case that $\ran(E) \leq 1$, an error only happens when the $p$-adic height pairing can not be shown to be nondegenerate. For curves of conductor up to 5000 and of rank 0 or 1 this never happens for those $p$ considered. Note that it is a standard conjecture that the $p$-adic height pairing is nondegenerate, and if this is true for a particular case, it can be shown via a computation.

There are also techniques for bounding the order of $\Sha(\Q,E)$ from below. In \cite{crem-maz-vis}, Cremona and Mazur establish a method for visualizing pieces of $\Sha(\Q,E)$ as pieces of Mordell-Weil groups via modular congruences, which is fully explained in the appendix of \cite{agashe-stein-vis}. They have also carried out computations for curves of conductor up to 5500, which are listed in \cite{crem-maz-vis}. In addition, Stein established a method for doing this for abelian varieties as part of his Ph.D. thesis \cite{stein-phd}.

\section{Examples} \label{examples}
The following examples are not only useful in illustrating the preceding discussion, but will also be needed to prove the main results of this note. We begin by proving that several mod-$p$ Galois representations are surjective, where the elliptic curve has complex multiplication. This will allow us to use Theorem \ref{big-result} for these curve-prime pairs in Section \ref{proofs}.

\begin{example} \label{CM_rk1_5}
 Let $p = 5$. \begin{align*}
  \text{675a1}  & : y^2 + y = x^3 + 31\\
  \text{900c1}  & : y^2 = x^3 + 100\\
  \text{2700h1} & : y^2 = x^3 + 625\\
  \text{2700l1} & : y^2 = x^3 + 5\\
  \text{2700p1} & : y^2 = x^3 + 500\\
  \text{3600bd1}& : y^2 = x^3 - 100
 \end{align*}
 \begin{enumerate}
  \item Let $E$ be the curve 675a, which has complex multiplication by the full ring of integers of $K = \Q(\sqrt{-3})$ (which is generated over $\Z$ by $\alpha = (\sqrt{-3}+1)/2$). After a choice of basis for $E[p]$, we find that $\alpha$ acts on $E[p] \cong (\Z/p\Z)^2$ via the matrix $$M = \left(\begin{array}{rr} 4 & 3 \\ 4 & 2 \end{array}\right).$$
  The centralizer of $M$ in $\GL_2(\Z/p\Z)$ is of order $p^2 - 1$, whence $$\#\Aut_R(E[p]) = 24.$$

  Let $f(x)$ be the $p$-division polynomial of $E$, and let $g(y)$ be the resultant with respect to $x$ of $f(x)$ and the defining polynomial of $E$, noting that the roots of $f$ are the $x$-coordinates of the points of $E[p]$ and those of $g$ are the $y$-coordinates. Over $\Q$, $f(x)$ is irreducible of degree 12 and $g(y) = g_1(y)^3$, where $g_1(y)$ is irreducible of degree 8. If $K_x = \Q[x]/(f(x))$ and $K_y = \Q[y]/(g_1(y))$, then $f$ factors into four linear factors and four quadratic ones over the compositum $K_x \cdot K_y$ and $g_1$ into four linear factors and two quadratic ones. One can verify that $K \not\subset K_x \cdot K_y$ and so $K_x \cdot K_y \cdot K$, which has degree 48, is a subfield of $\Q(E[p])$ by Theorem \ref{CMtorsion}. On the one side since $\Gal(K(E[p])/K)$ is a subgroup of $\Aut_R(E[p])$ we have $[K(E[p]) : K] \mid 24$, while on the other side since $K_x \cdot K_y \cdot K \subseteq \Q(E[p]) \subseteq K(E[p])$ we have $24 \mid [K(E[p]) : K]$. Therefore $\#\Gal(K(E[p])/K) = 24$ and hence $\overline \rho_{E,p}$ is surjective.

  If $E$ is one of the curves 900c, 2700h, 2700l or 2700p, then we have the same $K$, the same factoring patterns, the same conjugacy class in $\GL_2(\F_p)$ and in each of these cases $K \not\subset K_x \cdot K_y$. Therefore for each of these, $\overline \rho_{E,p}$ is surjective.

  \item Let $E$ be the curve 3600bd, which also has the same $K$. The matrix is now $$M = \left(\begin{array}{rr} 4 & 3 \\ 4 & 2 \end{array}\right),$$ which still has centralizer of order 24. Computing the compositum $K_x \cdot K_y$ is difficult in this case, so we argue differently. The intersection of $K_x$ and $K_y$ is degree 4, and one can verify that $K$ is not contained in $K_y$, which is of degree 8. Therefore we can still conclude that $K$ is not contained in $K_x \cdot K_y$. Now we may proceed as above, and conclude that $\overline \rho_{E,p}$ is surjective.
 \end{enumerate}
\end{example}

\begin{example} \label{CM_rk1_7}
 Let $p = 7$. \begin{align*}
  \text{1568g1} & : y^2 = x^3 - 49x\\
  \text{3136t1} & : y^2 = x^3 + 49x\\
  \text{3136u1} & : y^2 = x^3 - 343x\\
  \text{3136v1} & : y^2 = x^3 - 7x
 \end{align*}
 If $E$ is one of the curves 1568g, 3136u or 3136v, then the matrix is $$M = \left(\begin{array}{rr} 0 & 1 \\ 6 & 0 \end{array}\right),$$ whereas if $E$ is the curve 3136t, then the matrix is $$M = \left(\begin{array}{rr} 6 & 2 \\ 6 & 1 \end{array}\right).$$ In all cases the centralizer is order 48, and $K = \Q(i)$. With $f(x)$, $g(y)$ and $K_x$ as in the previous example, $g(y)$ is irreducible of degree 48 in each case (so let $K_y = \Q[y]/(g(y))$), and it is possible to verify that $K \not\subset K_y$. Again, by Theorem \ref{CMtorsion}, we have that $K \cdot K_y \subseteq Q(E[p]) \subseteq K(E[p])$, and that the compositum is degree 96. Therefore $\#\Gal(K(E[p])/K) = 48$ and $\overline \rho_{E,p}$ is surjective.
\end{example}

\begin{example} \label{CM_rk1_11}
 Let $p = 11$. \begin{align*}
  \text{3267d1} & : y^2 + y = x^3 - 333\\
  \text{3872a1} & : y^2 = x^3 + 1331x\\
  \text{4356a1} & : y^2 = x^3 - 44\\
  \text{4356b1} & : y^2 = x^3 + 58564\\
  \text{4356c1} & : y^2 = x^3 - 1331
 \end{align*}
 \begin{enumerate}
  \item If $E$ is one of the curves 3267d, 4356a, 4356b or 4356c, then $K = \Q(\sqrt{-3})$ and the matrices are, respectively, $M_1, M_1, M_2$ and $M_1$, where $$M_1 = \left(\begin{array}{rr} 0 & 1 \\ 10 & 1 \end{array}\right),$$ and $$M_2 = \left(\begin{array}{cc} 10 & 3 \\ 10 & 2 \end{array}\right).$$ In all cases the centralizer is of order 120. With $f(x)$, $g(y)$ and $K_x$ as above, then over $\Q$, $f(x)$ is irreducible of degree 60 and $g(y) = g_1(y)^3$, where $g_1(y)$ is irredcubile of degree 40 (so let $K_y = \Q[y]/(g_1(y))$). We can verify that $f$ and $g_1$ remain irreducible over $K$, and so $[K_x \cdot K : \Q] = 120$ and $[K_y \cdot K : \Q] = 80$. Since the least common multiple is 240, we have $[K_x \cdot K_y \cdot K : \Q] = 240$. Therefore $\#\Gal(K(E[p])/K) = 120$ and $\overline \rho_{E,p}$ is surjective.
  \item If $E$ is 3872a, then $K = \Q(i)$ and the matrix is $$M = \left(\begin{array}{cc} 10 & 2 \\ 10 & 1 \end{array}\right).$$ The centralizer is again of order 120. With $f(x)$, $g(y)$ and $K_x$ as above, then over $\Q$, $f(x)$ is irreducible of degree 60 and $g(y)$ is irreducible of degree 120 (so let $K_y = \Q[y]/(g(y))$). Since $g(y)$ remains irreducible over $K$ we have $[K_y \cdot K : \Q] = 240$. Therefore $\#\Gal(K(E[p])/K) = 120$ and $\overline \rho_{E,p}$ is surjective.
 \end{enumerate}
\end{example}

Schaefer and Stoll have described a way of computing the $p$-Selmer group in \cite{schaefer-stoll}. If $S$ is $\{p\}$ union the set of primes $\ell$ or such that $p$ divides $c_\ell$, then $\Sel^{(p)}(\Q, E)$ corresponds to the subgroup of elements of $H^1(\Q, E; S)$ whose localizations are in the image of the local connecting homomorphisms for each place in $S$. In practice, one computes the $S$-Selmer group $K(S, p)$ of the \'etale algebra $K$ corresponding to a Galois-invariant subset of $E[p] \setminus \{\O\}$ in terms of the class group and $S$-units. Here we give two useful examples of this techinque, which proves that the 5-primary part of $\Sha(\Q,E)$ is trivial.

\begin{example} \label{CM_rk1_5descent} Let $p = 5$.
 Usually five-descents are infeasible due to the number fields involved, e.g. if the mod-5 representation is surjective, the \'etale algebra will be a single number field of degree 24, for which class group and $S$-unit calculations will be too difficult to complete without assuming GRH. However, the following two examples illustrate cases in which a five descent is actually possible without assuming GRH. Here the 5-division polynomial has a factor of degree 4 which corresponds to a Galois invariant spanning subset $X$ of $E[p] \setminus \{\O\}$ of size 8. In each case $g(y)$ is the resultant of this factor and the defining polynomial of $E$, which defines a number field $A_1$.

 \begin{enumerate}
  \item Let $E$ = 225a1. Then we have
  \begin{align*}
  g(y) = &\  y^8 + 4y^7 + 97y^6 + 277y^5 - 80y^4 \\ &\ \ \ \  - 617y^3 - 548y^2 - 194y + 331.
  \end{align*}

  \item Let $E$ = 3600be. Then we have
  \begin{align*}
  g(y) = &\  y^8 - 720000y^6 - 27000000000y^4 \\ &\ \ \ \  + 1458000000000000000.
  \end{align*}
  
 \end{enumerate}

 In both cases the set $S$ is of order one, consisting of the prime above 5, and the dimension of $A_1(S, p)$ is 6. Computations show that the dimension of $A_1(S, p)^{(1)} = \ker(\sigma_g - g)$ (notation again comes from \cite{schaefer-stoll}) is at most 2 in both cases. Since the Selmer group $\Sel^{(5)}(\Q, E)$ is contained in $A_1(S, p)^{(1)}$ it is dimension at most 2, and since the dimension of $E(\Q)/5E(\Q)$ is exactly 1, we have that in these two cases $\#\Sha(\Q, E)[5] \leq 5$, and hence that $\#\Sha(\Q, E)[5] = 1$.


\end{example}

\section{Curves of conductor $N < 5000$, irreducible mod-$p$ representations}\label{proofs}

There are 17314 isogeny classes of elliptic curves of conductor up to 5000. There are 7914 of rank 0, 8811 of rank 1, 589 of rank 2, and none of higher rank. There are only 116 optimal curves which have complex multiplication in this conductor range. Every rank 2 curve in this range has $\#\Sha(\Q,E)_\text{an} = 1$. For any curve $E$ in this range, $\ord_p(\Sha(\Q,E)_\text{an}) \leq 6$ for all primes $p$. If such an $E$ is optimal then $\ord_p(\Sha(\Q,E)_\text{an}) \leq 4$ for all primes $p$.


\begin{thm}\label{p_2}
If $E/\Q$ has conductor $N < 5000$, then $\BSD(E, 2)$ is true.
\end{thm}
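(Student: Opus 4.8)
The assertion is $\BSD(E,2)$ for every $E$ of conductor below $5000$. The plan is first to dispose of parts (1)--(3) of the definition and reduce to the numerical equality in part (4). For curves with $\ran(E)\le 1$ parts (1)--(3) are already known (finiteness of $\Sha$ and $r=\ran(E)$ follow from Theorems \ref{gzz} and \ref{kolyvagin_big}), while for the rank-$2$ classes in this range the equality $r=\ran(E)$ is part of Cremona's verification of the rank conjecture for $N<130000$ \cite{crem-130K}; for these the value $\#\Sha(\Q,E)_\mathrm{an}=1$ recorded above makes part (3) immediate and reduces parts (2) and (4) to showing $\Sha(\Q,E)[2]=0$. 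Finally, since $\BSD(E,2)$ is isogeny invariant I would work only with the optimal curve in each class, where $\ord_2(\#\Sha(\Q,E)_\mathrm{an})\le 4$.

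Thus it remains to verify, curve by curve, that
$$\ord_2(\#\Sha(\Q,E)_\mathrm{an}) = \ord_2(\#\Sha(\Q,E)(2)).$$
The left-hand side $s_2$ is read off the Birch--Swinnerton-Dyer quotient: the leading $L$-coefficient, the real period $\Omega(E)$, the regulator $\Reg(E(\Q))$ (available once generators of $E(\Q)$ are known from the rank computation), the torsion order and the Tamagawa numbers are all effectively computable, and part (3) guarantees the quotient is rational. For the right-hand side I would compute the $2$-primary part of $\Sha$ by descent. A first $2$-descent computes $\Sel^{(2)}(\Q,E)$ and hence, via the exact sequence $0\to E(\Q)/2E(\Q)\to \Sel^{(2)}(\Q,E)\to\Sha(\Q,E)[2]\to 0$ together with the known value of $\rank E(\Q)$, the dimension $\dim_{\F_2}\Sha(\Q,E)[2]$.

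In many cases this already settles the question: if the first descent gives $\Sha(\Q,E)[2]=0$ then $\Sha(\Q,E)(2)=0$ and $s_2=0$ is confirmed (this also establishes part (2) for the rank-$2$ curves). When $\Sha(\Q,E)[2]\ne 0$ the first descent yields only the $2$-rank, while the Cassels--Tate pairing forces $\Sha(\Q,E)(2)\cong\bigoplus_i(\Z/2^{e_i}\Z)^2$. I would then carry out successive $2^k$-descents, computing $\Sha(\Q,E)[2^k]$ from $\Sel^{(2^k)}(\Q,E)$, until $\Sha(\Q,E)[2^k]=\Sha(\Q,E)[2^{k+1}]$, exactly as in the algorithm described earlier; because the analytic order bounds the depth ($\sum_i e_i\le 2$ for the optimal curves here), at worst a $4$-descent is required. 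This pins down $\#\Sha(\Q,E)(2)$ exactly, and one checks $\ord_2(\#\Sha(\Q,E)(2))=s_2$.

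The main obstacle is the higher descents: computing $\Sel^{(4)}(\Q,E)$ requires class-group and $S$-unit computations in the number fields attached to $E[4]$, which can be large, and making these unconditional (independent of GRH) is the delicate point. I would guard against both this subtlety and the usual risk of software error by running several independent implementations of $2$-descent and comparing their output.
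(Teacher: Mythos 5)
Your architecture is the same as the paper's (reduce to the optimal curve in each class by isogeny invariance of $\BSD(E,2)$, then run successive $2^k$-descents against $T(E):=\ord_2(\#\Sha(\Q,E)_\text{an})$, using the fact that $T(E)\le 4$ for optimal curves in this range), but there is one genuine gap: the claim that ``at worst a $4$-descent is required.'' This rests on treating the analytic order as an a priori upper bound for $\#\Sha(\Q,E)(2)$, which is not available at $p=2$: Kato's bound (Theorem \ref{kato}) requires $p\nmid 6N$, and the constant $c_4$ in Kolyvagin's theorem (Theorem \ref{kolyvagin_big}) has an uncontrolled power of $2$, so the inequality $\ord_2(\#\Sha(\Q,E))\le\ord_2(\#\Sha(\Q,E)_\text{an})$ is part of what $\BSD(E,2)$ \emph{asserts}, not something you may assume. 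Concretely, for the curves with $T(E)=4$ a $4$-descent proving $\Sha(\Q,E)[4]\cong(\Z/4\Z)^2$ only establishes the lower bound $\ord_2(\#\Sha(\Q,E))\ge 4$; it does not exclude $\Sha(\Q,E)(2)\supseteq(\Z/8\Z)^2$, which would give $\ord_2(\#\Sha(\Q,E))\ge 6$ and falsify the formula. Indeed your own stopping rule ($\Sha[2^k]=\Sha[2^{k+1}]$) is not yet satisfied at $k=2$ in this situation, so your algorithm, run honestly, contradicts your claimed depth bound.

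The fix is exactly what the paper does: delete the claim that the analytic order bounds the depth of descent and carry the stabilization check one level higher in the hardest case. In the paper's proof, a $2$-descent settles $T(E)=0$ and shows $\Sha[2]\cong(\Z/2\Z)^2$ when $T(E)>0$; a $4$-descent settles $T(E)=2$ (there $\Sha[4]=\Sha[2]$ already forces $\Sha(2)=\Sha[2]$, so order and stabilization come together) and shows $\Sha[4]\cong(\Z/4\Z)^2$ when $T(E)=4$; and for $T(E)=4$ an $8$-descent is then performed to prove $\Sha[8]=\Sha[4]$, whence $\Sha(\Q,E)(2)\cong(\Z/4\Z)^2$ and $\BSD(E,2)$ follows. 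With that correction (and noting that these descents, as implemented via quartics rather than class-group computations in large fields, are unconditional, so the GRH worry you raise for $4$-descent does not arise here), your argument matches the paper's proof.
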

\begin{proof}Assume that $E$ is an optimal curve and let $T(E) = \ord_2(\#\Sha(\Q,E)_\text{an})$. For each curve we are considering, if $T(E) = 0$ then a 2-descent proves $\BSD(E, 2)$ and if $T(E) > 0$ then a 2-descent proves $\Sha(\Q,E)[2] \cong (\Z/2\Z)^2$. If $T(E) = 2$ then a 4-descent proves $\BSD(E, 2)$ and if $T(E) > 2$ then a 4-descent proves $\Sha(\Q,E)[4] \cong (\Z/4\Z)^2$. For the range of curves we are considering $T(E)$ is at most 4 and if $T(E) = 4$, an 8-descent proves that $\Sha(\Q,E)[8] = \Sha(\Q,E)[4]$ and hence proves $\BSD(E, 2)$.
\end{proof}

\begin{thm}\label{p_3}
If $E/\Q$ has conductor $N < 5000$, then $\BSD(E, 3)$ is true.
\end{thm}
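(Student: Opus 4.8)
The plan is to reduce to optimal curves using the isogeny-invariance of $\BSD(E,3)$ \cite{cassels-65}, to compute the rational number $\#\Sha(\Q,E)_\text{an}$ for each such curve (recall that after this reduction $\ord_3(\#\Sha(\Q,E)_\text{an}) \leq 4$ throughout this range), and then to verify $\ord_3(\#\Sha(\Q,E)(3)) = \ord_3(\#\Sha(\Q,E)_\text{an})$ case by case. The cases are organized by whether $E$ has complex multiplication and by the shape of the mod-$3$ representation $\overline\rho_{E,3}$.

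First I would dispose of the CM curves. A rank-$0$ curve with CM by $K \neq \Q(\sqrt{-3})$ is handled by the proposition following Corollary \ref{CM_zero}, since $3 \nmid \prod_p c_p(E)$ in this conductor range; a rank-$1$ curve with $3$ split in $K$ is covered by Theorem \ref{big_CM_thm}(2a). If $E$ has rank $1$ with $3$ inert in $K$ and $K \neq \Q(\sqrt{-3})$, then $3 \nmid \#\O_K^\times$, so Proposition \ref{CM_surjective} shows $\overline\rho_{E,3}$ is surjective and Theorem \ref{big-result} gives $\ord_3(\#\Sha(\Q,E)) \leq 2\ord_3(I_K)$, reducing the matter to a Heegner-index computation. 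Since $3$ ramifies only in $\Q(\sqrt{-3})$ among the relevant CM fields, the only CM curves left are those with $K = \Q(\sqrt{-3})$ (so that $j = 0$ and $3 \mid \#\O_K^\times$, where all the CM theorems exclude $p = 3$); these are finitely many and I would treat them by direct $3$-descent.

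For the non-CM curves the key dichotomy is the reducibility of $\overline\rho_{E,3}$. When $E[3]$ is irreducible, no curve in the isogeny class admits a rational point of order $3$ (irreducibility is isogeny-invariant and precludes any rational $3$-isogeny), so the torsion hypothesis of Theorem \ref{stein-et-al} holds automatically; choosing a Heegner discriminant $D$ with $3 \nmid \Delta(K)$ then yields $\ord_3(\#\Sha(\Q,E)) \leq 2\ord_3(I_K)$ (one may use Theorem \ref{cha} instead when $9 \nmid N$), and Jetchev's refinement (Theorem \ref{jetchev}) sharpens this to $\ord_3(\#\Sha(\Q,E)_\text{an})$ whenever $3$ divides at most one Tamagawa number. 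Here $I_K$ is obtained by evaluating $\hat h(y_K)$ via Gross-Zagier-Zhang (Theorem \ref{gzz}) against the height of a generator of $E(K)_{/\tors}$; for the many curves with $\ord_3(\#\Sha(\Q,E)_\text{an}) = 0$ it suffices to exhibit one $D$ with $3 \nmid I_K$. When $E[3]$ is reducible the curve carries a rational $3$-isogeny, and since Theorem \ref{kato} is unavailable at $p = 3$, I would instead apply the Stein-Wuthrich algorithm \cite{shark, Sage}, whose hypotheses permit exactly the case of Borel image, to bound $\#\Sha(\Q,E)[3^\infty]$ from above; the finitely many curves with additive reduction at $3$, where that algorithm does not apply, are handled by explicit $3$-descent.

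The main obstacle is matching the \emph{lower} bound against $\#\Sha(\Q,E)_\text{an}$ in the cases where $\ord_3(\#\Sha(\Q,E)_\text{an}) > 0$: the Euler-system and $p$-adic $L$-function inputs above all bound $\#\Sha(\Q,E)(3)$ only from above, so for those curves I expect to need a direct $3$-descent computing $\Sha(\Q,E)[3]$, or the visualization technique of Cremona and Mazur \cite{crem-maz-vis}, to supply the reverse inequality and force equality. A secondary difficulty, pervasive precisely because $p = 3$ is small and frequently divides torsion and Tamagawa numbers, is ensuring that for each curve one can find a Heegner discriminant $D$ that simultaneously satisfies the Heegner hypothesis, keeps $3 \nmid \Delta(K)$ and the remaining auxiliary hypotheses in force, and makes the resulting index and descent computations feasible.
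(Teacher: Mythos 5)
Your toolkit is essentially the paper's---3-descent, Heegner indices fed into Theorems \ref{big-result}, \ref{cha}, \ref{stein-et-al} and \ref{jetchev}, the Stein--Wuthrich algorithm, and isogeny invariance---but your routing of the reducible cases contains a step that fails on precisely the two hardest curves. The paper's proof is descent-first: a 3-descent on every optimal curve settles $\BSD(E,3)$ outright whenever $\ord_3(\#\Sha(\Q,E)_\text{an})=0$, and for the 31 curves of Table \ref{nontriv_3sha} with $\ord_3(\#\Sha(\Q,E)_\text{an})=2$ it yields the lower bound $\Sha(\Q,E)[3]\cong(\Z/3\Z)^2$; the matching upper bounds then come from Stein--Wuthrich (20 curves) or surjectivity plus Theorems \ref{big-result} and \ref{jetchev} with the Heegner indices of Table \ref{nontriv_3sha_heegner} (9 curves). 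The last two, 2366d1 and 4914n1, are exactly the reducible-at-3 curves with nontrivial analytic $\Sha$. Since $4914=2\cdot3^3\cdot7\cdot13$, the curve 4914n1 has \emph{additive} reduction at 3, so Stein--Wuthrich is inapplicable there, and your stated fallback for the additive case---``explicit 3-descent''---cannot close it: a 3-descent computes only $\Sha[3]$, so it reproves the lower bound $\Sha[3]\cong(\Z/3\Z)^2$ but cannot exclude elements of order 9, i.e.\ cannot show $\ord_3(\#\Sha(\Q,E)(3))=2$. The same objection applies to your final paragraph in general: wherever the upper-bound machinery fails, ``a direct 3-descent computing $\Sha[3]$'' does not ``force equality''. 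The idea your plan is missing is the paper's use of isogeny invariance in the direction \emph{opposite} to your opening reduction: replace the optimal curves by the isogenous curves 2366d2 and 4914n2, each with $\#\Sha_\text{an}=1$, where a single 3-descent gives $\Sha(\Q,F)[3]=0$, hence $\Sha(\Q,F)(3)=0$ and $\BSD(F,3)$, which transfers back across the isogeny.

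A second omission: Theorem \ref{p_3} quantifies over \emph{all} curves of conductor $N<5000$, including the 589 rank-2 isogeny classes, and every upper-bound result you invoke (Theorems \ref{big-result}, \ref{cha}, \ref{stein-et-al}, \ref{jetchev}, and the rank hypothesis under which the Stein--Wuthrich bound is guaranteed to be produced) assumes $\ran\le 1$; for a rank-2 curve $y_K$ is torsion and there is no Heegner index. The paper covers these silently because every rank-2 curve in the range has $\#\Sha(\Q,E)_\text{an}=1$, so the initial 3-descent ($\Sha[3]=0$, hence $\Sha(3)=0$) already suffices; your plan never routes these curves anywhere and should send them to plain descent explicitly. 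By contrast, your separate CM analysis is correct but unnecessary at $p=3$ (the paper's proof of Theorem \ref{p_3} makes no CM distinction), and note that your inert-at-3 CM branch quietly assumes 3 is a prime of good reduction, as Proposition \ref{CM_surjective} requires.
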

\begin{proof}For optimal curves where $\ord_3(\#\Sha(\Q,E)_\text{an})$ is trivial, a 3-descent suffices. For the rest we have that $\ord_3(\#\Sha(\Q,E)_\text{an}) = 2$, and in this case a 3-descent proves that $\Sha(\Q,E)[3] \cong (\Z/3\Z)^2$. These 31 remaining optimal curves are shown in Table \ref{nontriv_3sha}. If $E$ is in the set \{681b1, 1913b1, 2006e1, 2429b1, 2534e1, 2534f1, 2541d1, 2674b1, 2710c1, 2768c1, 2849a1, 2955b1, 3054a1, 3306b1, 3536h1, 3712j1, 3954c1, 4229a1, 4592f1, 4606b1\}, then the algorithm of Stein and Wuthrich \cite{shark} proves the desired upper bound. For the rest of the curves except for 2366d1 and 4914n1, the mod-3 representations are surjective. Table \ref{nontriv_3sha_heegner} displays selected Heegner indexes in this case, which together with Theorem \ref{big-result} (and Theorem \ref{jetchev} for 4675j1 since $c_{17}(\text{4675j1}) = 3$) proves the desired upper bound.

\begin{table}
 \caption{Optimal $E$ with $\ord_3(\#\Sha(\Q,E)_\text{an}) = 2$}
 \label{nontriv_3sha}
\begin{center}  \begin{tabular}{|l|l|l|l|l|l|l|l|} \hline
681b1 & 2429b1 & 2601h1 & 2768c1 & 3054a1 & 3712j1 & 4229a1 & 4675j1\\
1913b1 & 2534e1 & 2674b1 & 2849a1 & 3306b1 & 3879e1 & 4343b1 & 4914n1\\
2006e1 & 2534f1 & 2710c1 & 2932a1 & 3536h1 & 3933a1 & 4592f1 & 4963c1\\
2366d1 & 2541d1 & 2718d1 & 2955b1 & 3555e1 & 3954c1 & 4606b1 & \\\hline
 \end{tabular}\end{center}
 \end{table} 

\begin{table}
 \caption{Heegner indices where $\ord_3(\#\Sha(\Q,E)_\text{an}) = 2$}
 \label{nontriv_3sha_heegner}
\begin{center}
\begin{tabular}{cc}
  \begin{tabular}{|l|r|r|r|} \hline
$E$ &      $D$ & $I_D$ & $\ord_3(I_D)$\\\hline
2601h1    &-8       &12&1\\
2718d1    &-119    &48&1\\
2932a1    &-31     &3&1\\
3555e1    &-56     &6&1\\
3879e1    &-35     &24&1\\\hline
\end{tabular}
 &
  \begin{tabular}{|l|r|r|r|} \hline
$E$ &      $D$ & $I_D$ & $\ord_3(I_D)$\\\hline
3933a1    &-56     &24&1\\
4343b1    &-19     &12&1\\
4675j1     &-19     &18&2\\
4963c1    &-19     &3&1\\&&&\\\hline
 \end{tabular}
 \end{tabular}
  \end{center}
 \end{table} 

Finally we are left with 2366d1 and 4914n1. Each isogeny class contains a curve $F$ for which $\#\Sha(\Q,F)_\text{an}=1$, so we replace these curves with 2366d2 and 4914n2. Then 3-descent shows that $\Sha(\Q,F)[3] = 0$, and hence $\BSD(F, 3)$ for both curves.
\end{proof}

\begin{cor}If $\rank(E(\Q)) = 0$, $E$ has conductor $N < 5000$ and $E$ has complex multiplication, then the full BSD conjecture is true.
\end{cor}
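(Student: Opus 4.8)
The plan is to reduce the full conjecture to the assertion $\BSD(E, p)$ for every prime $p$, which is exactly the characterization recorded just after the definition of $\BSD(E/\Q,p)$ (the Birch and Swinnerton-Dyer conjecture holds for $E$ if and only if $\BSD(E/\Q,p)$ is true for each $p$). Since the statement concerns a single curve with a fixed finite set of invariants, I would dispose of the primes in three ranges---$p = 2$, $p = 3$, and $p \geq 5$---invoking a different previously established result in each range.

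First I would upgrade the hypothesis $\rank(E(\Q)) = 0$ to $\ran(E/\Q) = 0$. Because $N < 5000 < 130{,}000$, the rank conjecture has been verified for $E$ \cite{crem-130K}, so the analytic and algebraic ranks agree and both vanish. This is the one point requiring care, since Corollary \ref{CM_zero} is phrased in terms of the analytic rank, and there is no circularity: the cited verification establishes $r = \ran$ independently.

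With $\ran(E/\Q) = 0$ in hand, the primes $p \geq 5$ are handled at once by Corollary \ref{CM_zero}, which gives $\BSD(E/\Q, p)$ for all such $p$ for any rank-zero CM curve. The prime $p = 3$ is the delicate one: Corollary \ref{CM_zero} yields $\BSD(E/\Q,3)$ only conditionally, and gives nothing when $K = \Q(\sqrt{-3})$, so instead I would appeal to Theorem \ref{p_3}, which establishes $\BSD(E, 3)$ for every curve of conductor $N < 5000$ with no CM or rank restriction. Likewise $\BSD(E, 2)$ follows from Theorem \ref{p_2}, valid in the same conductor range.

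Assembling these three cases, $\BSD(E, p)$ holds for every prime $p$, and therefore the full Birch and Swinnerton-Dyer conjecture holds for $E$. The main obstacle is not in this corollary itself, which amounts to bookkeeping over the three prime ranges, but rather resides entirely in the results it cites---in particular the explicit descent computations underpinning Theorems \ref{p_2} and \ref{p_3} at the small primes $2$ and $3$.
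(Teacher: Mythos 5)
Your proof is correct and follows essentially the same route as the paper, whose entire proof reads ``This is a direct result of Corollary \ref{CM_zero} and Theorems \ref{p_2} and \ref{p_3}.'' Your additional step of converting $\rank(E(\Q)) = 0$ into $\ran(E/\Q) = 0$ via the verified rank conjecture for $N < 130{,}000$, and your observation that $p=3$ must come from Theorem \ref{p_3} rather than the conditional part of Corollary \ref{CM_zero}, simply make explicit what the paper leaves implicit.
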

\begin{proof}
 This is a direct result of Corollary \ref{CM_zero} and Theorems \ref{p_2} and \ref{p_3}.
\end{proof}

\begin{thm}\label{opt_sha_nontriv}
If $E/\Q$ is an optimal curve with conductor $N < 5000$ and nontrivial analytic $\Sha$, i.e. $\#\Sha(\Q,E)_\text{an} \neq 1$, then for every $p \mid \#\Sha(\Q,E)_\text{an}$, $\BSD(E, p)$ is true.
\end{thm}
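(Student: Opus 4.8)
The plan is to reduce the statement to a finite, prime-by-prime verification, leveraging the fact that for $p \in \{2,3\}$ the result is already Theorems \ref{p_2} and \ref{p_3}, so I would first restrict attention to primes $p \geq 5$ dividing $\#\Sha(\Q,E)_\text{an}$. For each such curve $E$ of conductor $N < 5000$ one knows from the opening of Section \ref{proofs} that $\ord_p(\#\Sha(\Q,E)_\text{an}) \leq 4$, and in fact only finitely many pairs $(E,p)$ with $p \geq 5$ and $p \mid \#\Sha(\Q,E)_\text{an}$ occur in this range. The overall strategy is therefore to enumerate these pairs and, for each one, establish the matching inequality $\ord_p(\#\Sha(\Q,E)(p)) \geq \ord_p(\#\Sha(\Q,E)_\text{an})$ from below and the reverse inequality from above, since the two together give $\BSD(E,p)$.

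For the upper bound I would split into cases according to the reduction type and the image of $\overline\rho_{E,p}$. When $E$ is non-CM of rank zero and $p \nmid 6N$ with $\overline\rho_{E,p}$ surjective, Theorem \ref{kato} gives $\ord_p(\#\Sha(\Q,E)) \leq \ord_p(L(E/\Q,1)/\Omega(E)) = \ord_p(\#\Sha(\Q,E)_\text{an})$ directly. For the remaining primes I would invoke the Kolyvagin-type bounds: Theorem \ref{big-result}, or its extensions Theorems \ref{cha} and \ref{stein-et-al}, yield $\ord_p(\#\Sha(\Q,E)) \leq 2\ord_p(I_K)$ once a suitable Heegner field $K = \Q(\sqrt D)$ has been chosen, and Theorem \ref{jetchev} sharpens this to the exact value $\ord_p(\#\Sha(\Q,E)_\text{an})$ whenever $p$ divides at most one Tamagawa number. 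Where these theorems do not apply, or where the Heegner index is hard to pin down, the Stein--Wuthrich algorithm \cite{shark} supplies an upper bound for $\#\Sha(\Q,E)[p^\infty]$. CM curves of rank zero are handled by Corollary \ref{CM_zero} and of rank one by Theorem \ref{big_CM_thm}, using the surjectivity computations of Section \ref{examples} for the small inert primes $p \in \{5,7,11\}$. In a handful of recalcitrant cases a direct $p$-descent (as in Example \ref{CM_rk1_5descent}) proves $\Sha(\Q,E)[p]$ is as small as required.

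For the lower bound I would argue that the analytic order is realized inside the genuine Shafarevich--Tate group, either by exhibiting the relevant elements via the visualization techniques of Cremona--Mazur \cite{crem-maz-vis} and Stein \cite{stein-phd}, or, when an exact descent has been carried out, by reading the order of $\Sha(\Q,E)[p^k]$ off the descent directly. In many cases the upper-bound computation is already sharp, so that the descent or algorithm returns exactly $\ord_p(\#\Sha(\Q,E)_\text{an})$ and no separate lower bound is needed.

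The main obstacle I expect is not any single theorem but the bookkeeping: one must certify, for the finitely many pairs $(E,p)$ with $p \geq 5$ and $p \mid \#\Sha(\Q,E)_\text{an}$, that at least one of the available tools applies and returns the exact valuation. The genuinely delicate pairs are those where $E$ has rank zero, a bad prime $p > 3$ with $\overline\rho_{E,p}$ surjective divides $\#\Sha(\Q,E)_\text{an}$ (so Theorem \ref{kato} fails, as in the curve 2900d1 where $\#\Sha = 25$), and where one must instead bound the Heegner index $I_K$ tightly enough via the methods of Section \ref{heeg_pts} to force $\ord_p(I_K)$ down to its correct value. Verifying that such a bound is attainable for every problematic pair in the conductor range, together with confirming the hypotheses on Tamagawa numbers needed for Theorem \ref{jetchev}, is the crux of the argument.
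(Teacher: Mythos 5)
Your proposal follows essentially the same route as the paper's proof: reduce to $p \geq 5$ via Theorems \ref{p_2} and \ref{p_3}, note the pairs $(E,p)$ are finite (in fact $p \leq 7$ by \cite{crem-130K}, giving twelve curves at $p=5$ and one at $p=7$), bound $\ord_p(\#\Sha(\Q,E))$ from above by Theorem \ref{kato} in the generic rank-zero cases and by a Heegner-index bound with Kolyvagin (2900d1) or the Stein--Wuthrich algorithm (3185c1) in the exceptions, and get the lower bound from visibility (Cremona--Mazur for $p=5$; Grigorov's thesis or Fisher's visibility for 3364c1 at $p=7$). The one small step you leave implicit is the paper's use of the squareness of the order of $\Sha(\Q,E)$ to promote the visible nontrivial $5$-part (a priori only giving $\ord_5 \geq 1$) together with the upper bound $\ord_5 \leq 2$ to the exact value $\#\Sha(\Q,E) = 25$.
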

\begin{proof}
By \cite{crem-130K} we have that $p \leq 7$, and by the theorems of the previous section, we may assume that $p \geq 5$.

For $p = 5$, $E$ is one of the twelve curves listed in Table \ref{nontriv_5sha}. These are all rank 0 curves with $E[5]$ surjective, so if $5 \nmid N$ Theorem \ref{kato} provides an upper bound of 2 for $\ord_5(\#\Sha(\Q,E))$. This leaves just 2900d1 and 3185c1. For 2900d1, Corollary \ref{bounding_heeg_ind} together with a point search shows that the Heegner index is at most 23 for discriminant -71, hence Kolyvagin's inequality provides the upper bound of 2 in this case. For 3185c1, the algorithm of Stein and Wuthrich \cite{shark} provides the upper bound of 2. In all twelve cases \cite{crem-maz-vis} (and the appendix of \cite{agashe-stein-vis}) finds visible nontrivial parts of $\Sha(\Q,E)[5]$. Since the order must be a square, $\#\Sha(\Q,E)$ must be exactly 25 in each case.

\begin{table}
 \caption{Optimal $E$ with $\ord_5(\#\Sha(\Q,E)_\text{an}) = 2$}
 \label{nontriv_5sha}
\begin{center}  \begin{tabular}{|l|l|l|l|l|l|} \hline
1058d1 & 1664k1 & 2574d1 & 2900d1 & 3384a1 & 4092a1\\
1246b1 & 2366f1 & 2834d1 & 3185c1 & 3952c1 & 4592d1\\\hline
 \end{tabular}\end{center}
 \end{table} 

For $p = 7$ there is only one curve $E$ = 3364c1 and $E[7]$ is surjective. Since $7 \nmid 3364$ and $E$ is a rank 0 curve without complex multiplication, Theorem \ref{kato} bounds $\ord_7(\#\Sha(\Q,E))$ from above by 2. Furthermore, Grigorov's thesis \cite[p. 88]{grigorov_phd} shows that $\ord_7(\#\Sha(\Q,E))$ is bounded from below by 2. Alternatively, the elements of $\Sha(\Q,E)[7]$ are visible at three times the level, as Tom Fisher kindly pointed out -- one should also be aware of his tables of nontrivial elements of $\Sha$ of order three and five, available on his website\footnote{\url{http://www.dpmms.cam.ac.uk/~taf1000/}}.
\end{proof}


\begin{thm}\label{rank0_irr}
If $E/\Q$ is an optimal rank 0 curve with conductor $N < 5000$ and $p$ is a prime such that $E[p]$ is irreducible, then $\BSD(E, p)$ is true.
\end{thm}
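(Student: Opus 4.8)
The plan is to reduce to $p \geq 5$, then split according to whether $E$ has complex multiplication, whether $p$ divides the analytic order of $\Sha$, and whether the generic hypotheses of Kato's theorem hold; the finitely many residual pairs get dispatched individually. First I would invoke Theorems \ref{p_2} and \ref{p_3} to settle $p=2$ and $p=3$, so that throughout we may assume $p \geq 5$. If $E$ has complex multiplication, Corollary \ref{CM_zero} immediately gives $\BSD(E,p)$ for every $p \geq 5$, so I would assume $E$ is non-CM for the remainder. Since $E[p]$ is irreducible and $p \geq 5$, the curve has no rational $p$-torsion, so $\ord_p(\#E(\Q)_\tors)=0$; combined with $r=0$ and the definition of $\#\Sha(\Q,E)_\text{an}$ this yields the bookkeeping identity
$$
\ord_p\!\left(\frac{L(E/\Q,1)}{\Omega(E)}\right) = \ord_p(\#\Sha(\Q,E)_\text{an}) + \ord_p\Big(\prod_q c_q(E)\Big),
$$
which is what makes Kato's bound usable.

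Next I would dispose of the case $p \mid \#\Sha(\Q,E)_\text{an}$. By \cite{crem-130K} this forces $p \in \{5,7\}$, and these are exactly the pairs treated in Theorem \ref{opt_sha_nontriv}, whose proof supplies both the Kolyvagin/Kato upper bound and the matching lower bound from visibility; hence $\BSD(E,p)$ holds. In the complementary case $p \nmid \#\Sha(\Q,E)_\text{an}$ the goal becomes $\ord_p(\#\Sha(\Q,E)) = 0$. In the generic situation---$p \nmid 6N$, $\overline\rho_{E,p}$ surjective, and $p \nmid \prod_q c_q(E)$---the displayed identity shows $\ord_p(L/\Omega) = 0$, so Theorem \ref{kato} gives $\ord_p(\#\Sha) \leq 0$ and we are done.

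What remains, and where I expect the real work to lie, is the finite list of exceptional pairs $(E,p)$ with $p \geq 5$ and $E[p]$ irreducible but $p \nmid \#\Sha(\Q,E)_\text{an}$ that fail one of the generic hypotheses: $p \mid N$, $\overline\rho_{E,p}$ irreducible but not surjective, or $p \mid \prod_q c_q(E)$. For each such pair I would obtain a Heegner-index upper bound, using Theorem \ref{cha} or Theorem \ref{stein-et-al} (both of which require only irreducibility, not full surjectivity) to get $\ord_p(\#\Sha) \leq 2\,\ord_p(I_K)$, sharpened by Jetchev's Theorem \ref{jetchev} to absorb the Tamagawa contribution $\max_{q\mid N}\ord_p(c_q)$; choosing a discriminant $D$ for which $I_K$ is prime to $p$ then forces $\ord_p(\#\Sha)=0$. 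Pairs not amenable to a convenient Heegner-index computation would instead be handled by the Stein--Wuthrich algorithm \cite{shark}.

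The main obstacle is thus organizational and computational rather than theoretical: one must enumerate these finitely many exceptional pairs for $N < 5000$, verify that the hypotheses of at least one of Theorems \ref{cha}, \ref{stein-et-al}, or \ref{jetchev}, or else the algorithm of \cite{shark}, are genuinely met in each case, and carry out the associated Heegner-point, descent, or $p$-adic $L$-function computations. I expect the delicate cases to be precisely those where $p$ divides a Tamagawa number, since there only the Jetchev-refined bound is sharp enough to reach $\ord_p(\#\Sha)=0$, and those where $\overline\rho_{E,p}$ is irreducible but not surjective, where Kato is unavailable and one must fall back on a Heegner-index or Stein--Wuthrich argument.
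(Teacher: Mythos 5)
Your proposal follows essentially the same route as the paper's proof: reduce to $p \geq 5$, non-CM, and $\ord_p(\#\Sha(\Q,E)_\text{an}) = 0$ via Theorems \ref{p_2} and \ref{p_3}, Corollary \ref{CM_zero} and Theorem \ref{opt_sha_nontriv}; apply Theorem \ref{kato} in the generic case; and dispatch the finitely many exceptional pairs by Heegner-index bounds through Theorems \ref{big-result}, \ref{cha}, \ref{stein-et-al} and \ref{jetchev}, with the Stein--Wuthrich algorithm as a fallback. The paper merely organizes the exceptional pairs by whether $E$ has additive reduction at $p$ (since additive reduction forces $p^2 \mid N$, ruling out Theorem \ref{cha}, and also disables Stein--Wuthrich, leaving only Theorem \ref{stein-et-al} and direct Heegner-index arguments there), which is a refinement of the same case analysis you describe.
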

\begin{proof}
By theorems of the previous two sections, we may assume that $p > 3$, $E$ does not have complex multiplication and $\ord_p(\#\Sha(\Q,E)_\text{an}) = 0$. In this case Theorem \ref{kato} applies to $E$ (since the rank part of the conjecture is known for $N < 130000$ by \cite{crem-130K}). At first, suppose that $E$ does not have additive reduction at $p$.

Suppose that $E[p]$ is surjective. In this case we need only consider primes dividing the conductor $N$. For such pairs $(E,p)$, we can compute the Heegner index or an upper bound for it, which gives an upper bound on $\ord_p(\Sha(\Q,E))$. When the results of Kolyvagin and Jetchev were not strong enough to prove $\BSD(E, p)$ using the first available Heegner discriminant, the algorithm of Stein and Wuthrich \cite{shark} was (although to be fair the former may be strong enough using other Heegner discriminants in these cases). This algorithm always provides a bound in this situation since $p > 3$ is a surjective prime of non-additive reduction and $E$ is rank 0.

Now suppose that $E[p]$ is not surjective. The curve-prime pairs matching these hypotheses can be found in Table \ref{rank0_irr_nonsur_nonadd} along with selected Heegner indices. The only prime to occur in these pairs is 5, and each chosen Heegner discriminant and index is not divisible by 5 except for $E$ = 3468h. Further, 5 does not divide the conductor of any of these curves so by Cha's theorem \ref{cha}, $\BSD(E, 5)$ is true for these pairs. For $E$ = 3468h note that one of the Tamagawa numbers is 5, so by Theorem \ref{jetchev}, $\BSD(E, 5)$ is true for this curve.

\begin{table}
\caption{Non-additive reduction, irreducible but not surjective}
\label{rank0_irr_nonsur_nonadd}
\begin{center}  \begin{tabular}{|lrrr|lrrr|lrrr|} \hline
$E$ & $p$ & $D$ & $I_D$ & $E$ & $p$ & $D$ & $I_D$ & $E$ & $p$ & $D$ & $I_D$\\\hline
324b1 & 5 & -23 & 6     & 1296g1 & 5 & -23 & 2            & 3468c1 & 5 & -47 & 2\\
324d1 & 5 & -23 & 2     & 1296i1 & 5  & -23 & 2            & 3468h1 & 5 & -47 & $\leq 11$\\
608b1 & 5 & -31 & 2     & 1444a1 & 5 & -31 & 2            & 4176n1 & 5 & -23 & $\leq 3$\\
648c1 & 5 & -23 & 4     & 2268a1 & 5 & -47 & 6            & 4232b1 & 5 & -7 & 2\\
1044a1 & 5 & -23 & 12 & 2268b1 & 5 & -47 & $\leq 3$ & 4232d1 & 5 & -7 & 6\\
1216i1 & 5  & -31 & 1   & 3132a1 & 5 & -23 & 6            & &&&\\
\hline
\end{tabular}\end{center}
\end{table}

We are now left to consider the 1964 pairs $(E,p)$ for which $E$ has additive reduction at $p$. There are 14 pairs where $E[p]$ is not surjective, and Theorem \ref{stein-et-al} applies to all of them. The Heegner point height calculations listed in Table \ref{add_irr_not_surj} prove that $\BSD(E, p)$ is true in these cases. Note that when $p$ may divide the Heegner index, it must do so of order at most 1, and in these cases it also divides a Tamagawa number, so Theorem \ref{jetchev} assists Theorem \ref{stein-et-al}.

\begin{table}
\caption{Additive reduction, irreducible but not surjective}
\label{add_irr_not_surj}
\begin{center}  \begin{tabular}{|lrrrr|lrrrr|} \hline
$E$ & $p$ & $D$ & $I_D$ & $\tau_p$ & $E$ & $p$ & $D$ & $I_D$ & $\tau_p$\\\hline
675d1 & 5 & -11 &   2& 1 & 2400bg1 & 5& -71 &  20& 10\\
675f1 & 5 & -11  &  2& 1 & 2450d1 &7  &-31  &  1& 1\\
800e1 & 5 & -31  &  6& 3 & 2450bd1 &7 &-31 & $< 13$& 7\\
800f1 & 5 & -31  &  2& 1 & 4800n1 &5  &-71 & $< 5$& 3\\
1600i1 & 5 & -31 &   4& 2 & 4800u1  &5 &-71 & 10& 5\\
1600k1 & 5 & -31  &  4& 2 & 4900s1 &5 & -31 &   4& 2\\
2400f1 &5 &-191 & $< 5$& 2 & 4900u1 &5&  -31 &  12& 6\\
\hline
\end{tabular}\end{center}
$\tau_p = \ord_p(\prod_qc_q)$.
\end{table}

Now we may also assume that $E[p]$ is surjective. In these cases, Heegner index computations sufficed to prove $\BSD(E, p)$, using Theorem \ref{big-result} and Theorem \ref{jetchev}. For 79 of these curves the Heegner index computation required 4- and even 6-descent \cite{tom_fisher_6_12}. These are listed in Table \ref{add_surj_hard}, thanks to Tom Fisher.
\end{proof}

For example if $E = \text{1050c1}$, the first available Heegner discriminant is -311. Bounding the Heegner index is difficult in this case since it involves point searches of prohibitive height. However in two and a half seconds the algorithm of Stein and Wuthrich provides an upper bound of 0 for the 7-primary part of the Shafarevich-Tate group, which eliminates the last prime for that curve.

\begin{table}
\caption{Additive reduction, surjective}
\label{add_surj_hard}
\begin{center}
\begin{tabular}{|lrrrr|} \hline
     $E$ &$p$ &   $D$ & $I_D$ & $\tau_p$\\\hline
  1050l1 &  5 &  -311 &     3 & 0 \\
  1050n1 &  5 & -2399 &    19 & 0 \\
  1050q1 &  5 &  -311 &     7 & 0 \\
  1350o1 &  5 &  -239 &     4 & 0 \\
  1470q1 &  7 &  -479 &    26 & 0 \\
  1764h1 &  7 &  -167 &     6 & 0 \\
  1850d1 &  5 &  -471 &     6 & 0 \\
  2100o1 &  5 &  -311 &     4 & 0 \\
  2352x1 &  7 &  -551 &     6 & 0 \\
 2450bd1 &  5 &  -559 &    14 & 0 \\
  2450k1 &  5 &  -159 &     2 & 0 \\
 2550bc1 &  5 &  -191 &     7 & 0 \\
  2550j1 &  5 &  -239 &    23 & 0 \\
  2550z1 &  5 & -1511 &    45 & 1 \\
 2646ba1 &  7 &   -47 &    11 & 0 \\
 2646bd1 &  7 &  -143 &    10 & 0 \\
  2650k1 &  5 &  -679 &    28 & 0 \\
  3038m1 &  7 &   -55 &     6 & 0 \\
 3150bc1 &  5 & -1511 &     6 & 0 \\
 3150bd1 &  5 & -1991 &    64 & 0 \\
 3150bj1 &  5 &  -311 &     2 & 0 \\
 3150bn1 &  5 & -1991 &    22 & 0 \\
  3150t1 &  5 & -1151 &     6 & 0 \\
  3185c1 &  7 &  -199 &    10 & 0 \\
  3225b1 &  5 &  -119 &     4 & 0 \\
  3234c1 &  7 &  -503 &    16 & 0 \\
  3350d1 &  5 &   -79 &    12 & 0 \\
  3450p1 &  5 &  -479 &    13 & 0 \\
  3450v1 &  5 &  -191 &   180 & 1 \\
  3630c1 & 11 & -1559 &     4 & 0 \\
  3630l1 & 11 &  -239 &    35 & 0 \\
  3630r1 & 11 &  -239 &     7 & 0 \\
  3630u1 & 11 & -1319 &     9 & 0 \\
  3650j1 &  5 &   -79 &    14 & 0 \\
 3822bc1 &  7 &  -647 &    18 & 0 \\
  3822e1 &  7 & -1511 &     2 & 0 \\
  3822u1 &  7 &  -503 &    10 & 0 \\
  3822w1 &  7 &  -503 &     6 & 0 \\
  3822z1 &  7 & -1823 &    32 & 0 \\
  3850e1 &  5 & -1399 &     6 & 0 \\
\hline
\end{tabular}
\begin{tabular}{|lrrrr|} \hline
     $E$ &$p$ &   $D$ & $I_D$ & $\tau_p$\\\hline
  3850m1 &  5 & -2351 &     2 & 0 \\
  3850y1 &  5 & -1399 &    54 & 0 \\
  3900k1 &  5 & -1199 &     4 & 0 \\
  3900l1 &  5 &  -191 &    30 & 1 \\
 4050bi1 &  5 &   -71 &     4 & 0 \\
  4050s1 &  5 &  -551 &     6 & 0 \\
  4050x1 &  5 &  -119 &     6 & 0 \\
 4200bd1 &  5 &  -479 &    27 & 0 \\
  4200m1 &  5 &  -719 &    32 & 0 \\
  4350q1 &  5 &  -719 &    11 & 0 \\
  4350w1 &  5 &  -719 &    24 & 0 \\
  4410b1 &  7 &  -671 &     4 & 0 \\
 4410bi1 &  7 & -1319 &    18 & 0 \\
 4410bj1 &  7 &  -311 &     6 & 0 \\
  4410q1 &  7 &  -839 &     4 & 0 \\
  4410u1 &  7 & -2231 &    10 & 0 \\
  4550p1 &  5 & -1119 &    14 & 0 \\
  4606b1 &  7 &   -31 &    12 & 0 \\
 4650bo1 &  5 &  -119 &    18 & 0 \\
 4650bs1 &  5 &  -239 &    84 & 0 \\
 4650bt1 &  5 & -1511 &     2 & 0 \\
 4650bu1 &  5 & -1199 &   170 & 1 \\
  4650q1 &  5 &  -119 &     6 & 0 \\
  4650w1 &  5 &  -719 &    46 & 0 \\
  4725q1 &  5 &   -59 &     8 & 0 \\
 4800ba1 &  5 &   -71 &     7 & 0 \\
  4850h1 &  5 &   -31 &    22 & 0 \\
  4900w1 &  5 &  -311 &     8 & 0 \\
 4950bj1 &  5 &  -239 &     6 & 0 \\
 4950bk1 &  5 &  -239 &    14 & 0 \\
 4950bm1 &  5 &  -479 &    56 & 0 \\
 4950bp1 &  5 &  -431 &    22 & 0 \\
  4950w1 &  5 & -1151 &     4 & 0 \\
  4950x1 &  5 &  -359 &    12 & 0 \\
 4998bg1 &  7 &   -47 &    18 & 0 \\
 4998bk1 &  7 &   -47 &    36 & 0 \\
  4998k1 &  7 &   -47 &    30 & 0 \\
  4998t1 &  7 & -1487 &     6 & 0 \\
  4998u1 &  7 &   -47 &     6 & 0 \\
&&&&\\ \hline
\end{tabular}
\end{center}
$\tau_p = \ord_p(\prod_qc_q)$.
\end{table}



\begin{prop}\label{1155k}
If $E$ is the elliptic curve 1155k and $p = 7$, then $\BSD(E, p)$ is true.
\end{prop}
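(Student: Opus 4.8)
The plan is to reduce $\BSD(E,7)$ to an upper bound on the $7$-primary part of $\Sha(\Q,E)$. Since $E = \text{1155k1}$ is not among the finitely many curves with $\ord_7(\#\Sha(\Q,E)_\mathrm{an}) > 0$ (cf.\ Theorem \ref{opt_sha_nontriv}, where the only such curve at $7$ is 3364c1), we have $\ord_7(\#\Sha(\Q,E)_\mathrm{an}) = 0$, so it suffices to prove $\Sha(\Q,E)(7) = 0$. First I would record the local data: $1155 = 3\cdot 5\cdot 7\cdot 11$, so $7 \parallel N$, the curve has multiplicative reduction at $7$, and in particular $7^2 \nmid N$.

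Next I would explain why the standard tools do not apply directly, which is what singles this curve out. Because $7 \mid N$, Kato's Theorem \ref{kato} is unavailable. I expect that $\overline\rho_{E,7}$ is irreducible but that its image is neither all of $\GL_2(\F_7)$ nor contained in a Borel subgroup---precisely the configuration not covered by the 1050c1 example preceding this proposition. The failure of surjectivity rules out a direct appeal to Theorem \ref{big-result}, while the failure of the Borel condition means the Stein--Wuthrich algorithm \cite{shark} cannot be invoked.

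The key point is that Cha's Theorem \ref{cha} requires only that $\overline\rho_{E,7}$ be irreducible, together with $7^2 \nmid N$ and $7 \nmid 2\,\Delta(K)$. Since $7 \mid N$, any $K = \Q(\sqrt{D})$ satisfying the Heegner hypothesis for $E$ has $7$ split, hence unramified, so $7 \nmid \Delta(K)$ automatically; as $7 \neq 2$ the hypotheses of Theorem \ref{cha} hold. This yields $\ord_7(\#\Sha(\Q,E)) \leq 2\,\ord_7(I_K)$, and the problem reduces to exhibiting one Heegner discriminant $D$ with $7 \nmid I_K$. I would compute $\hat h(y_K)$ to sufficient precision via the Gross--Zagier--Zhang formula (Theorem \ref{gzz}) and then extract the index from Corollary \ref{bounding_heeg_ind} after locating a generator of the relevant Mordell--Weil group.

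The main obstacle I anticipate is exactly this index computation. The smallest admissible discriminants are likely to give an index divisible by $7$, which would only produce the useless bound $\ord_7(\#\Sha(\Q,E)) \le 2$; one therefore has to move to a larger $D$, where the Heegner point has large canonical height and the twist $E^D$ has conductor $D^2 N$, so that producing an explicit generator requires a higher descent (a $4$- or $6$-descent, as for the curves in Table \ref{add_surj_hard}) rather than a naive point search. Once such a $D$ is found and $7 \nmid I_K$ is confirmed, Theorem \ref{cha} forces $\Sha(\Q,E)(7) = 0$, which matches $\ord_7(\#\Sha(\Q,E)_\mathrm{an}) = 0$ and establishes $\BSD(E,7)$.
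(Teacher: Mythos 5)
Your reduction to showing $\Sha(\Q,E)(7) = 0$ is fine, and Cha's Theorem \ref{cha} does apply as you say ($7 \parallel 1155$, $7$ splits in any Heegner field, $E[7]$ irreducible). The genuine gap is the final step: there is no Heegner discriminant $D$ with $7 \nmid I_K$ to be found, and the obstruction is exactly what the paper records in the note immediately preceding the proposition: $c_3(E) = c_5(E) = 7$. Over any $K$ satisfying the Heegner hypothesis every bad prime splits, so the Tamagawa product of $E/K$ contains $c_3^2 c_5^2 = 7^4$; by the Gross--Zagier formula (Theorem \ref{gzz}) combined with the BSD formula over $K$, which is what actually governs the size of $\hat h(y_K)$, one gets $\ord_7(I_K) \geq 2$ for every admissible $D$, and this is what the computations show. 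So Cha's theorem can never give better than $\ord_7(\#\Sha(\Q,E)) \leq 4$, and even Jetchev's refinement (Theorem \ref{jetchev}) subtracts only $\max_q \ord_7(c_q) = 1$, not the sum of the two Tamagawa valuations, leaving the bound $\ord_7(\#\Sha(\Q,E)) \leq 2$ stated in the paper. Your anticipated ``main obstacle'' (large heights, needing $4$- or $6$-descent to pin down the index) is therefore not the real difficulty: the quantity you need to be coprime to $7$ is structurally divisible by $7^2$. Your speculation that $\overline\rho_{E,7}$ has image neither full nor Borel is also beside the point -- the paper never claims this, and what singles out 1155k is the pair of Tamagawa numbers divisible by $7$, which defeats every Heegner-index-based bound.

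The paper's actual proof (due to Wuthrich) takes a completely different route. It works over $K = \Q(\sqrt{-8})$, chosen so that $7$ is \emph{inert} -- deliberately violating the Heegner hypothesis -- so that $E/K$ has split multiplicative reduction at the prime above $7$. Kato's theorem, valid over abelian fields unramified at $p$, makes the characteristic series $f(T)$ of the dual Selmer group divide $L_p(E/K,T) = L_p(E/\Q,T)\cdot L_p(E^{-8}/\Q,T)$; one is then in an exceptional-zero situation, and Jones's leading-term formula expresses the leading coefficient of $f(T)$ in terms of $\prod_v c_v = 7^3$, $\#\tilde E(\F_{49})$, $\#\Sha(K,E)[7^\infty]$, the $7$-adic regulator and the $\mathcal{L}$-invariant. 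Explicit $7$-adic computation of the two $p$-adic $L$-series, the Tate parameter $q_E$, $\mathcal{L} = \log_p(q_E)/\ord_p(q_E)$, and the $7$-adic height of a generator shows the leading term of $L_p(E/K,T)$ has valuation $2$, which forces $\ord_7(\#\Sha(K,E)) = 0$ and hence, by Lemma \ref{qit_sha}, $\ord_7(\#\Sha(\Q,E)) = 0$. If you want to attack this case yourself, the lesson is that you must leave the Kolyvagin/Cha/Jetchev framework entirely and use Iwasawa-theoretic upper bounds (or a congruence argument as suggested after the proof via $77a[7] \cong 1155k[7]$).
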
Note that for $(E,p) = (\text{1155k},7)$, we have $c_3(E) = 7, c_5(E) = 7,$
$$
\ord_7(\#\Sha(\Q,E)_\text{an}) = 0 \text{\ \ and\ \ } \ord_7(\#\Sha(\Q,E)) \leq 2,
$$
by Theorem \ref{jetchev}.
The following proof is due to Christian Wuthrich.
\begin{proof}

First, note that $E/\Q$ has non-split multiplicative reduction at 7. Let $D = -8$ and let $K = \Q(\sqrt{D})$, noting that $E(K) = E(\Q) \cong \Z$ and that $\#E^D(\Q) = 1$. Since 7 is inert in $K$, the reduction of $E/K$ at $7\O_K$ is split multiplicative. Kato's theorem \cite[Thm. 17.4]{kato04} is known to hold for curves with multiplicative reduction over abelian fields unramified at $p$. The characteristic series $f(T)$ of the dual of the Selmer group therefore divides the $p$-adic $L$-series
$$
L_p(E/K,T) = L(E/\Q,T) \cdot L(E^D/\Q,T).
$$

By work of Jones \cite[Thm. 3.1]{jones_mul}, we can compute the order of vanishing of $f(T)$ at $T=0$, which is 2 since the reduction is split multiplicative, and the leading term which is, up to a unit in $\Z_p^\times$,
$$
\frac{\prod_v c_v \cdot \#\tilde E(\F_{49}) \cdot \#\Sha(K, E)[7^\infty] \cdot \Reg_p(E(K)) \cdot \mathcal{L}}{\#E(K)_\tors^2},
$$
where $\mathcal{L}$ is the $L$-invariant and $\Reg_p$ is the $p$-adic regulator as defined in the split multiplicative case by \cite{MTT86} and corrected by \cite{werner_split_mult}.

We compute $\prod_v c_v = 7^3$, $\tilde E(\F_{49}) \cong \Z/48\Z$ and
\begin{align*}
 L_p(E/\Q, T)   &= (6\cdot 7 + O(7^2))\cdot T + (4\cdot 7 + O(7^2))\cdot T^2 + O(T^3)\\
 L_p(E^D/\Q, T) &= (2\cdot 7 + O(7^2))\cdot T + (4\cdot 7 + O(7^2))\cdot T^2 + O(T^3).
\end{align*}

To compute the $L$-invariant $\mathcal L$ we switch to the Tate curve. Since $E^D/\Q$ has split multiplicative reduction at 7 and the parameter is the same as for $E/K$, we have
$$
q_E = 3 \cdot 7 + 3 \cdot 7^2 + 4 \cdot 7^3 + 7^5 + O(7^6).
$$
Hence the $L$-invariant is
$$
\mathcal L = \log_p(q_E)/\ord_p(q_E) = 2\cdot 7 + 6 \cdot 7^2 + 2 \cdot 7^3 + 5 \cdot 7^4 + O(7^6).
$$

Finally we wish to compute the $p$-adic regulator. If $P$ is a generator of $E(K)$, then $Q = 7\cdot 8 \cdot P$ has good reduction everywhere
 and lies in the formal group at the place $7 \O_K$.
 One computes, as in \cite[\S 4.2]{shark}, the $p$-adic height of $Q$ and so that of $P$:
$$
h_p(P) = \frac{h_p(Q)}{(7\cdot 8)^2} = 2\cdot 7^{-1} + 4 + 5 \cdot 7 + 2 \cdot 7^2 + 7^3 + 3\cdot 7^5 + O(7^6).
$$

Since the leading term of the $p$-adic $L$-function is $5\cdot 7^2 + O(7^3)$ and the leading term of $f(T)$ must have smaller valuation, we have
$$
\ord_p\left(7^3 \cdot 48 \cdot \#\Sha(K, E)[7^\infty] \cdot \frac{h_p(P)}7 \cdot \mathcal L\right) \leq 2.
$$
Therefore,
$$
\ord_p(\#\Sha(K, E)[7^\infty]) \leq -\ord_p(h_p(P)) - \ord_p(\mathcal L) = 0.
$$
In particular, $\ord_7(\#\Sha(\Q,E)) = 0$.
\end{proof}
It may also be possible to prove this using \cite{greenberg-vatsal}, since there is a modular congruence 77a[7] $\cong$ 1155k[7].

\begin{thm}\label{rank_1_irr}
If $E/\Q$ is a rank 1 curve with conductor $N < 5000$ and $p$ is a prime such that $E[p]$ is irreducible, then $\BSD(E, p)$ is true.
\end{thm}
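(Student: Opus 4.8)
The plan is to follow the architecture of the rank~$0$ case (Theorem~\ref{rank0_irr}), but with Kato's Euler system (Theorem~\ref{kato}), which is available only in rank~$0$, replaced throughout by Kolyvagin's Heegner-point bounds. First I would reduce to the essential pairs. By Theorems~\ref{p_2} and~\ref{p_3} we may assume $p \geq 5$, and by the isogeny-invariance of $\BSD(E,p)$ (legitimate here, since $E[p]$ irreducible forces every isogeny in the class to have degree prime to $p$) we may assume $E$ optimal. Theorem~\ref{opt_sha_nontriv} already settles every pair with $p \mid \#\Sha(\Q,E)_\mathrm{an}$, so it remains to treat the pairs with $\ord_p(\#\Sha(\Q,E)_\mathrm{an}) = 0$; for these, since $\ord_p(\#\Sha(\Q,E)) \geq 0$ automatically, the theorem reduces to exhibiting an upper bound $\ord_p(\#\Sha(\Q,E)) \leq 0$.

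For curves with complex multiplication I would appeal to Theorem~\ref{big_CM_thm}(2): part~(2a) disposes of every split prime $p \geq 3$, while part~(2b) bounds $\ord_p(\#\Sha(\Q,E))$ by $2\ord_p(I)$ at inert primes of good reduction, which Jetchev's Theorem~\ref{jetchev} then matches to $\ord_p(\#\Sha(\Q,E)_\mathrm{an}) = 0$. The remaining CM pairs are those of bad (hence additive) reduction at an inert prime; for $N < 5000$ these involve only $p \in \{5,7,11\}$ and are exactly the curve-prime pairs whose mod-$p$ representations are proved surjective in Examples~\ref{CM_rk1_5}, \ref{CM_rk1_7} and~\ref{CM_rk1_11}, so that Theorem~\ref{big-result} applies to them directly.

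The heart of the argument is the non-CM case. Since $\rank(E(\Q)) = 1$, the rank verification furnishes a generator $x$ of $E(\Q)_{/\tors}$, and for a discriminant $D < 0$ satisfying the Heegner hypothesis with $\ran(E^D) = 0$ the Heegner index is read off from Corollary~\ref{bounding_heeg_ind}(1) via $I_K = \sqrt{\hat h(y_K)/\hat h(x)}$ (or twice this, according to whether $\tfrac12 x \in E(K)$), with $\hat h(y_K)$ computed to sufficient precision from the Gross-Zagier-Zhang formula (Theorem~\ref{gzz}). With $I_K$ determined I would split exactly as in the rank~$0$ proof: where $\overline\rho_{E,p}$ is surjective I apply Theorem~\ref{big-result}, and where it is irreducible but not surjective I apply Theorem~\ref{cha} (when $p \nmid 2\Delta(K)$ and $p^2 \nmid N$) or Theorem~\ref{stein-et-al}, in each case sharpening the bound by Jetchev's Theorem~\ref{jetchev}. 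Whenever $p$ divides at most one Tamagawa number, Jetchev's refinement yields $\ord_p(\#\Sha(\Q,E)) \leq 2\bigl(\ord_p(I_K) - \max_{q\mid N}\ord_p(c_q)\bigr) = \ord_p(\#\Sha(\Q,E)_\mathrm{an}) = 0$, which is exactly what is needed; the Heegner discriminants and indices would be tabulated as before, with $4$- and $6$-descents invoked to certify $\hat h(x)$ in the hardest cases.

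The main obstacle is the small set of pairs on which these generic tools break down. When a chosen $D$ fails the hypotheses of Theorems~\ref{cha} and~\ref{stein-et-al} (for instance $p \mid \Delta(K)$ while $\Delta(K)$ has a single prime factor), I would simply try another admissible discriminant; and when the point search needed to certify $\hat h(x)$ is of prohibitive height, the Stein-Wuthrich algorithm~\cite{shark} supplies the bound outright, as it does for $\text{1050c1}$ at $p = 7$. The genuinely delicate case is $(E,p) = (\text{1155k},7)$, where $p$ divides \emph{two} Tamagawa numbers ($c_3 = c_5 = 7$) so that Jetchev's sharpness criterion does not apply; this pair is handled separately in Proposition~\ref{1155k} through a direct computation with the $p$-adic $L$-function of $E/K$, using Kato's theorem and the work of Jones. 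Combining the surjective, irreducible-but-not-surjective, complex-multiplication, and exceptional cases then gives $\ord_p(\#\Sha(\Q,E)) = \ord_p(\#\Sha(\Q,E)_\mathrm{an})$ for every admissible pair, which is precisely $\BSD(E,p)$.
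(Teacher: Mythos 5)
Your proposal reproduces the paper's architecture almost exactly in the non-CM case: reduction to optimal curves and $p \geq 5$, Heegner indices computed from a Mordell--Weil generator via Corollary \ref{bounding_heeg_ind}(1) and Theorem \ref{gzz}, the bounds of Theorems \ref{big-result}, \ref{cha} and \ref{stein-et-al} sharpened by Theorem \ref{jetchev}, retrying with alternate discriminants (the paper's Table \ref{heeg_disc_rank_1_5_and_7}), the Stein--Wuthrich algorithm as a fallback, and Proposition \ref{1155k} for the pair $(\text{1155k},7)$ where $7$ divides two Tamagawa numbers. All of that matches the paper's proof of Theorem \ref{rank_1_irr}.

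There is, however, a genuine gap in your CM paragraph. You assert that the remaining CM pairs ``are exactly the curve-prime pairs whose mod-$p$ representations are proved surjective in Examples \ref{CM_rk1_5}, \ref{CM_rk1_7} and \ref{CM_rk1_11}, so that Theorem \ref{big-result} applies.'' This is false: the paper's count of remaining CM pairs is seventeen, and it includes the two pairs $(\text{225a1},5)$ and $(\text{3600be},5)$ of Example \ref{CM_rk1_5descent}, for which $E[5]$ is irreducible but \emph{not} surjective (the $5$-division polynomial has a degree-$4$ factor, giving a Galois-invariant spanning set of size $8$). For these two pairs every bounding theorem in your toolkit fails simultaneously: Theorem \ref{big-result} needs surjectivity of $\overline\rho_{E,p}$; Theorem \ref{cha} needs $p^2 \nmid N$, but $5^2$ divides both $225$ and $3600$; Theorem \ref{stein-et-al} requires $E$ non-CM; Theorem \ref{kato} is rank-zero only (and $5 \mid N$ anyway); and the Stein--Wuthrich algorithm excludes additive reduction at $p$, which both curves have at $5$. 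The paper closes these two pairs by a different idea entirely --- an explicit $5$-descent in the style of Schaefer--Stoll, exploiting the degree-$4$ factor to compute the relevant Selmer group inside the $S$-Selmer group of a degree-$8$ field $A_1$, yielding $\dim \Sel^{(5)}(\Q,E) \leq 2$ and hence $\Sha(\Q,E)[5] = 1$. Your proposal contains no mechanism that covers these two pairs, so as written the argument does not prove the theorem for them; you would need to add the direct $5$-descent step (or an equivalent) to complete the proof.
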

\begin{proof}
We may assume in addition that $E$ is optimal, since reducibility is isogeny-invariant. By Theorems \ref{p_2} and \ref{p_3}, if $p < 5$ then $\BSD(E, p)$ is true. Thus we may assume $p \geq 5$. Computing the Heegner index is much easier when $E$ has rank 1, as noted in Section \ref{heeg_pts}. Kolyvagin's theorem then rules out many pairs $(E,p)$ right away. Then some combination of Theorems \ref{stein-et-al}, \ref{cha}, \ref{jetchev} and the algorithm of Stein and Wuthrich \cite{shark} will rule out many more pairs. If no combination of these techniques works for the first Heegner index one computes, then another Heegner discriminant must be used. Table \ref{heeg_disc_rank_1_5_and_7} lists rank 1 curves $E$ for which this is necessary, such that $E[p]$ is irreducible, $E$ does not have complex multiplication and $(E,p) \neq (\text{1155k},7)$. All these curves have $E[p]$ surjective and $p$ does not divide any Tamagawa numbers so it is sufficient to demonstrate a Heegner index which $p$ does not divide. The case $(\text{1155k},7)$ is Proposition \ref{1155k}.

\begin{table}
\caption{Some Heegner indexes using larger discriminants}
\label{heeg_disc_rank_1_5_and_7}
\begin{center}  \begin{tabular}{|lrrr|lrrr|lrrr|} \hline
$E$ & $p$ & $D$ & $I_D$ & $E$ & $p$ & $D$ & $I_D$ & $E$ & $p$ & $D$ & $I_D$\\\hline
1450c1 & 5 & -151 & 3   & 3150i1 & 5 & -479 & 8    & 4440f1 & 5 & -259 & 2\\
1485e1 & 5 & -131 & 4   & 3150bb1 & 5 & -479 & 4 & 4485d1 & 5 & -296 & 2\\
1495a1 & 5 & -79 & 3     & 3310b1 & 5 & -151 & 3   & 4550j1 & 5 & -199 & 4\\
1735a1 & 5 & -24 & 4     & 3450b1 & 5 & -551 & 28 & 4675t1 & 5 & -84 & 9\\
2090c1 & 5 & -431 & 8   & 3480h1 & 5 & -239 & 2   & 4680h1 & 5 & -311 & 8\\
2145a1 & 5 & -131 & 2   & 3630h1 & 5 & -431 & 3   & 4725c1 & 5 & -104 & 8\\
2275b1 & 5 & -139 & 2   & 3760k1 & 5 & -39 & 1     & 4800bx1 & 5 & -119 & 7\\
2550n1 & 5 & -239 & 9   & 3900n1 & 5 & -599 & 2   & 4815e1 & 5 & -71 & 6\\
2860a1 & 5 & -519 & 9   & 3920y1 & 5 & -159 & 6   & 4950r1 & 5 & -359 & 6\\
2970j1 & 5 & -359 & 3    & 4050h1 & 5 & -239 & 32 &  &&&\\
2990e1 & 5 & -159 & 12 & 4140c1 & 5 & -359 & 6   & 2660a1 & 7 & -439 & 11\\
3060h1 & 5 & -359 & 18 & 4200t1 & 5 & -551 & 4    & 4158a1 & 7 & -215 & 2\\
3075a1 & 5 & -119 & 14 & 4400z1 & 5 & -79 & 24   & 4704t1 & 7 & -143 & 8\\
3140b1 & 5 & -39 & 2     & 4410i1 & 5 & -479 & 2    & 4914x1 & 7 & -335 & 12\\
\hline
\end{tabular}\end{center}
\end{table}

If $E$ has complex multiplication, there are 17 pairs $(E,p)$ left, namely the six pairs in Example \ref{CM_rk1_5}, the four in Example \ref{CM_rk1_7}, the five in Example \ref{CM_rk1_11}, and the two in Example \ref{CM_rk1_5descent}. Table \ref{CM_rank_1_heeg_discs} lists Heegner indexes which, together with Theorem \ref{big-result}, prove the fifteen cases not handled in Example \ref{CM_rk1_5descent}.
\end{proof}

\begin{table}[h]
\caption{Heegner indexes of some rank 1 curves with complex multiplication}
\label{CM_rank_1_heeg_discs}
\begin{center}
\begin{tabular}{|lrrr|lrrr|} \hline
$E$   &$p$& $D$ &$I_D$& $E$   &$p$& $D$ & $I_D$\\\hline
675a  & 5 & -11 &  2  & 3136v & 7 & -47 & 2 \\
900c  & 5 & -119&  6  & 3267d & 11& -8  & 2 \\
1568g & 7 & -31 &  2  & 3600bd& 5 & -71 & 12\\
2700h & 5 & -119&  3  & 3872a & 11& -7  & 2 \\
2700l & 5 & -119&  3  & 4356a & 11& -95 & 4 \\
2700p & 5 & -71 &  6  & 4356b & 11& -167& 6 \\
3136t & 7 & -55 &  2  & 4356c & 11& -95 & 2 \\
3136u & 7 & -31 &  4  & &&&\\
\hline
\end{tabular}
\end{center}
\end{table}

\section{Curves of conductor $N < 5000$, reducible mod-$p$ representations}\label{reducible}
Suppose $E$ is an optimal elliptic curve of conductor $N < 5000$ and $p$ is a prime such that $E[p]$ is reducible, i.e., there is a $p$-isogeny $\phi : E \rightarrow E^\prime$. If $p < 5$ or $E$ is a rank 0 curve with complex multiplication, results of the previous sections show that $\BSD(E, p)$ is true. This leaves 464 pairs $(E,p)$. By results in \cite{mazur-eisenstein}, $\BSD(\mathrm{11a}, 5)$ is true, leaving 463 pairs. The results of Theorem \ref{stein-et-al} can be applied to 339 of these curve-prime pairs, using Corollary \ref{bounding_heeg_ind} and various descents, including \cite{tom_fisher_6_12}. This leaves 124 pairs of the original 464: 103 5-isogenies, 16 7-isogenies, 2 11-isogenies, and one isogeny each of degree 19, 43 and 67. There are also two cases with rank 2, namely $(E,p) \in \{(\text{2601l}, 5), (\text{3328d}, 5)\}$. Of the 122 rank 0 and 1 cases remaining, 103 more at $p = 5$ and $p = 7$ are covered in \cite{tom-fisher-phd}.

Of the nineteen remaining cases, eight are proven in a paper by Michael Stoll and the author \cite{isogeny-descent}. The eleven remaining are listed in Table \ref{isogenies_left}: if $(E,p)$ does not appear in Table \ref{isogenies_left} for $E[p]$ reducible, then $\BSD(E, p)$ is true. For 5-isogenies involving 5-torsion in $\Sha$ (eight cases), one curve has trivial $\Sha$ in each case, and a full 5-descent on that curve involves number fields of degree at most 20. For 7-isogenies involving 7-torsion in $\Sha$ (three cases), similarly one curve has trivial $\Sha$ but a full 7-descent on that curve involves number fields of degree 28. Extending a 7-isogeny descent to a second descent and thus obtaining a full 7-descent would resolve these last three cases.

\begin{table}
\caption{Remaining curves: reducible representations}
\label{isogenies_left}
\begin{center}
\begin{tabular}{|lr|lr|}\hline
$E$  & $p$ & $E$   & $p$ \\\hline
546f &  7  & 1938j &  5 \\
570l &  5  & 1950y &  5 \\
858k &  7  & 2550be&  5 \\
870i &  5  & 2370m &  5 \\
1050o&  5  & 3270h &  5 \\
1230k&  7  & &\\
\hline
\end{tabular}
\end{center}
\end{table}

\bibliographystyle{amsplain}
\bibliography{/home/rlmill/Documents/math/Uber.bib}

\end{document}